\def\eea{\end{eqnarray*}}
\newtheorem{thm}{Theorem}[section]
\newtheorem{prop}[thm]{Proposition}
\newtheorem{lemma}[thm]{Lemma}
\newtheorem{remark}[thm]{Remark}
\begin{document}
\renewcommand{\theequation}{\thesection.\arabic{equation}}

\title[4-d static spaces and related spaces with harmonic curvature]{Four dimensional static and related critical spaces with harmonic curvature}

\author{Jongsu Kim and Jinwoo Shin}
\date{\today}

\address{Dept. of Mathematics, Sogang University, Seoul, Korea}
\email{jskim@sogang.ac.kr}

\address{Dept. of Mathematics, Sogang University, Seoul, Korea}
\email{shinjin@sogang.ac.kr}

\thanks{This work was supported by the National Research Foundation of Korea(NRF) grant funded by the Korea government(MOE), for the first author (NRF-2010-0011704)}

\keywords{static space, harmonic curvature, Codazzi tensor, critical point metric}

\subjclass[2010]{53C21, 53C25}

\begin{abstract}
In this article we study any 4-dimensional Riemannian manifold $(M,g)$ with harmonic curvature which admits a smooth nonzero solution $f$ to the following equation
\begin{eqnarray} \label{0002bx}
\nabla df = f(Rc -\frac{R}{n-1} g) + x Rc+ y(R) g.
\end{eqnarray}
where $Rc$ is the Ricci tensor of $g$, $x$ is a constant and $y(R)$ a function of the scalar curvature $R$. We show that
a neighborhood of any point in some open dense subset of $M$ is locally isometric to one of the following five types;
{\rm (i)} $ \mathbb{S}^2(\frac{R}{6}) \times   \mathbb{S}^2(\frac{R}{3})$ with $R>0$,
{\rm (ii)} $ \mathbb{H}^2(\frac{R}{6}) \times   \mathbb{H}^2(\frac{R}{3})  $ with $R<0$, where $\mathbb{S}^2(k) $ and  $\mathbb{H}^2(k) $ are the two-dimensional Riemannian manifold with constant sectional curvature $k>0$ and $k<0$, respectively,
{\rm (iii)}  the static spaces in Example 3 below,
{\rm (iv)}  conformally flat static spaces described in Kobayashi's \cite{Ko}, and
{\rm (v)} a Ricci flat metric.

We then get a number of Corollaries, including the classification of the following four dimensional spaces with harmonic curvature;  static spaces, Miao-Tam critical metrics and  $V$-static spaces.

The proof is based on the argument from a preceding study of gradient Ricci solitons \cite{Ki}. Some Codazzi-tensor properties of Ricci tensor, which come from the harmonicity of curvature, are effectively used.

\end{abstract}

\maketitle

\setcounter{section}{0}
\setcounter{equation}{0}

\section{Introduction}

In this article we consider an $n$-dimensional Riemannian manifold $(M, g)$ with constant scalar curvature $R$ which admits a smooth nonzero solution $f$ to the following equation
\begin{eqnarray} \label{0002bx}
\nabla df = f(Rc -\frac{R}{n-1} g) + x \cdot Rc+ y(R) g.
\end{eqnarray}
where $Rc$ is  the Ricci curvature of $g$, $x$ is a constant and $y(R)$ a function of $R$.
There are several well-known classes of spaces which admit such solutions. Below we describe them and briefly explain their geometric significances and recent develpoments.

\medskip
A {\it static space} admits by definition a smooth nonzero solution $f$ to
\begin{eqnarray} \label{0002bx1}
\nabla df = f(Rc -\frac{R}{n-1} g).
\end{eqnarray}

A Riemannian geometric interest of a static space comes from the fact that the scalar curvature functional ${\frak S}$, defined on the space ${\frak M}$ of smooth Riemannian metrics on a closed manifold, is  locally surjective at  $g \in {\frak M} $ if there is no nonzero smooth function satisfying  (\ref{0002bx1}); see Chapter 4 of \cite{Be}.

This interpretation also holds in local sense.
 Roughly speaking, if no nonzero smooth function on a compactly contained subdomain $\Omega$ of a smooth manifold satisfies  (\ref{0002bx1}) for a Riemannian metric $g$ on $\Omega$, then the scalar curvature functional defined on the space of
Riemannian metrics on $\Omega$ is locally surjective at  $g$ in a natural sense; see Theorem 1 of Corvino \cite{Co}. This local viewpoint has been developed to make remarkable progress in Riemannian and Lorentzian geometry \cite{CIP, Co, CEM, CS}.

Kobayashi \cite{Ko} studied a classification of conformally flat static spaces. In his study the list of {\it complete} ones is made. Moreover, all {\it local} ones are described for all varying parameter conditions and initial values of the static space equation. Indeed, they belong to the cases {\rm I}$\sim${\rm VI} in the section 2 of \cite{Ko} and the existence of solutions in each case is thoroughly discussed.
 Lafontaine \cite{La} independently proved a classification of closed conformally flat static space.
 Qing-Yuan \cite{QY} classified complete Bach-flat static spaces which contain compact level hypersurfaces.
 These spaces are all conformally flat in the case of dimension four.

\medskip
Next to static spaces we consider a Miao-Tam critical metric \cite{MT2, MT}, which is a compact Riemannian manifold $(M, g)$ that admits a smooth nonzero solution $f$, vanishing at the smooth boundary of $M$, to
\begin{eqnarray} \label{0002bx3}
\nabla df = f(Rc -\frac{R}{n-1} g)- \frac{g}{n-1} .
\end{eqnarray}

In \cite{MT}, Miao-Tam critical metrics are classified when they are Einstein or conformally flat. In \cite{BDR},  Barros, Di\'{o}genes and Ribeiro proved that if $(M^4, g, f )$ is a Bach-flat simply connected, compact Miao-Tam critical metric with boundary isometric to a standard sphere $S^3$, then $(M^4, g)$ is isometric to a geodesic ball in a simply connected space form $\mathbb{R}^4, \mathbb{H}^4$ or $\mathbb{S}^4$.

\medskip
In \cite{CEM} Corvino, Eichmair and Miao defined a $V$-{\it static space} to be a Riemannian manifold $(M, g)$ which admits a non-trivial solution $(f, c)$, for a constant $c$,  to the equation
\begin{eqnarray} \label{0002bx3f}
\nabla df = f(Rc -\frac{R}{n-1} g)- \frac{c }{n-1}g.
\end{eqnarray}

Note that $(M, g)$ is a $V$-static space if and only if it admits a solution $f$ to (\ref{0002bx1}) or (\ref{0002bx3}) on $M$, seen by scaling constants.
Under a natural assumption, a  $V$-static metric $g$ is a critical point of a geometric functional as explained in the theorem 2.3 of \cite{CEM}.
Like static spaces, {\it local} $V$-static spaces are still important; see e.g. theorems 1.1, 1.6 and 2.3 in \cite{CEM}.

\medskip
Lastly, one may consider Riemannian metrics $(M,g)$ which admit a non-constant solution $f$ to
\begin{eqnarray} \label{0002bx2}
\nabla df = f(Rc -\frac{R}{n-1} g) +  Rc-  \frac{R}{n}g.
\end{eqnarray}
 If $M$ is a closed manifold, then $g$ is  a critical point of the total scalar curvature functional defined on the space of Riemannian metrics with unit volume and with constant scalar curvature on $M$.
By an abuse of terminology we shall call a metric $g$ satisfying (\ref{0002bx2}) {\it a critical point metric} even when $M$ is not closed.
 There are a number of literatures on this subject, including \cite[Section 4.F]{Be} and \cite{La, HCY, BR, QY}.

\medskip

In this paper we study spaces with harmonic curvature having a non-zero solution to (\ref{0002bx}). It is confined to
 four dimensional spaces here, but our study may be extendible to higher dimensions.
 As motivated by the Corvino's local deformation theory of scalar curvature,
 we study local  (i.e. not necessarily complete) classification.
 We completely characterize non-conformally-flat spaces, so that  together with the Kobayashi's work on conformally flat ones we get a full classification as follows.

\begin{thm} \label{local}
Let $(M, g)$ be a four dimensional  (not necessarily complete) Riemannian manifold with harmonic curvature, satisfying (\ref{0002bx}) with non-constant $f$.
Then for each point $p$ in some open dense subset $\tilde{M}$ of $M$, there exists a neighborhood $V$ of $p$ with one of the following properties;

\medskip

{\rm (i)}
 $(V,g)$ is isometric to a domain in   $ \mathbb{S}^2(\frac{R}{6}) \times   \mathbb{S}^2(\frac{R}{3})$ with $R>0$, where $\mathbb{S}^2(k) $ is the two-dimensional sphere with constant sectional curvature $k>0$.
And  $ f=c_1 \cos( \sqrt{\frac{R}{6}}s )  -x$  for any constant $c_1$, where $s$ is the distance on $\mathbb{S}^2(\frac{R}{6})$ from a point.
 The constant $R$ equals the scalar curvature of $g$. It holds that $x \frac{R}{3}   + y(R)= 0.$

\smallskip
{\rm (ii)} $(V,g)$ is isometric to a domain in $ (\mathbb{H}^2(\frac{R}{6}) \times   \mathbb{H}^2(\frac{R}{3}),    g_{\frac{R}{6}}  +  g_{\frac{R}{3}} )$ with $R<0$,   where $\mathbb{H}^2(k) $ is the hyperbolic plane with constant sectional curvature $k<0$ and   $g_{k}$ is the Riemannian metric of constant curvature $k$.
$g_{\frac{R}{6}}$ can be written as  $g_{\frac{R}{6}} = ds^2 + p(s)^2 dt^2 $ where $p^{''}+ \frac{R}{6}p =0$ and then $f= c_2 p^{'}-x$ for any constant $c_2$. It holds that $x \frac{R}{3}   + y(R)= 0.$

\smallskip

{\rm (iii)} $(V,g)$ is isometric to a domain in one of the static spaces in Example 3 of Subsection 2.1.2 below, which is the Riemannian product $(\mathbb{R}^1  \times W^3   , dt^2  +  ds^2 + h(s)^2 \tilde{g})$ of $(\mathbb{R}^1, dt^2)$  and some 3-dimensional conformally flat static space $(W^3, ds^2 + h(s)^2 \tilde{g})$ with zero scalar curvature. And $f = c \cdot h^{'}(s)- x$, for any constant $c$. It holds that $R=0$ and $y(0)=0$.

\smallskip

{\rm (iv)} $(V,g)$ is conformally flat. It is one of the metrics whose existence is described  in the section 2 of \cite{Ko}; $g= ds^2 + h(s)^2 g_k$ where
$h$ is a solution of
 \begin{eqnarray} \label{handf5}
 h^{''} +   \frac{R}{12}h = ah^{-3}, \ \ \  {\rm for} \   {\rm a}   \ {\rm constant} \  a.
 \end{eqnarray}
 For the constant $k$, the function $h$ satisfies
\begin{eqnarray} \label{handf6}
(h^{'})^2 + ah^{-2} + \frac{R}{12}h^2 =k.
 \end{eqnarray}

 And $f$ is a non-constant solution to the ordinary differential equation for $f$;
\begin{eqnarray} \label{handf4xxx}
h^{'}f^{'} - f h^{''}= x (h^{''} +\frac{R}{3}h)+ y(R)h.
 \end{eqnarray}

 Conversely, any $(V,g,f)$ from {\rm (i)}$\sim${\rm (iv)} has harmonic curvature and satisfies (\ref{0002bx}).

\end{thm}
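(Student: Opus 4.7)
Harmonic curvature forces $R$ to be constant and makes $Rc$ a Codazzi tensor, i.e.\ $\nabla_i R_{jk} = \nabla_j R_{ik}$. Substituting $\tilde f := f + x$ into (\ref{0002bx}) absorbs the affine $x\,Rc$ term and converts the equation to
\[
 \nabla d\tilde f \;=\; \tilde f\bigl(Rc - \tfrac{R}{3}g\bigr) + \mu\, g, \qquad \mu := \tfrac{xR}{3}+y(R),
\]
which has the form of a $V$-static equation with constant inhomogeneous term $\mu g$; note that this substitution is consistent with each of the formulas $f = c\cdot(\text{something}) - x$ appearing in (i)--(iv). I would work on the open dense subset $\tilde M \subset M$ where $d\tilde f \neq 0$ and the multiplicities of the eigenvalues of $Rc$ are locally constant, and establish the classification in a neighborhood of each point of $\tilde M$.

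\textbf{Key integrability identity.} Applying $\nabla_k$ to the rewritten equation, antisymmetrizing in $(k,i)$, and using the Ricci commutation identity $\nabla_k\nabla_i\nabla_j\tilde f - \nabla_i\nabla_k\nabla_j\tilde f = R_{kijl}\,\nabla^l\tilde f$ gives
\[
 R_{kijl}\,\nabla^l\tilde f - \tilde f\,(\nabla_k R_{ij} - \nabla_i R_{kj}) \;=\; \nabla_k\tilde f\,(R_{ij}-\tfrac{R}{3}g_{ij}) - \nabla_i\tilde f\,(R_{kj}-\tfrac{R}{3}g_{kj}),
\]
and the Codazzi identity kills the second bracket on the left. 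Choosing an orthonormal frame $\{E_1 := \nabla\tilde f/|\nabla\tilde f|, E_2, E_3, E_4\}$ and taking $k=1,\,j=1,\,i=a\geq 2$ yields $R_{1a}=0$, so $\nabla\tilde f$ is a Ricci eigenvector with some eigenvalue $\lambda_1$; taking $k=a,\,i=1,\,j=b$ with $a,b\geq 2$ yields $R_{1a1b} = \tfrac{R}{3}\delta_{ab} - R_{ab}$, so the Ricci tensor on $E_1^\perp$ algebraically controls every sectional curvature along $E_1$.

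\textbf{Eigenvalue split and case analysis.} Because $\tilde f$ is a gradient, $E_1^\perp$ is integrable, and the equation forces $\nabla d\tilde f$ to be block-diagonal in the $(E_1, E_1^\perp)$-split; combined with the Codazzi property of $Rc$, a Derdzinski-type argument shows the eigenspaces of $Rc|_{E_1^\perp}$ are parallel when the corresponding eigenvalues differ, so $(V,g)$ locally de~Rham splits according to the multiplicity pattern of $(\lambda_1,\lambda_2,\lambda_3,\lambda_4)$. I would then run through the possible patterns: if $Rc$ has two double eigenvalues giving two $2$-plane Einstein factors, plugging back into the rewritten equation pins their scalar curvatures to $R/6$ and $R/3$, yielding (i) or (ii) according to sign, with $\tilde f$ given by the first Neumann eigenfunction on the appropriate factor; if $\lambda_1=0$ and $E_1^\perp$ carries a scalar-flat three-metric, the resulting $\mathbb R \times W^3$ product is (iii); if $\lambda_2=\lambda_3=\lambda_4$, the level sets of $\tilde f$ are totally umbilical of constant curvature, $g$ is the warped product $ds^2 + h(s)^2 g_k$, and the equation reduces to Kobayashi's ODEs (\ref{handf5})--(\ref{handf6}) for $h$ together with (\ref{handf4xxx}) for $f$, placing $(V,g)$ in (iv). The a priori remaining case of three distinct eigenvalues on $E_1^\perp$ is ruled out by feeding the key identity back into the Codazzi equation restricted to the $3$-plane.

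\textbf{Main obstacle and converse.} The hardest step is the last one: extracting Kobayashi's ODEs and checking that the warped metric is in fact conformally flat (i.e.\ that the Weyl tensor vanishes on the fiber), which is where harmonic curvature has to be invoked a second time. A secondary subtlety is that $\lambda_1$ may coincide with the other eigenvalues, so one must identify carefully on which de~Rham factor $\nabla\tilde f$ lies; the explicit formula for $f$ in each of (i)--(iv) is then read off by restricting the Hessian equation to that factor and solving the resulting second-order ODE in the single variable $s$. The converse direction in (i)--(iv) is a direct verification in the adapted frame, relying on the fact that products of Einstein surfaces, Ricci-flat products, and Kobayashi's conformally flat warped metrics all automatically satisfy $\nabla_i R_{jk} = \nabla_j R_{ik}$.
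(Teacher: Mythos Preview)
Your opening setup and the ``key integrability identity'' are correct and match the paper (Lemmas \ref{threesolx} and \ref{threesolbx}). The serious gap is the claim that a ``Derdzinski-type argument shows the eigenspaces of $Rc|_{E_1^\perp}$ are parallel when the corresponding eigenvalues differ, so $(V,g)$ locally de~Rham splits according to the multiplicity pattern.'' Derdzi\'nski's result for Codazzi tensors (Lemma~\ref{abc60x}(iv) here) only gives that the eigendistributions are integrable with \emph{totally umbilic} leaves, not totally geodesic ones; there is no de~Rham splitting in general. Indeed, the conformally flat spaces in case~(iv), with $\lambda_1\neq\lambda_2=\lambda_3=\lambda_4$, are genuine warped products $ds^2+h(s)^2 g_k$ and do not split. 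Likewise in the situation $\lambda_2\neq\lambda_3=\lambda_4$ the paper obtains a \emph{doubly warped} form $g=ds^2+p(s)^2dt^2+h(s)^2\tilde g$ (Lemma~\ref{claim112bx}), not a Riemannian product; only after solving the resulting ODEs (Lemma~\ref{v01}, Propositions~\ref{abR} and \ref{v02}) does one discover that either $h$ or $p$ must be constant, forcing the product structures of (i)--(iii). Your description of case~(iii) is also off: the flat $\mathbb R^1$ factor is \emph{not} the $E_1$-direction but the $E_2$-direction (i.e.\ $p$ constant), while $\nabla f$ points along the $s$-variable inside $W^3$.

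The second gap is the dismissal of the case of three distinct eigenvalues on $E_1^\perp$ as something one can rule out ``by feeding the key identity back into the Codazzi equation.'' In the paper this is the content of all of Section~4: one must first prove (Lemma~\ref{abc60byx}) that each $\lambda_i$ depends on $s$ alone, then compute the structure constants $\alpha,\beta,\gamma$ of the frame $\{E_2,E_3,E_4\}$, exploit the nontrivial relation (\ref{4dformb1x}) coming from the Jacobi identity, and finally compare two different expressions for $2\alpha\alpha'$ to conclude that the quantity $A$ in (\ref{fpr2x}) vanishes, forcing $f'=0$. None of this is visible from the integrability identity alone. In short, the architecture of your argument is right---adapted frame, eigenvalue case split, reduction to ODEs---but the passage from ``Codazzi $+$ Hessian equation'' to ``product/warped-product form'' requires substantially more work than a de~Rham splitting, and each of the three nontrivial multiplicity patterns needs its own analysis.
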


Theorem \ref{local} only considered the case when $f$ is a nonconstant solution, but the other case of $f$ being a nonzero constant solution is easier, which is described in Subsection 2.1.1.

\medskip
Theorem \ref{local} yields a number of classification theorems on four dimensional spaces with harmonic curvature as follows.
Theorem \ref{complete}
classifies complete spaces satisfying (\ref{0002bx}).
Then Theorem \ref{locals},  \ref{localsxx} and \ref{localc} state the classification of {\it local} static spaces, $V$-static spaces and critical point metrics, respectively. Theorem \ref{completes} and \ref{completec}  classify complete static spaces and  critical point metrics, respectively.  Theorem \ref{compbdry}  gives a characterization of some 4-d Miao-Tam critical metrics with harmonic curvature,   which is comparable to the afore-mentioned Bach-flat result \cite{BDR}.

\medskip
To prove theorem \ref{local} we look into the eigenvalues  of the Ricci tensor, which is a Codazzi tensor under harmonic curvature condition.
This Codazzi tensor encodes some geometric information as investigated by Derdzinsky \cite{De}. In \cite{Ki} the first-named author has analyzed it in the Ricci soliton setting. We follow the same line of arguments.
A crucial part is to show that all Ricci-eigenvalues $\lambda_i$, $i=1,\cdots ,4$ locally depend on only one function $s$ such that
$\nabla  s= \frac{\nabla f}{ | \nabla f | }$.
Then we divide the proof into some cases, depending on the distinctiveness of  $\lambda_2, \lambda_3, \lambda_4$.
 There are two non-trivial cases; when these three are pairwise distinct and when exactly two of them are equal. In the latter case we reduce the analysis to ordinary differential equations, in a way similar to that of \cite{CC2}, and resolve them. And computations on (\ref{0002bx}) using Codazzi tensor property show that the former case does not occur.

\medskip
This paper is organized as follows. In section 2, we discuss examples and some properties from (\ref{0002bx}) and harmonic curvature.
In section 3, we prove that all Ricci-eigenvalues locally depend on only one variable.
We study in section 4 the case when the three eigenvalues $\lambda_2, \lambda_3, \lambda_4$ are pairwise distinct.
In section 5 and 6 we analyze the case when exactly two of the three are equal.
 In section 7 we prove the local classification theorem as Theorem 1.
We discuss the classification of complete spaces in section 8.
In section 9, 10 and 11 we treat static spaces, Miao-Tam critical \& $V$-static spaces and crtitcal point metrics respectively.

\section{Examples and properties from (\ref{0002bx}) and harmonic curvature}

We are going to describe  some examples of spaces which satisfy (\ref{0002bx}) in Subsection 2.1 and
state basic properties of spaces with harmonic curvature satisfying (\ref{0002bx}) in Subsection 2.2.

\subsection{Examples of spaces  satisfying (\ref{0002bx})}
\subsubsection{Spaces with a nonzero constant solution  to (\ref{0002bx})}

When $(M, g)$ has  a constant solution  $f= -x$ to (\ref{0002bx}), then  $y(R)+x\frac{R}{n-1}=0$.  Conversely, any metric with its scalar curvature satisfying $y(R) +x\frac{R}{n-1}=0$
admits the constant solution $f=-x$ to (\ref{0002bx}) because   $\nabla df = f(Rc -\frac{R}{n-1} g) + x Rc+ y(R) g = (f+x) (Rc -\frac{R}{n-1} g).$ This proves

\begin{lemma} \label{c001}
An $n$-dimensional Riemannian manifold $(M, g)$ of constant scalar curvature $R$ admits the constant solution $f=-x$ if and only if it satisfies $y(R)+x\frac{R}{n-1}=0$.
\end{lemma}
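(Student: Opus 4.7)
The plan is a direct substitution. Since $f=-x$ is a constant, the left-hand side $\nabla df$ of equation (\ref{0002bx}) is identically zero, so the lemma reduces to determining when the right-hand side vanishes identically.

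First I would rearrange the right-hand side of (\ref{0002bx}) to isolate the factor $f+x$. Writing
\[
f\bigl(Rc - \tfrac{R}{n-1}g\bigr) + x\,Rc + y(R)g
= (f+x)\,Rc - f\tfrac{R}{n-1}g + y(R)g,
\]
and then substituting $f=-x$ collapses the $Rc$-term entirely, leaving the purely scalar expression
\[
\Bigl(x\tfrac{R}{n-1} + y(R)\Bigr)g.
\]
For the forward direction, assuming $f=-x$ is a solution, I equate this expression to $\nabla d(-x)=0$; since $g$ is nondegenerate, the scalar coefficient must vanish, giving $y(R)+x\tfrac{R}{n-1}=0$. (Note that $R$ being constant is needed so that $y(R)$ is a well-defined number rather than a function that could cancel against something else.)

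For the converse, assuming $y(R)+x\tfrac{R}{n-1}=0$, I set $f\equiv -x$ and verify directly: $\nabla df = 0$ trivially, and the right-hand side evaluates to $0\cdot Rc + (x\tfrac{R}{n-1}+y(R))g = 0$ by hypothesis, so (\ref{0002bx}) holds.

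There is no real obstacle here; the only thing to be careful about is the algebraic rearrangement that separates the $(f+x)Rc$ part from the scalar part, so that the argument does not require any information about the Ricci tensor (in particular, $g$ need not be Einstein). The whole proof is two lines.
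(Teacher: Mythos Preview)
Your proof is correct and is essentially the same direct substitution the paper uses: the paper rewrites the right-hand side as $(f+x)(Rc-\tfrac{R}{n-1}g)$ under the hypothesis $y(R)+x\tfrac{R}{n-1}=0$ and observes both sides vanish at $f=-x$, which is exactly your rearrangement read in the other order. There is nothing to add.
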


If $(M, g)$ has  a constant solution  $f= c_0$, which does not equal   $-x$, then $g$ is an Einstein metric.
Conversely, if $g$ is Einstein, i.e. $Rc = \frac{R}{n} g$ with $R \neq 0$, then any constant $c_0$ satisfying
$c_0 R = (n-1)xR + y(R)n(n-1)$ is a solution to  (\ref{0002bx}); but if $g$ is Ricci-flat, then $f= c_0$ is a solution exactly when
$y(0) =0$.

\medskip
\subsubsection{Some examples of spaces which satisfy (\ref{0002bx}) with non constant $f$}

\text{   }

\medskip
{\bf Example 1: Einstein spaces satisfying (\ref{0002bx}) with non constant $f$}

\medskip
Let $(M, g, f)$ be a 4-dimensional space satisfying (\ref{0002bx}) where $g$ is an Einstein metric. We shall show that $g$ has constant sectional curvature.
We may use the argument in the section 1 of Cheeger-Colding \cite{CC}. In fact,  the relation (1.6) of that paper corresponds to the equation
\begin{eqnarray} \label{cos07}
 \nabla df = \{- \frac{R }{12} f   + x \frac{R }{4}+ y(R) \}g
\end{eqnarray}
   in our Einstein case. One can readily see that their argument still works to get their (1.19);  in some neighborhood of any point in $M$ we can write $g=ds^2 + (f^{'}(s))^2 \tilde{g}$, where $s$ is a variable such that $\nabla s = \frac{ \nabla f }{|\nabla f|}$ and $ \tilde{g}$  is considered as a Riemannian metric on a level surface of $f$.

 As $g$ is Einstein, so is $ \tilde{g}$ from Derdzi\'{n}ski's Lemma 4  in \cite{De}.   As $ \tilde{g}$ is 3-dimensional,
 it has constant sectional curvature, say $k$.
And $f$ satisfies
$ f^{''} = - \frac{R }{12} f   + x \frac{R }{4}+ y(R)$, by feeding   $(\frac{\partial }{\partial s },\frac{\partial }{\partial s }  )$ to (\ref{cos07}).

Since $g$ is Einstein, we can readily see that our warped product metric $g$ has constant sectional curvature. In particular,
a 4-d complete positive Einstein space satisfying (\ref{0002bx}) with non constant $f$ is a round sphere; cf.  \cite{Ob, YN}.

\bigskip

{\bf Example 2}
Assume that $x \frac{R}{3}   + y(R)= 0.$  Then (\ref{0002bx}) reduces to   $\nabla d f = (f+x) (Rc -\frac{R}{n-1} g).$  This is the static space equation for $g$ and $F= f + x$.  We recall one example from \cite{La}.
On  the round sphere $ \mathbb{S}^2(1)$ of sectional curvature $1$,
we consider the local coordinates $(s, t) \in  (0,  \pi)  \times  \mathbb{S}^1$  so that the round metric is written $ds^2 +  {\sin^2 (s)} \ d t^2$. Let $f(s) = c_1 \cos s -x$  for any constant $c_1$.
Then the product metric of $ \mathbb{S}^2(1) \times  \mathbb{S}^2(2)$
with $f$ satisfies (\ref{0002bx}).

This example is not Einstein nor conformally flat.

\bigskip
{\bf Example  3}   Here we shall describe some 4-d non-conformally-flat static space $g_W + dt^2$.
We first recall some spaces  among
Kobayashi's warped product static spaces \cite{Ko} on  $I \times N(k)$ with  the metric $g = ds^2 + r(s)^2 \bar{g}$, where $I$ is an interval and $(\bar{g}, N(k))$ is a $(n-1)$-dimensional Riemannian manifold of constant sectional curvature $k$. And $f = c r^{'}$ for a nonzero constant $c$.

 In order for $g$ to be a static space, the next equation needs to be satisfied; for a constant $\alpha$
\begin{eqnarray} \label{m12}
r^{''}   + \frac{R}{n(n-1)}r=  \alpha r^{1-n},
\end{eqnarray}

\noindent along with an integrability condition; for a constant $k$,
\begin{eqnarray} \label{m13}
 (r^{'})^2 + \frac{2\alpha}{n-2}r^{2-n} + \frac{R}{n(n-1)}r^2 =k.
\end{eqnarray}
Existence of solutions depends on the values of $\alpha, R, k$.
Here we consider only when $R=0$.
Then there are three cases:

(i) $R=0, \ \alpha >0$,   $ \ \ \ \ \ \ $ (ii) $R=0, \ \alpha <0$,  $ \ \ \ \   \ \ \ \ $
(iii)  $R=0, \ \alpha =0$.

\medskip
The above (i), (ii) and (iii) correspond respectively to the case IV.1,  III.1 and II in Section 2 of \cite{Ko}.
The solutions for these cases are discussed in Proposition 2.5, Example 5 and Proposition 2.4 in that paper.
In particular, if $R=0, \alpha >0$ (then $k>0$) and $n=3$, we get the warped product metric on $\mathbb{R}^1 \times   \mathbb{S}^2(1) $ which contains the space section of the Schwarzshild space-time.
Next, if $R=0, \alpha <0$, then  there is an incomplete metric on $I \times N(k)$.
If $R=0, \alpha=0$,  then  $g$ is readily a flat metric.

 Let $(W^3, g_W, f)$ be one of the $3$-dimensional static spaces $(g, f)$ in the above paragraph.
We now consider the $4$-dimensional product metric $g_W + dt^2$ on $W^3   \times \mathbb{R}^1$. One can check that
$(W^3   \times \mathbb{R}^1, g_W + dt^2, f \circ {\rm pr}_1 )$ is a static space, where ${\rm pr}_1$ is the projection of $W^3   \times \mathbb{R}^1$ onto the first factor. The metric $g_W + dt^2$ is not conformally flat and  has three distinct $Ricci$ eigenvalues.

\subsection{Spaces with harmonic curvature}

We begin with a basic formula;

 \begin{lemma} \label{threesolx}
For a $4$-dimensional manifold $(M^4, g, f)$ with harmonic curvature satisfying {\rm (\ref{0002bx})}, it holds that
\begin{eqnarray*} \label{solbax}
-R(X,Y,Z, \nabla f) = - R(X,Z) g(\nabla f, Y)  + R(Y,Z)g(\nabla f, X)  \\
 - \frac{R}{3} \{ g(\nabla f, X)  g(Y,Z) - g(\nabla f, Y) g(X,Z) \}   .
\end{eqnarray*}
\end{lemma}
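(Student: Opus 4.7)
The plan is to differentiate (\ref{0002bx}) once more and commute the iterated covariant derivatives via the Ricci identity, using the two key consequences of harmonic curvature: the Ricci tensor is a Codazzi tensor and the scalar curvature $R$ is constant (the latter follows from the twice-contracted second Bianchi identity applied to $\delta\mathrm{Rm}=0$). Because $R$ is constant, $y(R)$ is a constant, so (\ref{0002bx}) can be rewritten in the compact form $\nabla_i \nabla_j f = (f+x)R_{ij} + \mu\, g_{ij}$ with $\mu := -\tfrac{R}{3}f + y(R)$ and $\nabla_k \mu = -\tfrac{R}{3}\nabla_k f$.

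Applying $\nabla_k$ to both sides then gives
\[ \nabla_k \nabla_i \nabla_j f = (\nabla_k f)\, R_{ij} + (f+x)\,\nabla_k R_{ij} - \tfrac{R}{3}(\nabla_k f)\, g_{ij}, \]
and antisymmetrizing in $k,i$ causes the middle term $(f+x)[\nabla_k R_{ij} - \nabla_i R_{kj}]$ to vanish by the Codazzi property of $Rc$. What remains on the right is exactly $(\nabla_k f) R_{ij} - (\nabla_i f) R_{kj} - \tfrac{R}{3}\{(\nabla_k f) g_{ij} - (\nabla_i f) g_{kj}\}$. On the left, the Ricci identity for the $1$-form $df$ reads $\nabla_k \nabla_i \nabla_j f - \nabla_i \nabla_k \nabla_j f = -R_{kij}{}^{\ell}\, \nabla_{\ell} f$, which under $X \leftrightarrow \partial_k$, $Y \leftrightarrow \partial_i$, $Z \leftrightarrow \partial_j$ becomes $-R(X,Y,Z,\nabla f)$. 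Equating the two sides produces the claimed identity.

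The argument is essentially a two-line computation with no genuine obstacle once the Codazzi property and the constancy of $R$ are in hand. The only subtlety worth flagging is the role of harmonic curvature at two distinct places: it kills the $(f+x)\nabla_k R_{ij}$ term upon antisymmetrization, and it prevents an extra $(\nabla_k y(R)) g_{ij}$ contribution from appearing in the derivative of $\mu$. Without either input the corresponding terms would survive and the clean Weyl-free formula of the lemma would fail.
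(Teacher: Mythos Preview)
Your proof is correct and follows essentially the same approach as the paper's: differentiate (\ref{0002bx}), antisymmetrize, and invoke the Ricci identity together with the Codazzi property of $Rc$ and the constancy of $R$. Your presentation is slightly more streamlined in that you package the lower-order terms into $\mu$ and make explicit why the constancy of $R$ (hence of $y(R)$) is needed, but the computation is the same.
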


\begin{proof}
By Ricci identity, $\nabla_i  \nabla_j  \nabla_k f   - \nabla_j  \nabla_i  \nabla_k f   = -R_{ijkl} \nabla_l f$.
The equation {\rm (\ref{0002bx})} gives
\begin{eqnarray*}
-R_{ijkl} \nabla_l f &=\nabla_i \{ f (R_{jk}  - \frac{1}{3} R g_{jk}) + x R_{jk}+ y(R) g_{jk} \}   \hspace{2cm} \\  & - \nabla_j \{ f (R_{ik}  - \frac{1}{3} R g_{ik}) + x R_{ik}+ y(R) g_{ik} \}    \\
&  =f_i (R_{jk}  - \frac{1}{3} R g_{jk})        - f_j (R_{ik}  - \frac{1}{3} R g_{ik}), \hspace{2cm}
\end{eqnarray*}
 which yields the lemma.
\end{proof}

A Riemannian manifold with harmonic curvature is real analytic in harmonic coordinates \cite{DG}.
The equation (\ref{0002bx}) then tells that $f$ is real  analytic in harmonic coordinates.

\medskip
One may mimic arguments in \cite{CC2} and get the next lemma.

 \begin{lemma} \label{threesolbx}
Let  $(M^n, g, f)$ have harmonic curvature, satisfying {\rm (\ref{0002bx})}  with nonconstant $f$. Let $c$ be a regular value of $f$ and $\Sigma_c= \{ x | f(x) =c  \}$  be the level surface of $f$. Then the followings hold;

{\rm (i)} Where $\nabla f \neq 0$,  $E_1 := \frac{\nabla f }{|\nabla f  | }$ is an eigenvector field of $Rc$.

{\rm (ii)} $ |\nabla f|$  is constant on a connected component of $\Sigma_c$.

{\rm (iii)} There is a function $s$ locally defined with   $s(x) = \int  \frac{   d f}{|\nabla f|} $, so that

$ \ \ \ \ ds =\frac{   d f}{|\nabla f|}$ and $E_1 = \nabla s$.

{\rm (iv)}  $R({E_1, E_1})$ is constant on a connected component of $\Sigma_c$.

{\rm (v)}  Near a point in $\Sigma_c$, the metric $g$ can be written as

$\ \ \ g= ds^2 +  \sum_{i,j > 1} g_{ij}(s, x_2, \cdots  x_n) dx_i \otimes dx_j$, where
    $x_2, \cdots  x_n$ is a local

 $ \ \ \ $   coordinates system on $\Sigma_c$.

{\rm (vi)}  $\nabla_{E_1} E_1=0$.
\end{lemma}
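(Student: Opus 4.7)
\noindent\textbf{Proof plan for Lemma \ref{threesolbx}.}
For (i), the plan is to contract the identity of Lemma \ref{threesolx} judiciously. Substituting $X=Z=\nabla f$ and taking $Y\perp\nabla f$, the terms involving $g(\nabla f,Y)$ drop out and the identity collapses to
\begin{equation*}
-R(\nabla f,Y,\nabla f,\nabla f)=|\nabla f|^{2}\bigl[\,R(Y,\nabla f)-\tfrac{R}{3}g(Y,\nabla f)\bigr].
\end{equation*}
The left-hand side vanishes by antisymmetry of the Riemann tensor in its last two slots, and $g(Y,\nabla f)=0$ by choice of $Y$, so $R(Y,\nabla f)=0$ for every $Y$ tangent to a regular level set. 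Hence $\mathrm{Rc}(\nabla f)$ is parallel to $\nabla f$, proving that $E_{1}$ is a Ricci eigenvector.

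For (ii), I would feed the identity (i) back into the defining equation (\ref{0002bx}). For $Y$ tangent to $\Sigma_{c}$, $Y(|\nabla f|^{2}) = 2(\nabla df)(Y,\nabla f)$; plugging in (\ref{0002bx}) the right-hand side becomes $2f\bigl(\mathrm{Rc}(Y,\nabla f)-\tfrac{R}{3}g(Y,\nabla f)\bigr)+2x\,\mathrm{Rc}(Y,\nabla f)+2y(R)g(Y,\nabla f)$, and every term vanishes by (i) and $Y\perp\nabla f$. Thus $|\nabla f|$ is constant on each connected component of $\Sigma_{c}$. Part (iii) is then a direct consequence: because $|\nabla f|$ depends only on the value $c=f$, the one-variable primitive $s(x)=\int df/|\nabla f|$ is well defined locally, its differential is $ds=df/|\nabla f|$, and the musical isomorphism gives $\nabla s=\nabla f/|\nabla f|=E_{1}$, a unit vector field.

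For (iv), I would evaluate (\ref{0002bx}) on the pair $(E_{1},E_{1})$. Using $\nabla f=|\nabla f|E_{1}$ and $g(\nabla_{E_{1}}E_{1},E_{1})=0$, the Hessian satisfies $(\nabla df)(E_{1},E_{1})=E_{1}(|\nabla f|)$, which is a function of $s$ alone by (ii)--(iii). Solving (\ref{0002bx}) for $\mathrm{Rc}(E_{1},E_{1})$ expresses it algebraically in terms of $f$, $|\nabla f|$ and $E_{1}(|\nabla f|)$; since each of these is constant along a connected component of $\Sigma_{c}$, so is $R(E_{1},E_{1})$ (on the dense open set where $f+x\neq 0$; the residual locus where $f\equiv -x$ can be handled directly, or absorbed because the statement is about regular level sets).

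For (v) and (vi), the ingredients are now standard. Since $|\nabla s|=1$, the integral curves of $E_{1}$ are unit-speed geodesics emanating orthogonally from $\Sigma_{c}$: concretely, for any $Y$, symmetry of $\nabla^{2}s$ and constancy of $|\nabla s|^{2}$ give
\begin{equation*}
g(\nabla_{E_{1}}E_{1},Y)=(\nabla^{2}s)(E_{1},Y)=(\nabla^{2}s)(Y,E_{1})=\tfrac{1}{2}Y(|\nabla s|^{2})=0,
\end{equation*}
which is (vi). Transporting any coordinate system $(x_{2},\dots,x_{n})$ on $\Sigma_{c}$ along these geodesics produces the Fermi-type coordinates in which $g=ds^{2}+g_{ij}(s,x_{2},\dots,x_{n})dx_{i}dx_{j}$, giving (v). The main non-routine step is (i)--(ii): making the right tensorial substitution into Lemma \ref{threesolx} so that the Codazzi content of harmonic curvature extracts the eigenvector property cleanly; once that is in hand, everything else follows by formal manipulation of (\ref{0002bx}) along the lines of \cite{CC2}.
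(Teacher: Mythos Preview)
Your proof is correct and follows essentially the same route as the paper: the key step (i) is obtained by the same contraction of Lemma~\ref{threesolx} (the paper sets $Y=Z=\nabla f$, $X\perp\nabla f$, you set $X=Z=\nabla f$, $Y\perp\nabla f$, which is an immaterial relabeling), and (ii)--(iv) proceed identically by feeding (i) into (\ref{0002bx}). The only noticeable difference is in (v)--(vi): the paper first writes the metric in semi-geodesic form using orthogonality of $\nabla f$ to the level sets and then reads off $\nabla_{E_1}E_1=0$ from the Christoffel symbols, whereas you reverse the order and get $\nabla_{E_1}E_1=0$ directly from $|\nabla s|\equiv 1$ via the Hessian symmetry argument---this is slightly cleaner but amounts to the same content.
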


\begin{proof} In Lemma \ref{threesolx},  put $Y=Z= \nabla f$ and $X \perp \nabla f$ to get

$0=-R(X,\nabla f,\nabla f, \nabla f) = - R(X,\nabla f) g(\nabla f, \nabla f)    $. So, $R(X,\nabla f)=0$. Hence $E_1 = \frac{\nabla f }{|\nabla f  | }$ is an eigenvector of $Rc$.
Also, $\frac{1}{2}\nabla_X |\nabla f|^2 =\langle \nabla_X \nabla f, \nabla f \rangle =   f R( \nabla  f, X )  = 0$  for $X \perp \nabla f$. We proved {\rm (ii)}.
Next $d (\frac{   d f}{|\nabla f|}) = -\frac{1}{2 |\nabla f|^{\frac{3}{2}}} d |\nabla f|^{2} \wedge df= 0  $ as $\nabla_X ( |\nabla f|^2 )=0$. So, {\rm (iii)} is proved.
 It holds that $(\nabla_{E_1}E_1) f=0$ because, setting $\nabla_{E_1}E_1( f)=\sum_i \langle \nabla_{E_1}E_1, E_i \rangle E_i (f) = \langle \nabla_{E_1}E_1, E_1 \rangle E_1 (f)$, then
$ 2\langle \nabla_{E_1}E_1, E_1 \rangle = E_1 \langle E_1, E_1 \rangle =0$.
We  compute
$\nabla_g df (E_1, E_1)= f (Ric_g  - \frac{1}{n-1} R_g g)(E_1, E_1)+xR(E_1, E_1) + y(R)$.  Then $ E_1 E_1 f - (\nabla_{E_1}E_1) f = E_1 E_1 f= f (R({E_1, E_1}) - \frac{1}{n-1} R)+xR(E_1, E_1) + y(R)$. Since $f$ is not zero on an open subset,  so $R({E_1, E_1})$ is constant on a connected component of $\Sigma_c$.
 As $\nabla f$ and the level surfaces of $f$ are perpendicular, one gets  {\rm (v)}.

One uses {\rm (v)} to compute Christoffel symbols and gets  {\rm (vi)}.
\end{proof}

The Ricci tensor $r$ of a Riemannian metric with harmonic curvature is a Codazzi tensor, written in local coordinates as $\nabla_k R_{ij}  = \nabla_i R_{kj}$.
Here $Rc$ or $r$ denotes the Ricci tensor, but its components in vector frames shall be written as $R_{ij}$.
Following Derdzi\'{n}ski  \cite{De}, for a point $x$ in $M$, let  $E_r(x)$ be the number of distinct eigenvalues of $r_x$,
and set $M_r = \{    x \in M \  |  \  E_r {\rm \ is \ constant \ in \ a \ neighborhood of \ } x \}$, so that $M_r$ is an open dense subset of $M$.
 Then we have;

\begin{lemma} \label{abc60x} For a Riemannian metric $g$ of dimension $n \geq 4$ with harmonic curvature, consider orthonormal vector fields $E_i$, $i=1, \cdots n$ such that
$R(E_i, \cdot ) = \lambda_i g(E_i, \cdot)$. Then the followings hold
in each connected component of $M_r$;

\bigskip
\noindent {\rm (i)}
 $(\lambda_j - \lambda_k ) \langle \nabla_{E_i} E_j, E_k \rangle   + {E_i} \{R(E_j, E_k)   \}=(\lambda_i - \lambda_k ) \langle \nabla_{E_j} E_i, E_k\rangle + {E_j} \{R(E_k, E_i)   \}, \ \ $

 for any $i,j,k =1, \cdots n$.

\smallskip
\noindent {\rm (ii)}  If $k \neq i$ and $k \neq j$,
$ \ \ (\lambda_j - \lambda_k ) \langle \nabla_{E_i} E_j, E_k\rangle=(\lambda_i - \lambda_k ) \langle \nabla_{E_j} E_i, E_k\rangle .$

\smallskip
\noindent {\rm (iii)} Given distinct eigenfunctions $\lambda, \mu$ of $A$ and local vector fields $v, u$ such that  $A v = \lambda v$, $Au = \mu u$ with $|u|=1$, it holds that

$ \ \ \ \ \  v(\mu) = (\mu - \lambda) <\nabla_u u, v > $.

\smallskip
\noindent {\rm (iv)} For each eigenfunction $\lambda$, the $\lambda$-eigenspace distribution is integrable and its leaves are totally umbilic submanifolds of $M$.

\end{lemma}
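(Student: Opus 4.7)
The plan is to derive everything in Lemma~\ref{abc60x} from the Codazzi identity $(\nabla_X r)(Y,Z) = (\nabla_Y r)(X,Z)$, which holds because the harmonic curvature condition is equivalent to $r$ being a Codazzi tensor. Inside a connected component of $M_r$ the number of distinct Ricci eigenvalues is locally constant, so smooth orthonormal eigenvector fields $E_i$ with smooth eigenfunctions $\lambda_i$ exist locally, which is exactly what the statement postulates. I would then expand the Codazzi identity in this frame, case by case.

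For (i), I would compute
\[
(\nabla_{E_i} r)(E_j,E_k) = E_i\{R(E_j,E_k)\} - r(\nabla_{E_i}E_j,E_k) - r(E_j,\nabla_{E_i}E_k).
\]
Since $r(X,E_k)=\lambda_k\langle X,E_k\rangle$ by self-adjointness, and $\langle\nabla_{E_i}E_j,E_k\rangle + \langle E_j,\nabla_{E_i}E_k\rangle = 0$ from $\langle E_j,E_k\rangle$ being constant, the two connection terms collapse to $(\lambda_j-\lambda_k)\langle\nabla_{E_i}E_j,E_k\rangle$. Swapping $i\leftrightarrow j$ and invoking Codazzi produces (i). Part (ii) is the specialization to $k\neq i$, $k\neq j$: there $R(E_j,E_k)$ and $R(E_k,E_i)$ vanish identically, so the derivative terms in (i) drop out.

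For (iii), I would rewrite Codazzi as $(\nabla_v A)(u)=(\nabla_u A)(v)$ for the Ricci endomorphism $A$, then pair both sides with $u$. Using $Au=\mu u$, $Av=\lambda v$, self-adjointness of $A$, and $\langle u,v\rangle = 0$ (forced by $\mu\neq\lambda$), the left pairing reduces to $v(\mu)|u|^2 = v(\mu)$, while the right pairing becomes $(\lambda-\mu)\langle\nabla_u v,u\rangle = -(\lambda-\mu)\langle v,\nabla_u u\rangle = (\mu-\lambda)\langle\nabla_u u,v\rangle$, which is (iii).

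For (iv), integrability and umbilicity of each $\lambda$-eigenspace distribution fall out of the preceding parts. Integrability: for $E_1,E_2$ in the $\lambda$-eigenspace and $E_j$ with eigenvalue $\lambda_j\neq\lambda$, part (ii) forces $\langle\nabla_{E_1}E_2,E_j\rangle = \langle\nabla_{E_2}E_1,E_j\rangle$, so $\langle[E_1,E_2],E_j\rangle=0$ and Frobenius closes the distribution. Umbilicity: for the second fundamental form $h(\cdot,\cdot)=\langle\nabla_\cdot\cdot,E_j\rangle$ in a normal direction $E_j$, I need $h=\alpha\,g|_{L}$ on a leaf $L$. For distinct $E_1,E_2$ in the $\lambda$-eigenspace, the Codazzi equation $(\nabla_{E_1}r)(E_j,E_2)=(\nabla_{E_j}r)(E_1,E_2)$ expands, using $R(E_1,E_2)=R(E_j,E_2)=0$, to $(\lambda-\lambda_j)\langle\nabla_{E_1}E_2,E_j\rangle=0$, killing the off-diagonal entries of $h$. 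For diagonal entries, part (iii) applied with unit $u=E_a$ in the $\lambda$-eigenspace and $v=E_j$ gives $\langle\nabla_{E_a}E_a,E_j\rangle = E_j(\lambda)/(\lambda-\lambda_j)$, a quantity independent of which unit $E_a$ we pick inside the $\lambda$-eigenspace. Hence $h$ is a multiple of the induced metric on $L$, proving total umbilicity. The only real obstacle here is careful sign and index bookkeeping in the expansions; the underlying geometric content is the classical Codazzi-tensor framework of \cite{De}.
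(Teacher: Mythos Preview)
Your proof is correct and follows exactly the Codazzi-tensor expansion that underlies the paper's argument; the paper simply defers (i) to \cite{Ki} and (iii)--(iv) to Derdzi\'{n}ski \cite{De}, while you have written out those computations in full. The content and method are the same.
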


\begin{proof}
 The statement {\rm (i)} was proved in \cite{Ki}.  And {\rm (ii)} and {\rm (iii)} follow from {\rm (i)}.
{\rm (iii)} and {\rm (iv)}  are from the section 2 of \cite{De}.
\end{proof}

\bigskip
  Given $(M^n,g,f)$ with harmonic curvature satisfying (\ref{0002bx}), $f$ is real  analytic in harmonic coordinates, so $\{ \nabla f \neq 0  \}$ is open and dense in $M$.
 Lemma \ref{threesolbx} gives that
for any point $p$ in the open dense subset $M_{r} \cap \{ \nabla f \neq 0  \}$ of $M^n$,
there is a neighborhood $U$ of $p$ where there exists an orthonormal Ricci-eigen vector fields $E_i$, $i=1, \cdots  , n$  such  that

{\rm (i)}  $E_1= \frac{\nabla f}{|\nabla f| }$,

 {\rm (ii)} for $i>1$, $E_i$ is tangent to smooth level hypersurfaces of $f$.

\medskip
These local orthonormal Ricci-eigen vector fields $\{ E_i \}$  shall be called an {\it adapted frame field} of $(M, g, f)$.

\section{Constancy of $\lambda_i$ on level hypersurfaces of $f$}

For an adapted frame field of $(M^n,g,f)$ with harmonic curvature satisfying (\ref{0002bx}), we set $ \zeta_i:= - \langle   \nabla_{E_i}  E_i ,  E_1  \rangle=  \langle    E_i , \nabla_{E_i}  E_1  \rangle$, for $i >1$. Then
$\nabla_{E_i}  E_1 = \nabla_{E_i} (\frac{\nabla f}{  | \nabla f |}) =   \frac{ \nabla_{E_i} \nabla f }{  | \nabla f |}=   \frac{ f R({E_i}, \cdot) -  f\frac{R}{n-1} g( {E_i}, \cdot  ) +xR({E_i}, \cdot ) + y(R)g({E_i}, \cdot ) }{  | \nabla f |} $. So we may write;
\begin{equation} \label{lambda06ax}
\nabla_{E_i}  E_1 =   \zeta_i E_i     \ \    {\rm where}   \   \zeta_i =     \frac{(f+x) R(E_{i}, E_i)  - \frac{R}{n-1}f + y(R) }{| \nabla f|}.
\end{equation}

Due to Lemma \ref{threesolbx}, in a neighborhood of a point $p \in M_{r} \cap \{ \nabla f \neq 0  \}$, $f$  may be considered as functions of the variable $s$ only, and we write the derivative in $s$ by a prime: $f^{'} = \frac{df}{ds}$.

\begin{lemma} \label{abc60byx} Let $(M, g, f)$ be a 4-dimensional space  with harmonic curvature, satisfying (\ref{0002bx}) with nonconstant $f$.
The Ricci eigen-functions $\lambda_i$ associated to an adapted frame field $E_i$
are constant on a connected component of a regular level hypersurface $\Sigma_c$ of $f$, and so depend on the local variable  $s$ only. And $\zeta_i$, $i=2,3,4$, in  $\rm{(\ref{lambda06ax})}$ also depend on $s$ only.
 In particular, we have
$E_i (\lambda_j) = E_i (\zeta_k)= 0$ for $i,k >1$ and any $j$.
\end{lemma}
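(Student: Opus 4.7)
The plan is to reduce the lemma to showing $E_i(\lambda_j)=0$ for $i,j>1$ on the connected component of $\Sigma_c$ containing the point. Lemma \ref{threesolbx}(iv) already gives $E_i(\lambda_1)=0$, and since $f$, $|\nabla f|$ and $R$ depend only on $s$, equation \eqref{lambda06ax} forces
\[
E_i(\zeta_j)=\frac{f+x}{|\nabla f|}\,E_i(\lambda_j)\qquad(i>1),
\]
so the vanishings for the Ricci eigenvalues and for the shape-operator eigenvalues stand or fall together.

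The first main ingredient is the Codazzi property of $r$ coming from harmonic curvature. Expanding $(\nabla_{E_i}r)(E_j,E_j)=(\nabla_{E_j}r)(E_i,E_j)$ in the adapted frame produces
\[
E_i(\lambda_j)=(\lambda_j-\lambda_i)\,A_{ij},\qquad A_{ij}:=\langle E_i,\nabla_{E_j}E_j\rangle,\ \ i\neq j,\ i,j>1,
\]
so it suffices to show $A_{ij}=0$ whenever $\lambda_i\neq\lambda_j$. The parallel Codazzi computation for the shape operator of $\Sigma_c$, valid because Lemma \ref{threesolx} yields $R(E_a,E_b,E_c,E_1)=0$ for all $a,b,c>1$, gives the analogous $E_i(\zeta_j)=(\zeta_j-\zeta_i)A_{ij}$.

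The second main ingredient, in the spirit of \cite{CC2}, is the Bochner formula applied to $|\nabla f|^2$. Using $|\nabla f|^2=(f')^2$, $\nabla^2 f(E_1,E_1)=f''$, $\nabla^2 f(E_j,E_j)=f'\zeta_j$ for $j>1$, and $\Delta f=f''+f'H$ with $H=\sum_{j>1}\zeta_j$, a direct computation of both sides of
\[
\tfrac12\Delta|\nabla f|^2=|\nabla^2 f|^2+\langle\nabla\Delta f,\nabla f\rangle+r(\nabla f,\nabla f)
\]
simplifies to the identity $|A|^2=-H'-\lambda_1$, so that $|A|^2=\sum_{j>1}\zeta_j^2$ depends only on $s$. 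Differentiating along $E_i$ using the shape-operator Codazzi then gives
\[
0=E_i(|A|^2)=2\sum_{k\neq i,\,k>1}(\zeta_i-\zeta_k)^2A_{ik}\qquad(i>1).
\]

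A case split on the multiplicity of $(\lambda_2,\lambda_3,\lambda_4)$ finishes the argument. When all three coincide the vanishing is immediate from the Codazzi identity. When exactly two coincide, say $\lambda_2=\lambda_3\neq\lambda_4$, Derdzinski's umbilicity (Lemma \ref{abc60x}(iv)) forces $A_{42}=A_{43}$, so the $i=4$ equation reduces to $2(\zeta_4-\zeta_2)^2A_{42}=0$, giving $A_{42}=A_{43}=0$; the $i=2,3$ equations then yield $A_{24}=A_{34}=0$. The pairwise-distinct subcase is the main obstacle: the three equations from $E_i(|A|^2)=0$ involve six unknowns $A_{ij}$, so one additional independent constraint is needed---expected to come either from a further Bochner/divergence
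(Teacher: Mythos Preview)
Your argument is correct up through the Bochner step, and your identity
$E_i(|A|^2)=2\sum_{k\neq i}(\zeta_i-\zeta_k)^2A_{ik}$
is valid once one uses $E_i(H)=0$ to handle the diagonal term $E_i(\zeta_i)$. The case of two equal eigenvalues is also handled correctly via umbilicity. But the pairwise-distinct case is a genuine gap, as you yourself flag: three homogeneous equations in the six unknowns $A_{ij}$ are not enough, and nothing in your setup supplies the missing relation. There is no reason to expect a further geometric identity to kill the $A_{ij}$ directly in this regime.

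The paper avoids the case split altogether by a different mechanism: instead of trying to prove $A_{ij}=0$, it shows that the first three \emph{power sums} of the Ricci eigenvalues are functions of $s$ only, and then invokes Newton's identities. Your Bochner computation is equivalent to the paper's first step, $\tfrac12\Delta|\nabla f|^2$, which yields that $\sum_i\lambda_i^2$ depends only on $s$ (this is the same information as your $|A|^2=-H'-\lambda_1$). The additional constraint you are looking for is obtained by expanding $\nabla_k(f_if_{ij}R_{jk})$ in two ways: once directly from (\ref{0002bx}) and harmonic curvature, producing $(f+x)^2\sum R_{ij}R_{jk}R_{ki}$ plus terms already known to depend on $s$; and once as $\tfrac12 R_{jk}\nabla_k\nabla_j|\nabla f|^2$, which is visibly a function of $s$. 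This forces $\sum_i\lambda_i^3$ to depend only on $s$. Since $\lambda_1$, $\sum\lambda_i=R$, $\sum\lambda_i^2$ and $\sum\lambda_i^3$ are now all functions of $s$, so is each $\lambda_i$. This closes the argument uniformly with no eigenvalue-multiplicity split; your approach, by contrast, would need exactly this cubic identity anyway to finish the distinct case, so the case analysis buys nothing.
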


\begin{proof}
We use  $f_{ij} = f (R_{ij}  - \frac{1}{3} R g_{ij}) + x R_{ij}+ y(R) g_{ij}$ to compute;

$ \sum_{j=1}^4 \frac{1}{2} \nabla_{E_j} \nabla_{E_j}  ( |\nabla f|^2  ) = \sum_{i,j} \frac{1}{2} \nabla_{E_j} \nabla_{E_j}  (f_i f_i  )$

$\hspace{0.9cm} =\sum_{i,j}   \nabla_{E_j}  (f_i f_{ij}  )  =\sum_{i,j}  \nabla_{E_j}  \{ f f_i (R_{ij} -\frac{R}{3}g_{ij}) + xf_iR_{ij}+ y(R)f_ig_{ij} \} $

$ \hspace{0.9cm} = \sum_{i,j} f_j f_i (R_{ij} -\frac{R}{3}g_{ij}) +  f f_{ij} (R_{ij} -\frac{R}{3}g_{ij})+  xf_{ij}R_{ij} + y(R)f_{ij}g_{ij}$

$\hspace{0.9cm} = (R_{11} -  \frac{R}{3}) |\nabla f|^2  +     \sum_{i,j}  (f+x)^2R_{ij} R_{ij}-  \frac{2R^2}{9}f^2   -\frac{2xR^2}{3} f $

$  \hspace{1.4cm}+(2x- \frac{2f}{3})y(R) R       + 4y(R)^2$.

\noindent
Meanwhile,

$ \sum_{j=1}^4\nabla_{E_j} \nabla_{E_j}  ( |\nabla f|^2  ) = \sum_{j=1}^4 {E_j} {E_j}( |\nabla f|^2  )  -  (\nabla_{E_j} E_j)  ( |\nabla f|^2  )$

$ \hspace{3.7cm} =( |\nabla f|^2  )^{''} +  \sum_{j=2}^4 \zeta_j  ( |\nabla f|^2  )^{'} $.

Since
 $R$ and $\lambda_1= R_{11}$ depend on $s$ only  by Lemma  \ref{threesolbx}, the function $\sum_{j=2}^4 \zeta_j $  depends only on $s$ by (\ref{lambda06ax}).
We compare the above two expressions of $\sum_{j=1}^4\nabla_{E_j} \nabla_{E_j}  ( |\nabla f|^2  )$ to see that
 $\sum_{i,j}  (f+x)^2R_{ij} R_{ij}$ depends only on $s$.
As $f$ is nonconstant real analytic,  $\sum_{i,j} R_{ij}R_{ij}$  depends only on $s$.

\medskip
Below we drop summation symbols.
\begin{eqnarray*}
 \nabla_k (f_i f_{ij} R_{jk})=   \nabla_k \{f_iR_{jk} (f (R_{ij}  - \frac{1}{3} R g_{ij}) + x R_{ij}+ y(R) g_{ij}) \} \hspace{2cm}\nonumber \\
 =   \nabla_k \{f_i ((f+x) R_{ij}R_{jk}  - \frac{f}{3} R R_{ik} + y(R) R_{ik}) \} \hspace{4.3cm}\nonumber \\
=   f_{ik} \{(f+x) R_{ij}R_{jk}  - \frac{f}{3} R R_{ik} + y(R) R_{ik}\}  \hspace{5cm} \nonumber \\
  +  f_i (f_k R_{ij}R_{jk}+(f+x) R_{jk}\nabla_k R_{ij}  - \frac{f_k}{3} R R_{ik})  \hspace{4cm} \nonumber \\
  =   \{ (f+x) R_{ik}  - \frac{f}{3} R g_{ik} + y(R) g_{ik}   \} \{(f+x) R_{ij}R_{jk}  - \frac{f}{3} R R_{ik} + y(R) R_{ik}\}  \nonumber \\
  +   f_if_k R_{ij}R_{jk}+(f+x) f_iR_{jk}\nabla_k R_{ij}  - \frac{f_if_k}{3} R R_{ik} \hspace{3.5cm}  \nonumber \\
    =  (f+x)^2 R_{ik} R_{ij}R_{jk} +(f+x) f_iR_{jk}\nabla_k R_{ij}    + L(s),  \hspace{3.7cm}  \nonumber
\end{eqnarray*}
where $L(s)$ is a function of $s$ only.

\medskip
Using $\nabla_k R_{ij}=\nabla_i R_{jk}$, we get
\begin{eqnarray} \label{rrt3}
\ \ \ \ \ \ \ \ \ \ \  \nabla_k (f_i f_{ij} R_{jk})  =  (f+x)^2 R_{ik} R_{ij}R_{jk} +\frac{(f+x)}{2} f_i\nabla_i (R_{jk} R_{jk})    + L(s).
\end{eqnarray}

All terms except $(f+x)^2 R_{ij}R_{jk}R_{ik} $ in the right hand side of (\ref{rrt3}) depend on $s$ only.
From the constancy of $R$ and (\ref{lambda06ax}) we also get
\begin{eqnarray} \label{rrtb3}
2\nabla_k (f_i f_{ij} R_{jk})& =\nabla_k (2 f_i f_{ij}) \cdot R_{jk}
 =\nabla_k  \nabla_j (f_i f_{i}) \cdot R_{jk}
 \hspace{3.9cm} \nonumber \\
  &= \sum_{j, k,i } E_k  E_j (f_i f_{i}) \cdot R_{jk}- (\nabla_{E_k} E_j )(f_i f_{i}) \cdot R_{jk}
 \hspace{2.3cm} \nonumber \\
   &= \sum_{j, i } E_j  E_j (f_i f_{i}) \cdot R_{jj}- (\nabla_{E_j} E_j )(f_i f_{i}) \cdot R_{jj}
 \hspace{2.5cm} \nonumber \\
   & = \sum_{ i } E_1  E_1 (f_i f_{i}) \cdot R_{11}+ \sum_{j=2}^4 \zeta_j E_1(f_i f_{i}) \cdot  R_{jj}
 \hspace{2.3cm} \nonumber\\
   & =  (|\nabla f |^2)^{''} \cdot R_{11}+ \sum_{j=2}^4 \frac{(f+x)R_{jj}  R_{jj} - \frac{R}{3}f R_{jj} + y(R)R_{jj}}{| \nabla f|}   E_1(|\nabla f |^2)
\end{eqnarray}
which depends only on $s$.

So, we compare  (\ref{rrt3}) with (\ref{rrtb3}) to see that
 $R_{ij}R_{jk}R_{ik}$ depends only on $s$.
  Now $\lambda_1$ and  $\sum_{i=1}^4 (\lambda_i)^k$, $k=1, \cdots ,3$,  depend only on $s$.
This implies that
each $\lambda_i$, $i=1, \cdots ,4$, depends only on $s$.
By (\ref{lambda06ax}), $\zeta_i$, $i=2, 3 ,4$ depend on $s$ only.

\end{proof}

\section{4-dimensional space with distinct  $\lambda_2, \lambda_3, \lambda_4$}

Let $(M,g,f)$ be a four dimensional Riemannian manifold with harmonic curvature satisfying (\ref{0002bx}).
For an adapted frame field $\{ E_j \} $ with its eigenfunction $\lambda_j$ in an open subset of $M_{r} \cap \{ \nabla f \neq 0  \}$,
we may only consider three cases depending on the distinctiveness of $\lambda_2,\lambda_3,\lambda_4$;
the first case is when  $ \lambda_i$, $i=2,3,4$ are all equal (on an open subset),
 and  the second  is when exactly two of the three are equal. And the last is when the three  $ \lambda_i$, $i=2,3,4$, are mutually different.
In this section we shall study the last case. We set $\Gamma^k_{ij}:= < \nabla_{E_i} E_j,  E_k>$.

\begin{lemma}\label{77bx}
Let $(M,g,f)$ be a four dimensional Riemannian manifold with harmonic curvature satisfying (\ref{0002bx}) with nonconstant $f$.
Suppose that for an adapted frame fields $E_j$, $j=1,2,3,4$,
in an open subset $W$ of $M_{r} \cap \{ \nabla f \neq 0  \}$,   the eigenfunctions $\lambda_2, \lambda_3, \lambda_4$ are distinct from each other. Then the following holds in $W$;

\medskip
\noindent For distinct $\ i, j>1$,
$ \ \ R_{1ii1} =-\zeta_i^{'}  -  \zeta_i^2 , $ $\ \ \ \    R_{1ij1}= 0$.

$R_{11} =  -\zeta_2^{'}
 -  \zeta_2^2   -\zeta_3^{'}
 -  \zeta_3^2  -\zeta_4^{'}
 -  \zeta_4^2 .$

$R_{22} = - \zeta_2^{'}
 -  \zeta_2^2 -\zeta_2 \zeta_3
 -\zeta_2 \zeta_4  -2 \Gamma_{34}^2 \Gamma_{43}^2 .$

$R_{33} = -\zeta_3^{'}
 -  \zeta_3^2 -\zeta_3 \zeta_2
 -\zeta_3 \zeta_4  +2 \frac{( \zeta_2 - \zeta_4 )}{ \zeta_3 - \zeta_4   }\Gamma_{34}^2 \Gamma_{43}^2 .$

$R_{44} = -\zeta_4^{'}
 -  \zeta_4^2 -\zeta_4 \zeta_2
 -\zeta_4 \zeta_3  +2\frac{( \zeta_2 - \zeta_3 )}{ \zeta_4 - \zeta_3   } \Gamma_{34}^2 \Gamma_{43}^2 .$

$  R_{1j1j} =  R_{jj} - \frac{R}{3}  $.

\end{lemma}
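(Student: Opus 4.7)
The plan is to compute every curvature quantity in the adapted frame by combining the structure equations $\nabla_{E_1}E_1=0$, $\nabla_{E_i}E_1=\zeta_iE_i$, the Codazzi consequences from Lemma 3.1, and Lemma 2.3. First I will pin down which Christoffel symbols $\Gamma_{ij}^k := \langle\nabla_{E_i}E_j,E_k\rangle$ survive. From orthonormality and $\nabla_{E_i}E_1=\zeta_iE_i$ one gets $\Gamma_{i1}^j = \zeta_i\delta_{ij}$, $\Gamma_{jj}^1 = -\zeta_j$, and $\Gamma_{ij}^j = 0$. Lemma 3.1(iii), together with the constancy $E_j(\lambda_k)=0$ from Lemma 3.2, yields $\Gamma_{kk}^j = 0$ for distinct $j,k>1$, so $\nabla_{E_j}E_j = -\zeta_j E_1$. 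Applying Lemma 3.1(ii) to the triple $(1,i,j)$ with distinct $i,j>1$ gives $(\lambda_i-\lambda_j)\Gamma_{1i}^j = (\lambda_1-\lambda_j)\Gamma_{i1}^j = 0$, forcing $\Gamma_{1i}^j = 0$; combined with the other vanishings this shows $\nabla_{E_1}E_i = 0$. Hence the only possibly nonzero off-diagonal symbols are those whose three indices form a permutation of $\{2,3,4\}$. Set $a := \Gamma_{34}^2$, $b := \Gamma_{43}^2$, $c := \Gamma_{23}^4$; antisymmetry $\Gamma_{pq}^r = -\Gamma_{pr}^q$ fixes the rest, and Lemma 3.1(ii) applied to $(2,3,4)$ gives the key relation $c = -a(\lambda_2-\lambda_4)/(\lambda_3-\lambda_4)$, which on the dense subset $\{f+x\neq 0\}$ equals $-a(\zeta_2-\zeta_4)/(\zeta_3-\zeta_4)$ via the formula $\zeta_i-\zeta_j = (f+x)(\lambda_i-\lambda_j)/|\nabla f|$.

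The curvature components involving $E_1$ follow from a Riccati-style calculation. Since $[E_1,E_i] = -\zeta_iE_i$ and $\nabla_{E_1}E_1=\nabla_{E_1}E_i=0$, the expansion $R(E_1,E_i)E_1 = \nabla_{E_1}\nabla_{E_i}E_1 - \nabla_{[E_1,E_i]}E_1$ reduces to $(\zeta_i'+\zeta_i^2)E_i$, giving $R_{1ii1} = -\zeta_i'-\zeta_i^2$ and $R_{1ij1}=0$ for distinct $i,j>1$. Summing $R_{i11i}=R_{1ii1}$ over $i>1$ produces the stated $R_{11}$. The identity $R_{1j1j} = R_{jj} - R/3$ is immediate from Lemma 2.3 by taking $X=E_1$, $Y=Z=E_j$ and using $\nabla f = |\nabla f|E_1$, which reduces that identity to $-|\nabla f|R_{1jj1} = |\nabla f|(\lambda_j - R/3)$.

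For $R_{22}$, $R_{33}$, $R_{44}$, I will expand $R_{jj} = R_{1jj1} + \sum_{k>1,\,k\neq j}R_{kjjk}$ by direct computation of each $R(E_k,E_j)E_j$ in the adapted frame. The piece $\nabla_{E_k}\nabla_{E_j}E_j = \nabla_{E_k}(-\zeta_jE_1) = -\zeta_j\zeta_kE_k$ supplies the $\zeta$-cross terms, and the remaining $-\nabla_{E_j}\nabla_{E_k}E_j - \nabla_{[E_k,E_j]}E_j$ produce linear combinations of the products $ab$, $bc$, $ca$, since $\nabla_{E_p}E_q = \Gamma_{pq}^\ell E_\ell$ with $\ell$ the unique index in $\{2,3,4\}\setminus\{p,q\}$. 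For $j=2$ the extraction of the $E_k$-component gives $R_{3223} = -\zeta_2\zeta_3 - ac - ab - bc$ and $R_{4224} = -\zeta_2\zeta_4 + bc - ab + ac$, whose sum collapses to $-\zeta_2\zeta_3 - \zeta_2\zeta_4 - 2ab$; combined with $R_{1221} = -\zeta_2'-\zeta_2^2$ this is exactly the stated $R_{22}$. For $j=3$ and $j=4$ the analogous sums yield the triple-product residues $-2bc$ and $+2ac$ respectively; substituting $c = -a(\zeta_2-\zeta_4)/(\zeta_3-\zeta_4)$ converts these into $2(\zeta_2-\zeta_4)/(\zeta_3-\zeta_4)\cdot ab$ and $2(\zeta_2-\zeta_3)/(\zeta_4-\zeta_3)\cdot ab$, matching the lemma.

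The main obstacle is the sign bookkeeping in the three-index expansion of $R(E_k,E_j)E_j$. Stray contributions of the form $E_j(\Gamma_{kj}^\ell)E_\ell$ appear in the direction orthogonal to $E_k$ and must be isolated carefully so they do not corrupt the component used to read off $R_{kjjk}$; the six antisymmetrically paired Christoffel symbols must be tracked consistently so that the three kinds of cross products $ab$, $bc$, $ca$ cancel into the correct totals $-2ab$, $-2bc$, $+2ac$ for $j=2,3,4$, after which invoking the Codazzi relation for $c$ is the only step where the hypothesis $\lambda_2,\lambda_3,\lambda_4$ are pairwise distinct is essentially used.
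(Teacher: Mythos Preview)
Your proof is correct and follows essentially the same route as the paper: both pin down all $\nabla_{E_i}E_j$ via Lemma~\ref{threesolbx}, Lemma~\ref{abc60x}, and Lemma~\ref{abc60byx}, then compute the sectional pieces $R_{kjjk}$ directly and use the Codazzi identity~(\ref{ree2x}) to rewrite the Christoffel products in terms of $\Gamma_{34}^2\Gamma_{43}^2$, with Lemma~\ref{threesolx} supplying $R_{1j1j}=R_{jj}-R/3$. One small bookkeeping point: the single relation you quote, $c=-a(\zeta_2-\zeta_4)/(\zeta_3-\zeta_4)$, converts $-2bc$ into the stated form for $R_{33}$, but substituting it into $+2ac$ yields $-2a^2(\zeta_2-\zeta_4)/(\zeta_3-\zeta_4)$ rather than a multiple of $ab$; to reach the form in $R_{44}$ you also need the companion relation $(\zeta_4-\zeta_2)a=(\zeta_3-\zeta_2)b$ from Lemma~\ref{abc60x}(ii) with $(i,j,k)=(3,4,2)$, exactly as the paper's (\ref{ree2x}) is used for all index triples.
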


\begin{proof} $\nabla_{E_1}  E_1 =0$ from Lemma \ref{threesolbx} {\rm (vi)} and
 $\nabla_{E_i} E_1 = \zeta_i E_i$ from (\ref{lambda06ax}).  Let  $i, j >1$ be distinct. From Lemma  \ref{abc60x} (iii) and Lemma \ref{abc60byx},
 $ \langle  \nabla_{E_i} E_i ,   E_j\rangle=0$.
  And  $ \langle  \nabla_{E_i} E_i ,   E_1\rangle= - \langle   E_i ,   \nabla_{E_i} E_1\rangle= - \zeta_i$. So, we get  $\nabla_{E_i}  E_i = -\zeta_i E_1  $.
Now, $ \langle  \nabla_{E_i} E_j, E_i \rangle=-\langle  \nabla_{E_i} E_i, E_j \rangle=0$,
 $\langle  \nabla_{E_i} E_j, E_j \rangle=0  $. And $\langle\nabla_{E_i} E_j, E_1 \rangle = -\langle   \nabla_{E_i} E_1 ,  E_j\rangle =0 $. So,  $\nabla_{E_i} E_j= \sum_{k \neq 1,i,j}\Gamma_{ij}^k E_k$.  Clearly  $\Gamma_{ij}^k =- \Gamma_{ik}^j $.
From Lemma  \ref{abc60x} {\rm (ii)}, $(\lambda_i - \lambda_j ) \langle \nabla_{E_1} E_i, E_j\rangle=(\lambda_1 - \lambda_j ) \langle \nabla_{E_i} E_1, E_j\rangle $.  As $\langle \nabla_{E_i} E_1, E_j\rangle=0 $,  $\langle\nabla_{E_1} E_i, E_j\rangle=0$.     This gives $\nabla_{E_1} E_i =0$. Summarizing, we have got;

\medskip
\noindent For $i, j >1$, $i \neq j$,

$\nabla_{E_1}  E_1 =0$, $\ \ \ \ \ \nabla_{E_i} E_1 = \zeta_i E_i $,
$\ \ \ \ \ \  \nabla_{E_i}  E_i = -\zeta_i E_1 $, $ \ \ \ \nabla_{E_1} E_i=0$.

$\nabla_{E_i} E_j= \sum_{k \neq 1,i,j} \Gamma_{ij}^k E_k $.

\medskip
One uses  Lemma \ref{abc60byx} in computing curvature components. We get $ R_{1ii1} =-\zeta_i^{'}  -  \zeta_i^2$, and
for distinct $\ i, j, k>1$,
$ \ \ R_{jiij} = -\zeta_j \zeta_i  -  \Gamma_{ji}^k \Gamma_{ik}^j  - \Gamma_{ji}^k \Gamma_{ki}^j + \Gamma_{ij}^k \Gamma_{ki}^j$ and $ R_{kijk} = E_k (\Gamma^k_{ij})$. And $R_{1ij1}= 0$.

\medskip

From Lemma \ref{abc60x}, for distinct $i,j,k >1$,  we have
\begin{equation} \label{ree2x}
(\zeta_j - \zeta_k ) \Gamma^k_{ij}=(\zeta_i - \zeta_k ) \Gamma^k_{ji},
  \end{equation}
\noindent which helps to express $R_{ii}$.
 Lemma \ref{threesolx} gives $-R(E_1,E_j,E_j, \nabla f) =  (R_{jj} - \frac{R}{3} )g(\nabla f, E_1)$ for $j >1$. From this we get
\begin{equation} \label{reex}
 R_{1j1j} =  R_{jj} - \frac{R}{3}.
\end{equation}

\end{proof}

\noindent
From the proof of the above Lemma, we may write
\begin{equation} \label{coeff01}
[E_2, E_3] = \alpha E_4, \ \ \  [E_3, E_4] = \beta E_2,  \ \ \ [E_4, E_2] = \gamma E_3.
\end{equation}

From Jacobi identity $[[E_1, E_2], E_3]   +   [[E_2, E_3], E_1] + [[E_3, E_1], E_2]=0  $,  we have
\begin{equation} \label{4dformb1x}
E_1(\alpha) = \alpha ( \zeta_4 -  \zeta_2 - \zeta_3    ).
 \end{equation}

And (\ref{ree2x}) gives;
\begin{equation} \label{4dformb2x}
\beta =   \frac{(\zeta_3 - \zeta_4   )^2}{(\zeta_2 - \zeta_3   )^2} \alpha,     \ \ \ \ \  \gamma =   \frac{(\zeta_2 - \zeta_4   )^2}{(\zeta_2 - \zeta_3   )^2} \alpha.
\end{equation}

\bigskip

\noindent
We  set $a:= \zeta_2$, $b:= \zeta_3$ and  $c:= \zeta_4$. From (\ref{0002bx}), $\zeta_i f^{'} = f (R_{ii}- \frac{R}{3})+xR_{ii} +y(R) $ for $i >1$. With this and Lemma \ref{77bx},

\bigskip

$ (a - b)\frac{f^{'}}{f} =(1 + \frac{x}{f})(R_{22} - R_{33}) =  (1 + \frac{x}{f}) \{(b  -a )c
 -2\{ 1+  \frac{( a - c )}{ b - c   }\}\Gamma_{34}^2 \Gamma_{43}^2  \} $.
So,
\begin{equation} \label{fprix}
 -\frac{f^{'}}{f} = (1 + \frac{x}{f}) \{c  +2 \frac{( a+ b   - 2c )}{ (a - b)(b - c )  }\Gamma_{34}^2 \Gamma_{43}^2\}.
 \end{equation}
Similarly,
 $(a - c)\frac{f^{'}}{f} =(1 + \frac{x}{f}) (R_{22} - R_{44}) = (1 + \frac{x}{f}) \{ (c  -a )b
 -2\{ 1+  \frac{( a - b )}{ c - b   }\}\Gamma_{34}^2 \Gamma_{43}^2 \}  $.
 So,
\begin{equation} \label{fprx}
-\frac{f^{'}}{f}= (1 + \frac{x}{f}) \{ b
 +2  \frac{( a + c - 2b )}{(a - c) (c - b )  }\Gamma_{34}^2 \Gamma_{43}^2 \}.
 \end{equation}
From (\ref{fprix}) and (\ref{fprx}),
we get
\begin{equation} \label{fpr31x}
 4\Gamma_{34}^2 \Gamma_{43}^2 = \frac{(a - b)(a - c) (b - c )^2}{ ( a^2  +b^2+ c^2 - ab  -  bc - ac ) },
  \end{equation}

\begin{equation} \label{fpr2x}
-\frac{f^{'}}{f}=  (1 + \frac{x}{f}) \frac{a^2  b +   a^2 c  + a   b^2 +  a  c^2 +  b^2 c +   c^2   b   - 6a  b   c }{ 2 ( a^2  +b^2+ c^2 - ab  -  bc - ac ) }.
 \end{equation}

\bigskip
The formula (\ref{reex}) gives $  R_{1212} -  R_{1313}=  R_{22} - R_{33}$, which reduces to
\begin{eqnarray} \label{15eqnxb}
2(a^{'}   - b^{'})  &= -2(a^2 - b^2)+
 b c-a c  +\frac{(a - b)(b - c) (c - a )( a +b  - 2c )}{ 2( a^2  +b^2+ c^2 - ab  -  bc - ac ) }  \nonumber \\
 &=  -  2(a^2 - b^2)  +  \frac{(a-b)}{2P}  A,   \hspace{4cm}
 \end{eqnarray}
where we set $P:= a^2  +b^2+ c^2 - ab  -  bc - ac $ and $A:= 6abc -a^2b -ab^2 -a^2c -ac^2 -b^2c -bc^2   $. By symmetry, we get
\begin{eqnarray} \label{15eqnx}
\zeta_i^{'}   - \zeta_j^{'} =  -  (\zeta_i^2 - \zeta_j^2)  +  \frac{(\zeta_i-\zeta_j)}{4P}  A,   \ \ {\rm for}  \ i, j \in \{2,3, 4\}.
 \end{eqnarray}

\bigskip
The formula (\ref{15eqnx}) looks  different from the corresponding one in the soliton's study  in \cite{Ki}; $ \ \zeta_i^{'}   - \zeta_j^{'} =  -  (\zeta_i^2 - \zeta_j^2)$. But surprisingly the next proposition still works through in resolving  (\ref{0002bx}); refer to the proposition 3.4 in \cite{Ki}. Here the formula (\ref{fpr2x}) is crucial.

\begin{prop} \label{4dformcx}
Let $(M,g,f)$ be a four dimensional Riemannian manifold with harmonic curvature, satisfying (\ref{0002bx}) with nonconstant $f$.
For any adapted frame field $E_j$, $j=1,2,3,4$,
in an open dense subset $M_{r} \cap \{ \nabla f \neq 0  \}$ of $M$,
 the three eigenfunctions $\lambda_2, \lambda_3, \lambda_4$ cannot be  pairwise distinct, i.e. at least two of the three coincide.

\end{prop}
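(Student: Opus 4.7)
Argue by contradiction and assume that on some nonempty open subset $W \subset M_r \cap \{\nabla f \neq 0\}$ the three eigenfunctions $\lambda_2,\lambda_3,\lambda_4$ are pairwise distinct. By \eqref{lambda06ax} the $\zeta_i$ are affine in $\lambda_i$ with a common nonzero slope at generic points of $W$, so equivalently $a,b,c$ are pairwise distinct there. The overall strategy parallels Proposition~3.4 in \cite{Ki} for gradient Ricci solitons; the novelty here is the extra $\frac{(\zeta_i-\zeta_j)A}{4P}$ term in \eqref{15eqnx}, and the first task is to show that \eqref{fpr2x} tames this correction.

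Reading \eqref{fpr2x} as $\frac{A}{4P} = \frac{f'}{2(f+x)}$ and dividing \eqref{15eqnx} by $\zeta_i-\zeta_j$ yields
\[
(\log|\zeta_i - \zeta_j|)' \;=\; -(\zeta_i+\zeta_j) \,+\, \frac{f'}{2(f+x)}, \qquad i,j \in \{2,3,4\},\ i \neq j.
\]
Equivalently, $\eta_{ij} := (\zeta_i-\zeta_j)/\sqrt{f+x}$ satisfies the clean ODE $\eta_{ij}' = -(\zeta_i+\zeta_j)\,\eta_{ij}$: the explicit $f$-dependence has been absorbed into the prefactor, and the surviving ODE system for the $\eta_{ij}$ has the same shape as the corresponding system in \cite{Ki}.

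Next, extract algebraic data from the frame structure. Applying \eqref{ree2x} with $(i,j,k)=(3,4,2)$ to \eqref{fpr31x} yields $(\Gamma_{34}^2)^2 = (a-b)^2(b-c)^2/(4P)$, and then \eqref{4dformb2x} gives $\alpha^2 = (a-b)^4/(4P)$; the analogous $\beta^2=(b-c)^4/(4P)$, $\gamma^2=(a-c)^4/(4P)$ follow by the symmetric Jacobi relations underlying \eqref{4dformb1x}. Differentiating $\log\alpha^2 = 2\log|a-b| - \log P$ along $E_1$ and equating with $(\log\alpha^2)' = 2(c-a-b)$ from \eqref{4dformb1x}, then substituting the log-derivative formula above, one obtains
\[
(\log P)' \;=\; -2(a+b+c) + \frac{2f'}{f+x},
\]
and the same conclusion is delivered consistently by the $\beta$ and $\gamma$ identities.

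From here the endgame mimics \cite[Prop.~3.4]{Ki} applied to the $f$-free ODE system for the $\eta_{ij}$: one couples the formula for $(\log P)'$ with the constancy of $R = \lambda_1+\lambda_2+\lambda_3+\lambda_4$ and with the two independent expressions for $R_{22}$---the Codazzi-type one from Lemma~\ref{77bx} and the trace-type one $\zeta_2 f' = (f+x)R_{22} - \tfrac{R}{3}f + y(R)$ coming from \eqref{0002bx}---and derives an algebraic relation in $a,b,c$ whose only solutions satisfy $(a-b)(b-c)(a-c)=0$, contradicting the standing assumption. The principal obstacle is this final algebraic step: the $(f+x)$ corrections introduced by the $\eta_{ij}$ normalization and by the substitution $A/(4P) = f'/(2(f+x))$ must cancel on the nose, and it is precisely \eqref{fpr2x}---the projection of \eqref{0002bx} onto the differences $R_{22}-R_{33}$ and $R_{22}-R_{44}$---that enforces this cancellation, which is the sense in which the author's remark that \eqref{fpr2x} is crucial should be understood.
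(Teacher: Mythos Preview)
Your setup is largely correct and closely parallels the paper's: you correctly obtain $\alpha^2=(a-b)^4/(4P)$, and using \eqref{4dformb1x} together with $(\log|a-b|)'=-(a+b)+A/(4P)$ you arrive at $(\log P)'=-2(a+b+c)+A/P$ (your $2f'/(f+x)$). Note a slip in your display: $\log\alpha^2=4\log|a-b|-\log(4P)$, not $2\log|a-b|-\log P$; the final formula is nevertheless right.

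The genuine gap is the ``endgame.'' Having one expression for $(\log P)'$ is not enough, and the ingredients you name---constancy of $R$ and the two expressions for $R_{22}$---are already absorbed into \eqref{15eqnx} and \eqref{fpr2x}, so they do not furnish an independent relation. What actually closes the argument (and what the paper does, in the guise of computing $2\alpha\alpha'$ twice) is to differentiate $P$ \emph{directly} from its definition, using \eqref{15eqnx} term by term: this is the computation \eqref{13eqnx}, which gives $P'=-2(a+b+c)P-\tfrac{A}{2}$, hence $(\log P)'=-2(a+b+c)-A/(2P)$. Comparing with your formula forces $A/P=-A/(2P)$, i.e.\ $A=0$, and then \eqref{fpr2x} gives $f'=0$, the contradiction. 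Your substitution $A/(4P)=f'/(2(f+x))$, made before you ever compute $P'$ independently, hides exactly the quantity you need to kill; the paper deliberately keeps $A$ explicit until the two computations of $2\alpha\alpha'$ are equated. Without the step \eqref{13eqnx} your sketch does not reach $(a-b)(b-c)(a-c)=0$, and appealing to \cite[Prop.~3.4]{Ki} in the abstract does not supply it, because in the soliton case the two expressions for $(\log P)'$ are consistent and the contradiction there comes from a different place.
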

\begin{proof}
Suppose that  $\lambda_2, \lambda_3, \lambda_4$ are pairwise distinct. We shall prove then that $f$ should be a constant, a contradiction to the hypothesis.

 \noindent
 \medskip
From (\ref{fpr31x}) and (\ref{ree2x}),

$ (\alpha - \gamma + \beta)^2= 4(\Gamma_{34}^2)^2  = 4 \Gamma_{34}^2 \Gamma_{43}^2 \frac{(a-b)}{(a-c)}  =\frac{(a - b)^2 (b - c )^2}{ ( a^2  +b^2+ c^2 - ab  -  bc - ac ) }$.

From (\ref{4dformb2x}),
\begin{eqnarray*}
(\alpha - \gamma + \beta)^2 =\alpha^2 \{1 -  \frac{(a - c   )^2}{(a - b   )^2}  +  \frac{(b - c   )^2}{(a - b   )^2}  \}^2  = \frac{4 \alpha^2 (b  - c )^2}{(a - b   )^2}.
 \end{eqnarray*}

So,   $ \alpha^2    = \frac{(a - b)^4 }{ 4P } .$ Since $a,b,c$ are all functions of $s$ only, so is $\alpha$.
  We compute from (\ref{15eqnx})

  \begin{eqnarray} \label{13eqnx}
  (a-b) (a^{'} -b^{'})  + (a-c) (a^{'} -c^{'})+ (b-c) (b^{'} -c^{'})  \hspace{4cm} \nonumber \\
  =  -(a - b)  (a^2 - b^2)  -(a - c)  (a^2 - c^2)  -(b - c)  (b^2 - c^2)   \hspace{1.3cm} \nonumber \\
   +  \frac{A}{4P}\{(a-b)^2 + (a-c)^2 +(b-c)^2  \}              \hspace{4.5cm} \nonumber \\
 = -2(a^3 +b^3 + c^3) +a^2b +ab^2 + a^2c  +ac^2+ b^2 c + bc^2  +  \frac{A}{2}              \hspace{0.7cm} \nonumber \\
    =-2(a^3 +b^3 + c^3 -3abc) - \frac{1}{2}A    \hspace{5.2cm}
 \end{eqnarray}
Differentiating $ \alpha^2    = \frac{(a - b)^4 }{ 4P } $ in $s$ and using (\ref{15eqnx}) and (\ref{13eqnx}),

  \begin{eqnarray*}
 2 \alpha \alpha^{'}     =  &  \frac{(a - b)^3 (a^{'} - b^{'}) }{ P}  -   \frac{(a - b)^4 ( 2a a^{'}  +2b b^{'} + 2c c^{'}  - a b^{'} - b a^{'} -  a c^{'} -c a^{'} - c b^{'}- b c^{'}  )  }{ 4P^2 } \hspace{1.5cm} \\
 =  &  \frac{-(a - b)^3 (a^2 - b^2) }{ P} +  \frac{(a-b)^4}{4P^2}  A  -   \frac{(a - b)^4 \{ (a-b) (a^{'} -b^{'})  + (a-c) (a^{'} -c^{'})+ (b-c) (b^{'} -c^{'})    \}  }{ 4P^2 }  \\
 =  & - \frac{(a - b)^4 (a+b) }{ P} +  \frac{(a-b)^4}{4P^2}  A   +  \frac{(a - b)^4 \{ 2(a^3 +b^3 + c^3 -3abc)   \}  }{ 4P^2 }  +  \frac{(a - b)^4 \{ \frac{1}{2}A  \}  }{ 4P^2 }  \hspace{1cm} \\
= &-\frac{(a - b)^4  }{ P} \frac{(a+b-c)}{2}  +  \frac{3(a-b)^4}{8P^2}  A \hspace{7cm}
 \end{eqnarray*}

Meanwhile, from (\ref{4dformb1x}) and  $ \alpha^2    = \frac{(a - b)^4 }{ 4P } $,

\begin{equation*}
 2 \alpha \alpha^{'}= 2 \alpha^2 ( c -  a - b    )= -\frac{(a - b)^4 }{ 2P }( a +  b - c    ).
 \end{equation*}

\noindent Equating these two expressions for $ 2 \alpha \alpha^{'}$, we get $A=0$.
From (\ref{fpr2x}), $f^{'} =0$.
\end{proof}

\section{4-dimensional space with $\lambda_2 \neq \lambda_3 =\lambda_4$ }

In this section we study when exactly two of $\lambda_2, \lambda_3, \lambda_4$ are equal.  We may well assume that $ \lambda_2  \neq \lambda_3=  \lambda_4 $. We use (\ref{lambda06ax}), Lemma \ref{abc60x} and Lemma \ref{abc60byx} to compute $\nabla_{E_i} E_j$'s and get the next lemma.

\begin{lemma} \label{claim112nax}
Let $(M,g,f)$ be a four dimensional Riemannian manifold with harmonic curvature satisfying (\ref{0002bx}) with nonconstant $f$.
Suppose that  $ \lambda_2  \neq \lambda_3=  \lambda_4$ for an adapted frame fields $E_j$, $j=1,2,3,4$,
on an open subset $U$ of $M_{r} \cap \{ \nabla f \neq 0  \}$.
Then we have;

 \medskip
 $[E_1, E_2]= -\zeta_2 E_2  $ and $[E_3, E_4]= \beta_3 E_3 + \beta_4 E_4,$  for some functions $\beta_3$, $\beta_4$.

\smallskip
In particular, the distribution spanned by $E_1$ and $E_2$ is integrable. So is that  spanned by $E_3$ and $E_4$.
\end{lemma}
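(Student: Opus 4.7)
The plan is to read off both brackets from the covariant derivatives using the standard formula $[X,Y]=\nabla_X Y - \nabla_Y X$, and to compute the needed components of $\nabla_{E_i}E_j$ by invoking Lemma \ref{abc60x}(ii), Lemma \ref{threesolbx}(vi), and formula (\ref{lambda06ax}). The key input is that the adapted frame is Ricci-eigen with $\lambda_2$ separated from the common value $\lambda_3=\lambda_4$, so Lemma \ref{abc60x}(ii) will force certain off-diagonal Christoffel symbols to vanish or to be symmetric.

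First I would handle $[E_1,E_2]$. From (\ref{lambda06ax}) I have $\nabla_{E_2}E_1=\zeta_2 E_2$, and from Lemma \ref{threesolbx}(vi) I have $\nabla_{E_1}E_1=0$. To pin down $\nabla_{E_1}E_2$ I use Lemma \ref{abc60x}(ii) with $(i,j,k)=(1,2,3)$ and $(1,2,4)$: this gives
\[
(\lambda_2-\lambda_k)\langle \nabla_{E_1}E_2,E_k\rangle=(\lambda_1-\lambda_k)\langle \nabla_{E_2}E_1,E_k\rangle=\zeta_2(\lambda_1-\lambda_k)\langle E_2,E_k\rangle=0,
\]
for $k=3,4$, and since $\lambda_2\neq\lambda_3=\lambda_4$ the factor $(\lambda_2-\lambda_k)$ is nonzero, so $\langle\nabla_{E_1}E_2,E_k\rangle=0$. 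Combined with $\langle\nabla_{E_1}E_2,E_2\rangle=0$ (unit length) and $\langle\nabla_{E_1}E_2,E_1\rangle=-\langle E_2,\nabla_{E_1}E_1\rangle=0$, I conclude $\nabla_{E_1}E_2=0$, hence $[E_1,E_2]=-\zeta_2 E_2$.

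For $[E_3,E_4]$ I want to rule out any $E_1$ and $E_2$ components. The $E_1$ component is immediate from (\ref{lambda06ax}):
\[
\langle[E_3,E_4],E_1\rangle=-\langle E_4,\nabla_{E_3}E_1\rangle+\langle E_3,\nabla_{E_4}E_1\rangle=-\zeta_3\langle E_4,E_3\rangle+\zeta_4\langle E_3,E_4\rangle=0.
\]
For the $E_2$ component I apply Lemma \ref{abc60x}(ii) with $(i,j,k)=(3,4,2)$:
\[
(\lambda_4-\lambda_2)\langle\nabla_{E_3}E_4,E_2\rangle=(\lambda_3-\lambda_2)\langle\nabla_{E_4}E_3,E_2\rangle;
\]
using $\lambda_3=\lambda_4$ and $\lambda_2\neq\lambda_4$, this yields $\langle\nabla_{E_3}E_4,E_2\rangle=\langle\nabla_{E_4}E_3,E_2\rangle$, whence $\langle[E_3,E_4],E_2\rangle=0$. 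Therefore $[E_3,E_4]=\beta_3 E_3+\beta_4 E_4$ for some scalar functions $\beta_3,\beta_4$, and the integrability of the two orthogonal planar distributions follows from Frobenius.

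I do not anticipate any real obstacle here: the only subtlety is bookkeeping of the indices in Lemma \ref{abc60x}(ii) and ensuring the eigenvalue gap $\lambda_2\neq\lambda_3=\lambda_4$ is invoked exactly when one divides out $(\lambda_2-\lambda_k)$. Everything else is the definitions of $\zeta_i$, the orthonormality of the frame, and the identity $\nabla_{E_1}E_1=0$ already established.
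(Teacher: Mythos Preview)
Your proof is correct and follows essentially the same approach as the paper: both arguments derive $\nabla_{E_1}E_2=0$ from Lemma~\ref{abc60x}(ii) with (\ref{lambda06ax}), and both kill the $E_1$- and $E_2$-components of $[E_3,E_4]$ via (\ref{lambda06ax}) and the Codazzi identity. The only cosmetic difference is that the paper applies Lemma~\ref{abc60x}(ii) with $(i,j,k)=(3,2,4)$ to obtain the stronger fact $\langle\nabla_{E_3}E_4,E_2\rangle=0$ (and symmetrically $\langle\nabla_{E_4}E_3,E_2\rangle=0$), whereas you use $(i,j,k)=(3,4,2)$ to show directly that $\langle[E_3,E_4],E_2\rangle=0$; for the statement of the lemma both are sufficient.
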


\begin{proof}
From Lemma \ref{abc60x} {\rm (ii)} and (\ref{lambda06ax}),
$(\lambda_2 - \lambda_i ) \langle \nabla_{E_1} E_2, E_i\rangle=(\lambda_1 - \lambda_i ) \langle \nabla_{E_2} E_1, E_i\rangle=(\lambda_1 - \lambda_i) \langle\zeta_2 E_2, E_i\rangle=0 ,$ for $i=3,4$. This gives $\nabla_{E_1}  E_2=  0$, and so $[E_1, E_2]= -\zeta_2 E_2  $.

From Lemma \ref{abc60x} {\rm (ii)},
$(\lambda_2 - \lambda_4 ) \langle \nabla_{E_3} E_2, E_4\rangle=(\lambda_3 - \lambda_4 ) \langle \nabla_{E_2} E_3, E_4\rangle=0 .$
So,  $\langle\nabla_{E_3}  E_2, E_4\rangle = - \langle  E_2,\nabla_{E_3} E_4\rangle =0$. This and  (\ref{lambda06ax}) yields
$\nabla_{E_3}  E_4 = \beta_3 E_3  $, for some function $ \beta_3$. Similarly,  $\nabla_{E_4}  E_3 = - \beta_4 E_4  $ for some function $ \beta_4$.
Then  $[E_3, E_4]= \beta_3 E_3 + \beta_4 E_4.$
\end{proof}

 We express the metric $g$ in some coordinates as in the following lemma.

\begin{lemma} \label{claim112bx}
Under the same hypothesis as Lemma \ref{claim112nax},

\smallskip
for each point $p_0$ in $U$, there exists a neighborhood $V$ of $p_0$ in $U$ with coordinates $(s,t, x_3, x_4)$  such that $\nabla s= \frac{\nabla f }{ |\nabla f |}$ and $g$ can be written on $V$ as
\begin{equation} \label{mtr1a}
g= ds^2 +  p(s)^2  dt^2 +    h(s)^2 \tilde{g},
\end{equation}
 where  $p:=p(s)$ and $h:=h(s)$ are smooth functions of $s$ and
 $\tilde{g}$ is (a pull-back of) a Riemannian metric of constant curvature, say $k$, on a $2$-dimensional domain with $x_3, x_4$ coordinates.

\end{lemma}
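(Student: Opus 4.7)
The plan is to build coordinates $(s,t,x_3,x_4)$ adapted to the two orthogonal integrable distributions $D_1 := \mathrm{span}(E_1,E_2)$ and $D_2 := \mathrm{span}(E_3,E_4)$, and then read off the claimed warped-product form of $g$ by Lie-transporting the coordinate vectors along the $E_1$- and $E_2$-flows.

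First I assemble the needed covariant derivatives. Applying Lemma \ref{abc60x}(iii) with $u = E_3$, $v = E_2$ together with $E_2(\lambda_3) = 0$ from Lemma \ref{abc60byx} yields $\langle \nabla_{E_3}E_3, E_2\rangle = 0$; using Lemma \ref{abc60x}(ii) one then obtains $\nabla_{E_3}E_2 = \nabla_{E_4}E_2 = 0$ and $\nabla_{E_2}E_2 = -\zeta_2 E_1$. Combined with $\nabla_{E_1}E_1 = \nabla_{E_1}E_2 = 0$ as established in the proof of Lemma \ref{claim112nax} and $\nabla_{E_i}E_1 = \zeta_i E_i$ from (\ref{lambda06ax}), the only remaining unknown is $\nabla_{E_1}E_3 = c\,E_4$ for some function $c$. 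Rotating $E_3,E_4$ within the $2$-dimensional eigenspace by an angle $\phi(s,\cdot)$ with $E_1(\phi) = -c$ (solvable by ODE integration along the $E_1$-flow) allows me to assume $c = 0$, so that $[E_1,E_i] = -\zeta_i E_i$ for $i = 2,3,4$ with $\zeta_4 = \zeta_3$.

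Next I construct the coordinates. On $\Sigma_0 := f^{-1}(f(p_0))$, pick the $2$-dimensional $D_2$-leaf $L_0$ through $p_0$ and coordinates $(x_3,x_4)$ on it. Since $\nabla^{\Sigma_0}_{E_2}E_2 = \nabla_{E_2}E_2 + \zeta_2 E_1 = 0$, the $E_2$-integral curves are unit-speed geodesics in $\Sigma_0$; extending $(x_3,x_4)$ by the $E_2$-flow and letting $t$ be the arc-length parameter gives coordinates $(t,x_3,x_4)$ on a neighborhood of $p_0$ in $\Sigma_0$ with $\partial_t|_{\Sigma_0} = E_2$. Then extend by the $E_1$-flow to obtain $(s,t,x_3,x_4)$ on a neighborhood $V$ with $\partial_s = E_1$. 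Writing $\partial_t = \sum_j a^j E_j$, the identity $[E_1,\partial_t] = 0$ together with the bracket relations just obtained becomes a decoupled linear system of ODEs along the $E_1$-flow whose solutions with initial data $a^j|_{s=0} = \delta_{j2}$ give $\partial_t = p(s)\,E_2$, where $p(s) = \exp\bigl(\int_0^s \zeta_2\bigr)$. An identical argument for $\partial_{x_i}$ ($i=3,4$) shows $\partial_{x_i}$ remains in $D_2$ with $g(\partial_{x_i},\partial_{x_j}) = h(s)^2\,\tilde h_{ij}(t,x_3,x_4)$, where $h(s) = \exp\bigl(\int_0^s \zeta_3\bigr)$. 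Orthogonality $D_1 \perp D_2$ makes $g$ block-diagonal in these coordinates with $g_{ss}=1$ and $g_{tt}=p(s)^2$.

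To remove the $t$-dependence in $\tilde h_{ij}$, note that $\nabla_{E_3}E_2 = \nabla_{E_4}E_2 = 0$ forces $(\mathcal{L}_{E_2}g)(X,Y) = 0$ for $X,Y \in D_2$, whence $\partial_t \tilde h_{ij} = 0$ and $\tilde h_{ij} = \tilde g_{ij}(x_3,x_4)$. Thus $g = ds^2 + p(s)^2 dt^2 + h(s)^2 \tilde g$ on $V$. For the constant-curvature claim, the standard Ricci formula for the warped product $g = \bar g + h(s)^2 \tilde g$ with $2$-dimensional base $\bar g = ds^2 + p(s)^2 dt^2$ and $2$-dimensional fiber $\tilde g$ gives, for the $g$-unit vector $E_3$ tangent to the fiber,
\[
\lambda_3 \;=\; R(E_3,E_3) \;=\; \frac{K_{\tilde g}(x_3,x_4)}{h(s)^2} - \frac{\Delta_{\bar g}h}{h} - \frac{|\nabla_{\bar g}h|^2}{h^2}.
\]
Since $\lambda_3$, $h$ and $p$ all depend only on $s$ by Lemma \ref{abc60byx}, the Gaussian curvature $K_{\tilde g}$ must be a constant $k$. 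The main technical obstacle is the rotation step inside the $2$-dimensional eigenspace to kill $c$, together with the bookkeeping of how $\partial_t$ and the $\partial_{x_i}$ decompose in the adapted frame under the two successive flows; once these are in hand, the metric coefficients fall out from the linear ODEs along $E_1$.
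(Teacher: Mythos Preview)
Your argument is correct and follows essentially the same strategy as the paper: both exploit the integrable orthogonal distributions $D_1=\mathrm{span}(E_1,E_2)$ and $D_2=\mathrm{span}(E_3,E_4)$ to build adapted coordinates and then read off the doubly-warped form. Where the paper simply defers the coordinate construction and the constant-curvature step to Lemmas 4.2, 4.3 and 5.1 of \cite{Ki}, you carry them out explicitly via the $E_2$- and $E_1$-flows, the rotation in the $\lambda_3$-eigenspace to arrange $\nabla_{E_1}E_3=0$, and the Ricci formula for the warped product; this makes your proof self-contained but not genuinely different in method.
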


 \begin{proof}
This proof is little different from the corresponding one in Ricci soliton case. We only sketch the frame of argument and one may refer to the proof of Lemma 4.3 in \cite{Ki} for details.

We let $D^1$ be the 2-dimensional distribution spanned by $E_1 = \nabla s$ and $E_2$. And let $D^2$ be the one spanned by $E_3$ and $E_4$.   Then $D^1$ and $D^2$ are both integrable by Lemma \ref{claim112nax}.
We may consider the coordinates  $(x_1, x_2, x_3, x_4)$ from Lemma 4.2 of \cite{Ki}, so that $D^1$ is tangent to the 2-dimensional level sets

 $ \{ (x_1, x_2, x_3,x_4) |  \ x_3, x_4 \ {\rm constants} \} $ and $D^2$ is tangent to the level sets $ \{ (x_1, x_2, x_3,x_4) |  \ x_1, x_2 \ {\rm constants} \} $.
As $D_1$ and $D_2$ are orthogonal, we get the metric description for $g$ as follows;

$g= g_{11}dx_1^2 +   g_{12} dx_1 \odot dx_2  +  g_{22}dx_2^2 +    g_{33}dx_3^2 +   g_{34} dx_3 \odot dx_4 + g_{44}dx_4^2 $, where $\odot$ is the symmetric tensor product and $g_{ij}$ are functions  of $(x_1, x_2, x_3, x_4)$.

\bigskip

Using   Lemma  \ref{abc60x}, Lemma \ref{abc60byx} and   Lemma \ref{claim112nax}, one then shows that
  \begin{equation}
 g_{11}dx_1^2 +   g_{12} dx_1 \odot dx_2  +  g_{22}dx_2^2 =  ds^2 +   p(s)^2 dt^2, \hspace{2cm} \nonumber
   \end{equation}
   and
  \begin{equation}
   g_{33}dx_3^2 +   g_{34} dx_3 \odot dx_4 + g_{44}dx_4^2 = h(s)^2 \tilde{g}, \hspace{3.2cm}  \nonumber
   \end{equation}
for some functions $p=p(s)$ and $h=h(s)$ and a 2-dimensional metric $\tilde{g}$.

One can use Lemma 5.1 of \cite{Ki} to prove that
$\tilde{g}$ has constant curvature, say $k$.
\end{proof}

\section{Analysis of the metric when $\lambda_2 \neq \lambda_3 =\lambda_4$}
We continue to suppose that  $ \lambda_2  \neq \lambda_3=  \lambda_4$ for an adapted frame fields $E_j$, $j=1,2,3,4$.

The metric $ \tilde{g}$ in (\ref{mtr1a}) can be written locally; $\tilde{g} = dr^2 + u(r)^2 d \theta^2$ on a domain in $\mathbb{R}^2$ with polar coordinates $(r, \theta)$, where $u^{''}(r)=-k u$.
We set an orthonormal basis $e_3 = \frac{\partial}{\partial r}$ and $e_4 = \frac{1}{u(r)} \frac{\partial}{ \partial \theta }$.

\begin{lemma} \label{112typeb1x}
For the local metric $g = ds^2 + p(s)^2  dt^2 + h(s)^2  \tilde{g}$ with harmonic  curvature satisfying (\ref{0002bx}) with nonconstant $f$, obtained in Lemma \ref{claim112bx},  if we set $E_1 = \frac{\partial }{\partial  s}$, $E_2 = \frac{1}{p(s)}\frac{\partial }{ \partial  t}$,  $E_3 =  \frac{1}{h(s)} e_3$ and $E_4 =  \frac{1}{h(s)} e_4$, where $e_3$ and $e_4$ are  as in the above paragraph, then we have the following. Here $R_{ij} = R(E_i, E_j)$ and $R_{ijkl} = R(E_i, E_j, E_k, E_l)$.

\medskip
$\nabla_{E_1}E_i =  0$, for $i=2,3,4$,
$\ \ \ \ \ \nabla_{E_2}  E_2 = -\zeta_2  E_1$, $\ \ \ \ \nabla_{E_3}  E_3 = -\zeta_3  E_1  $,
\begin{eqnarray}  \label{ricci34}
\zeta_2&= \frac{p^{'}}{p}, \ \ \ \ \ \  \zeta_3=\zeta_4=  \frac{h^{'}}{h}  \hspace{6.8cm}   \nonumber \\
R_{1221} &= -  \frac{p^{''}}{p}=-\zeta_2^{'}
 -  \zeta_2^2, \ \ \ \  \ R_{1ii1} =-\zeta_i^{'}
 -  \zeta_i^2= -  \frac{h^{''}}{h},   \ \ {\rm  for}  \ i = 3,4. \hspace{0.3cm}   \nonumber \\
R_{11} & =  -\zeta_2^{'}
 -  \zeta_2^2 -2\zeta_3^{'}
 -  2\zeta_3^2  =  - \frac{p^{''}}{p}- 2\frac{h^{''}}{h}. \hspace{4.3cm}  \nonumber \\
R_{22} & =  -\zeta_2^{'}
 -  \zeta_2^2 -2\zeta_2 \zeta_3 = -\frac{p^{''}}{p} -2 \frac{p^{'}}{p} \frac{h^{'}}{h} . \hspace{4.8cm}  \nonumber  \\
R_{33} &=R_{44} =     -\zeta_3^{'}
 -  \zeta_3^2 -\zeta_3 \zeta_2  -(\zeta_3)^2  + \frac{k}{h^2}= -\frac{h^{''}}{h} - \frac{p^{'}}{p} \frac{h^{'}}{h} - \frac{(h^{'})^2}{h^2}  + \frac{k}{h^2}.   \nonumber
\end{eqnarray}

\end{lemma}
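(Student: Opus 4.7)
The statement is a bookkeeping computation on the doubly warped product metric just constructed, so the plan is to pin down the connection via Koszul's formula on the orthonormal frame, then read off the curvatures through
\[
R(X,Y)Z \;=\; \nabla_X \nabla_Y Z - \nabla_Y \nabla_X Z - \nabla_{[X,Y]}Z .
\]
Because the intrinsic geometry of $\tilde g$ (with orthonormal frame $e_3,e_4$ on a constant-curvature-$k$ 2-manifold) is already understood, everything reduces to tracking how the two warping factors $p(s)$ and $h(s)$ enter.

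\emph{Step 1: Lie brackets.} Since $p,h$ are functions of $s$ only and $e_3,e_4$ are tangent to the $\tilde g$-factor, a direct calculation gives
$[E_1,E_2]=-\tfrac{p'}{p}E_2$, $[E_1,E_i]=-\tfrac{h'}{h}E_i$ for $i=3,4$, $[E_2,E_i]=0$ for $i=3,4$, and $[E_3,E_4]=\tfrac{1}{h^2}[e_3,e_4]$, which is a function of $(r)$ times $E_4$. In particular the distributions $\mathrm{span}\{E_1,E_2\}$ and $\mathrm{span}\{E_3,E_4\}$ are integrable, matching Lemma \ref{claim112nax}.

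\emph{Step 2: Covariant derivatives.} Feeding these brackets into Koszul's formula
\[
2\langle \nabla_X Y, Z\rangle \;=\; \langle [X,Y],Z\rangle - \langle [X,Z],Y\rangle - \langle [Y,Z],X\rangle
\]
for the orthonormal frame immediately gives $\nabla_{E_1}E_i=0$ for $i=2,3,4$ (all three bracket terms vanish), $\nabla_{E_i}E_1=\zeta_i E_i$ with $\zeta_2=p'/p$ and $\zeta_3=\zeta_4=h'/h$, and therefore $\nabla_{E_i}E_i=-\zeta_i E_1$. This identification of $\zeta_i$ is automatically consistent with the general formula in (\ref{lambda06ax}).

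\emph{Step 3: Curvatures.} For the radial sectional curvatures, using $\nabla_{E_1}E_i=0$, $\nabla_{E_i}E_1=\zeta_i E_i$, and $E_1=\partial_s$,
\[
R(E_1,E_i)E_i \;=\; \nabla_{E_1}(-\zeta_i E_1) - 0 - \nabla_{[E_1,E_i]}E_i \;=\; -(\zeta_i'+\zeta_i^2)E_1,
\]
yielding $R_{1221}=-p''/p$ and $R_{1ii1}=-h''/h$ for $i=3,4$. For the tangential piece $R_{3443}$, the leaf $\{s,t \text{ const}\}$ carries the metric $h(s)^2\tilde g$ with intrinsic sectional curvature $k/h(s)^2$; combining this via the Gauss equation with the extrinsic contribution $-(h')^2/h^2$ coming from the $s$-warping gives the claimed value. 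Summing $R_{ii}=\sum_j R_{ijji}$ then produces the Ricci components displayed in the lemma.

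\emph{Anticipated obstacle.} The only subtle point is the $R_{33}=R_{44}$ calculation, where one must carefully separate the two warping contributions ($-h''/h$ from the $s$-direction, $-(p'/p)(h'/h)$ from the interaction with the $t$-warping) from the intrinsic leaf curvature $k/h^2$ and the extrinsic Gauss term $-(h')^2/h^2$. Once Step 2 is in place the rest is mechanical and the values of $\zeta_i$ in terms of $p,h$ follow from a single application of Koszul.
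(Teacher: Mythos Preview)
Your proposal is correct and is arguably the cleanest way to carry out this computation. The paper takes a somewhat different, hybrid route: it first imports several of the covariant-derivative facts ($\nabla_{E_1}E_2=0$, the form of $\nabla_{E_3}E_4$ and $\nabla_{E_4}E_3$, the vanishing of $\langle\nabla_{E_2}E_2,E_i\rangle$ for $i=3,4$) from the abstract Codazzi-tensor arguments of Lemma~\ref{claim112nax} and Lemma~\ref{abc60x}, and then fills in the remaining pieces (e.g.\ $\rho=0$, $\beta_3=0$, $q=0$, the values of $\zeta_i$) by direct coordinate calculation. Your approach bypasses all of the harmonic-curvature/Codazzi machinery and simply runs Koszul's formula on the explicit doubly warped product $ds^2+p(s)^2dt^2+h(s)^2\tilde g$; this is legitimate precisely because, once Lemma~\ref{claim112bx} hands you the metric in this form, the conclusions of Lemma~\ref{112typeb1x} are purely statements about that metric and no longer require the hypothesis. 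What you gain is a self-contained proof; what the paper's route buys is that it makes transparent which pieces are forced by the Codazzi structure alone (before one knows the explicit metric) versus which require the coordinate model.

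Two small points of tidiness: in Step~2 the three bracket terms for $\nabla_{E_1}E_i$ do not literally vanish individually but rather cancel in pairs, and in Step~1 the bracket $[E_3,E_4]=\tfrac{1}{h^2}[e_3,e_4]$ carries a factor of $1/h(s)$ as well as a function of $r$, so it is not a function of $r$ alone. Neither affects the argument.
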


\begin{proof}
From the proof of Lemma \ref{claim112nax}, we already have $\nabla_{E_1}  E_2=  0$,
$\nabla_{E_3}  E_4 = \beta_3 E_3  $  and $\nabla_{E_4}  E_3 = - \beta_4 E_4  $.

As $\langle\nabla_{E_1}  E_3,  E_2\rangle = -\langle E_3,  \nabla_{E_1}  E_2\rangle=  0$, one can readily get $\nabla_{E_1}  E_3= \rho E_4$ for some function $\rho$  and $\nabla_{E_1}  E_4= -\rho E_3$.  We get $\rho=0$ by computing directly (in coordinates) $\nabla_{E_1}  E_3= \nabla_{\frac{\partial }{\partial  s}}   \frac{1}{h(s)} \frac{\partial }{\partial  r}=0$.

From Lemma \ref{abc60byx} and Lemma \ref{abc60x} (iii); $ (\lambda_2 - \lambda_i   )\langle\nabla_{E_2}  E_2, E_i\rangle =  E_i(\lambda_2) =0  $ for $i=3,4$ and
$\langle\nabla_{E_2}  E_2, E_1\rangle = - \langle  E_2,  \nabla_{E_2}  E_1 \rangle= -\zeta_2(s)$.
So, $\nabla_{E_2}  E_2 = -\zeta_2(s) E_1  $.
By similar argument,
$\nabla_{E_3}  E_3 = -\zeta_3  E_1 -  \beta_3  E_4  $, $\nabla_{E_4}  E_4 = -\zeta_4  E_1 + \beta_4 E_3  $, for some functions $\beta_3$ and $\beta_4$. Direct coordinates computation gives $\beta_3 =0$.

Then $\nabla_{E_2}  E_3=   q E_4$ for some function $q$  and $\nabla_{E_2}  E_4=  - q E_3$. One computes directly  $q=0$. We simply get $\nabla_{E_3}  E_2=0$ and $\nabla_{E_4}  E_2=0$.

We compute directly that $\nabla_{E_2}E_1 = \frac{p^{'}}{p} E_2 $ and $\nabla_{E_3}E_1 = \frac{h^{'}}{h} E_3 $ so that
 (\ref{lambda06ax}) gives $\zeta_2= \frac{p^{'}}{p}$ and $\zeta_3=\zeta_4=  \frac{h^{'}}{h}$.
We can also get $\nabla_{E_3}E_4 = 0$, $\ \ \ \ \nabla_{E_4}E_3 =  -\beta_4 E_4,  \ $ where $  \ \beta_4 = \frac{u^{'}(r)}{h(s)u(r)}$.

These computations would help to compute the curvature components.
\end{proof}

We denote $a:=\zeta_2$ and $b:=\zeta_3$.

\begin{lemma} \label{v01}
For the local metric $g = ds^2 + p(s)^2  dt^2 + h(s)^2  \tilde{g}$ with harmonic  curvature satisfying (\ref{0002bx}) with nonconstant $f$, obtained in Lemma \ref{claim112bx},
it holds that
\begin{eqnarray}
( ab +\frac{R}{12})b = 0.
\end{eqnarray}

\end{lemma}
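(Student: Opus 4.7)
The plan is to extract, for the functions $a = \zeta_2 = p'/p$ and $b = \zeta_3 = h'/h$, two families of identities and then combine them to eliminate all derivatives, producing a closed algebraic relation. The two tools are: (i) the special curvature identity of Lemma \ref{threesolx}, fed with $(X,Y,Z) = (E_j, E_1, E_j)$ for $j=2,3$, which yields $R_{1j1j} = R_{jj} - R/3$; and (ii) the Codazzi property $(\nabla_{E_1}\mathrm{Rc})(E_j,E_j) = (\nabla_{E_j}\mathrm{Rc})(E_1,E_j)$, which, after using $\nabla_{E_1}E_j = 0$ and the $E_1$-component of $\nabla_{E_j}E_j$ from Lemma \ref{112typeb1x}, reduces to the first-order ODE $R_{jj}' = \zeta_j(R_{11} - R_{jj})$ for $j=2,3$.

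Applying (i) for $j=2$ and inserting the explicit curvature formulas of Lemma \ref{112typeb1x} (in particular $R_{1212} = a' + a^2$ and $R_{22} = -a' - a^2 - 2ab$) produces the clean identity
\[
a' + a^2 + ab + \tfrac{R}{6} = 0, \qquad (\mathrm{I})
\]
equivalently $R_{22} = R/6 - ab$. Applying (i) for $j=3$ similarly eliminates $k/h^2$ and gives $R_{33} = b' + b^2 + R/3$; combined with $R = R_{11} + R_{22} + 2R_{33}$, this leaves $R_{11} = ab + R/6 - 2(b'+b^2)$.

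Next I would plug these simplifications into (ii). For $j=2$, using $(\mathrm{I})$ to eliminate $a'$, one obtains
\[
ab' = a^2 b - 3ab^2 - \tfrac{Rb}{6}, \qquad (\mathrm{II})
\]
and for $j=3$,
\[
b'' + 5bb' + 3b^3 - ab^2 + \tfrac{Rb}{6} = 0. \qquad (\mathrm{III})
\]
The heart of the argument is to differentiate $(\mathrm{II})$ once in $s$ and apply $(\mathrm{I})$ repeatedly to express $ab''$ as a polynomial in $a, b, b'$. Substituting the result into $a \cdot (\mathrm{III})$ (whose leading term is $ab''$) causes the $ab''$ and $abb'$ contributions to cancel, leaving a single closed-form expression for $a^2 b'$. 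Comparing it with $a \cdot (\mathrm{II})$ (which directly reads $a^2 b' = a^3b - 3a^2b^2 - aRb/6$) and rearranging produces the factored identity
\[
3\,b\,(b-a)\bigl(ab + \tfrac{R}{12}\bigr) \;=\; 0.
\]

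Finally, I would observe that $a = b$ is incompatible with the standing hypothesis $\lambda_2 \neq \lambda_3$: setting $a = b$ in the formulas derived above and using $(\mathrm{I})$ to rewrite $b' + b^2 = a' + a^2 = -ab - R/6$ yields $R_{33} = R/6 - a^2 = R_{22}$, contradicting $\lambda_2 \neq \lambda_3$. Hence the factor $(b-a)$ is nonvanishing and the lemma follows. The main obstacle is purely computational, namely the bookkeeping involved in the differentiation of $(\mathrm{II})$ and the ensuing $ab''$ cancellation that isolates the clean factorization; once that is in hand, both the factored identity and the exclusion of $a = b$ are straightforward.
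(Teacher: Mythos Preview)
Your proof is correct and takes a genuinely different route from the paper's. Both arguments begin with the same identity $R_{1j1j}=R_{jj}-\tfrac{R}{3}$ (your tool (i), the paper's equation (\ref{reex})), which produces (I) identically. From there the paper takes a \emph{PDE route}: it evaluates (\ref{0002bx}) on $(E_2,E_2)$ and $(E_3,E_3)$ to obtain $f$-dependent identities (their (6.4)--(6.5)), combines them with the $j=2$ Codazzi equation to reach the key relation $x(ab+\tfrac{R}{3})+y(R)=-fab$, and then differentiates \emph{that} to force $(ab+\tfrac{R}{12})b=0$. You instead stay entirely inside the curvature: you use the Codazzi identity for \emph{both} $j=2$ and $j=3$, producing (II) and the second-order relation (III), and then algebraically eliminate $b''$ and $b'$ to land directly on the factored expression $b(b-a)(ab+\tfrac{R}{12})=0$. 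Your path is cleaner for this lemma in that it never touches $f,x,y(R)$ explicitly. The trade-off is that the paper's route yields, as a byproduct, the intermediate identity $x(ab+\tfrac{R}{3})+y(R)=-fab$, which is exactly what is invoked at the start of the proofs of Propositions~\ref{abR} and~\ref{v02}; your argument does not supply this, so you would need to recover it separately before proceeding.
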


\begin{proof} (\ref{reex})  gives
\begin{eqnarray}
  2a^{'} + 2a^2 +  2ab +\frac{R}{3}=0, \hspace{0.5cm}  \label{m01x} \\
 2b^{'} +  3b^2 +ab  - \frac{k}{h^2} + \frac{R}{3}=0. \label{m02x}
\end{eqnarray}
From
 $\nabla d f (E_i, E_i) = f (Rc- \frac{R}{3}g)(E_i, E_i) + x R(E_i, E_i) + y(R)$, we get
 $-(\nabla_{E_i} E_i)  f= f (R_{ii}- \frac{R}{3})+xR_{ii} +y(R) = -f R_{1ii1}+xR_{ii} +y(R)  $, for $i=2,3$. From  Lemma \ref{112typeb1x}  we have
\begin{eqnarray}
f^{'} a =  f (a^{'}+  a^2)-x(a^{'}+  a^2 + 2ab ) +y(R),
  \ \    \  \hspace{0.3cm}  \label{m04x} \\
f^{'} b =   f(b^{'}
+  b^2)-x(b^{'} +  2b^2+ab -\frac{k}{h^2}  ) +y(R) .  \label{m04bx}
\end{eqnarray}

From the harmonic curvature condition we have;
\begin{eqnarray} \label{m05x}
  \ \ 0 & =  \nabla_{E_1} R_{22}  - \nabla_{E_2} R_{12}  \ \   =   \   \nabla_{E_1}  (R_{22})   +  R ( \nabla_{E_{2} }  E_{1},   E_{2})   + R ( \nabla_{E_2 }E_2,   E_1) \nonumber \\
&= ( R_{22} )^{'} +a(R_{22}-  R_{11} )  \hspace{6.1cm}\nonumber \\
& =   ( -a^{'}-  a^2 -2a b )^{'}  + a ( -2ab  +2b^{'}+  2b^2)  \hspace{3.1cm}\nonumber \\
& =  -a^{''}-2aa^{'}-2a^{'} b -2a^2b +  2ab^2. \hspace{4.2cm}
\end{eqnarray}

Differentiate (\ref{m01x}) to get  $ \ \   a^{''} + 2aa^{'} +  a^{'}b+ ab^{'} =0$. Together with (\ref{m05x}) we obtain
\begin{eqnarray} \label{m07x}     ab^{'}-a^{'} b -2a^2b +  2ab^2 =0 .
\end{eqnarray}

Put (\ref{m01x}) and (\ref{m02x}) into (\ref{m07x}) to get;
\begin{eqnarray*}
0&=   -a( 3b^2 +ab  - \frac{k}{h^2} + \frac{R}{3}  )+2(a^2 +  ab +\frac{R}{6}) b -4a^2b +  4ab^2 \\
&= a \frac{k}{h^2}   + \frac{R}{3} (b-a) +3ab(b-a). \hspace{4.5cm}
\end{eqnarray*}
Then, as $a \neq b$,
\begin{eqnarray} \label{m08x}   \frac{a}{a-b} \frac{k}{h^2}  = \frac{R}{3}  +3ab.
\end{eqnarray}

\noindent From  (\ref{m04x}) and  (\ref{m04bx}) we get

 $ \frac{f^{'}}{f}(a-b)= (a^{'}
+ a^2 - b^{'}- b^2 ) - \frac{x}{f} ( a^{'}+  a^2 + 2ab - b^{'} -  2b^2-ab +\frac{k}{h^2}  )$.

\noindent  With  (\ref{m02x}) and (\ref{m01x}), the above gives

$ 2\frac{f^{'}}{f}(a-b)=(1+\frac{x}{f}) ( b^2 -ab   -\frac{k}{h^2}  ) $.

\noindent  Then by  (\ref{m08x}), $2\frac{f^{'}}{f}a =  (1+\frac{x}{f}) (-ab -   \frac{ka}{h^2(a-b)} ) =(1+\frac{x}{f})(-4ab - \frac{R}{3})$.

\medskip
\noindent Meanwhile, (\ref{m04x}) and (\ref{m01x}) gives
  $f^{'} a = - f (ab +\frac{R}{6})-x(ab -\frac{R}{6}  ) +y(R),$ so

   $2\frac{f^{'}}{f} a = -  2(ab +\frac{R}{6})-\frac{2x}{f}(ab -\frac{R}{6}  ) + \frac{2y(R)}{f}$, which should equal $(1+\frac{x}{f})(-4ab - \frac{R}{3})$.

\noindent So we obtain
\begin{eqnarray} \label{m10x}
x(ab +\frac{R}{3}  ) + y(R)= -fab.
\end{eqnarray}

\noindent  Differentiating (\ref{m10x}) and dividing by $f$,

$ \frac{f^{'}}{f} ab  = -\frac{x}{f} (a^{'}b + ab^{'})  -  (a^{'}b + ab^{'}   )   $.

\noindent From (\ref{m04x}) we get $\frac{f^{'}}{f} a b =   (a^{'}+  a^2)b - \frac{x}{f}(a^{'}+  a^2 + 2ab )b + \frac{y b}{f}$,

Equating the above two and arranging terms, we get

  $\frac{x}{f}( -ab^{'}+ a^2b + 2ab^2 )  = 2a^{'}b+ ab^{'}+ a^2b   + \frac{y b}{f}$. Use (\ref{m10x}) to get
\begin{eqnarray} \label{m28x}
\frac{x}{f}( -ab^{'}+ a^2b + 3ab^2 +\frac{R}{3}b)  =  2a^{'}b+ ab^{'}+ a^2b  -ab^2.
\end{eqnarray}

Using (\ref{m07x}) and (\ref{m01x}), the left hand side of (\ref{m28x}) equals $ \frac{x}{f}( 6ab^2 +\frac{R}{2}b)$,
while the right hand side  equals $ -(6ab^2  +\frac{R}{2}b)$.

  So we get $(1+\frac{x}{f})( 6ab +\frac{R}{2})b = 0$. Then $( ab +\frac{R}{12})b = 0$.

\end{proof}

\begin{prop} \label{abR}
For the local metric $g = ds^2 + p(s)^2  dt^2 + h(s)^2  \tilde{g}$ with harmonic  curvature satisfying (\ref{0002bx}) with nonconstant $f$, obtained in Lemma \ref{claim112bx},
suppose that  $ ab =-\frac{R}{12}$.

Then $R=0$, $y(0)=0$ and $p$ is a constant. The metric
$g$ is locally isometric to
 a domain in the non-conformally-flat static space $( W^3   \times \mathbb{R}^1, g_W + dt^2  )$ of Example 3 in Subsection 2.1.
And $f = ch^{'}(s)- x$.

\end{prop}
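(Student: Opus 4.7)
The plan is to exploit the algebraic identity (\ref{m10x}) together with the warped-product ODEs (\ref{m01x})--(\ref{m04bx}) and the real analyticity of the data. First I substitute the hypothesis $ab=-R/12$ into (\ref{m10x}): the left-hand side becomes the constant $xR/4+y(R)$, while the right-hand side becomes $fR/12$. Since $f$ is non-constant and $R$ is constant, this forces $R=0$, and then also $y(0)=0$; in particular $ab\equiv 0$. Because $a$ and $b$ are real-analytic in $s$ (the metric and $f$ are analytic in harmonic coordinates), either $a\equiv 0$ or $b\equiv 0$ on the connected neighborhood.

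Next I rule out the branch $b\equiv 0$. If $b\equiv 0$ then $h$ is constant, so (\ref{m02x}) with $R=0$ forces $k=0$, and (\ref{m01x}) reduces to $a'+a^2=0$. Substituting $b=0,\ R=0,\ y(0)=0$ and $a'+a^2=0$ into (\ref{m04x}) gives $f'a=(f-x)(a'+a^2)=0$; non-constancy of $f$ then forces $a\equiv 0$. But Lemma \ref{112typeb1x} then yields $R_{22}=-a'-a^2-2ab=0$ and $R_{33}=k/h^2=0$, so $\lambda_2=\lambda_3$, contradicting the case hypothesis. Hence $a\equiv 0$, i.e.\ $p$ is a positive constant. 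This is the most delicate step; the rest chains together once this branch has been eliminated.

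After rescaling $t$ so that $p\equiv 1$, the metric decomposes as $g=(ds^2+h(s)^2\tilde g)+dt^2=g_W+dt^2$, where $g_W$ is a 3-dimensional warped product over a surface of constant curvature $k$. With $a=0$ and $R=0$, (\ref{m02x}) becomes $2hh''+(h')^2=k$. Differentiating in $s$ yields $h'''=-2h'h''/h$, whence $(h^2h'')'=2hh'h''+h^2h'''=0$. So $h^2h''\equiv\alpha$ for a constant $\alpha$, which is precisely Kobayashi's warped static equation (\ref{m12}) with $n=3,\ R=0$, and $(h')^2+2\alpha/h=k$ is the integrability condition (\ref{m13}). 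Thus $g_W$ is one of the 3-dimensional static spaces of Example 3 and $g$ is locally isometric to $g_W+dt^2$.

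Finally, because $R=0$ and $y(0)=0$, equation (\ref{0002bx}) reduces to $\nabla df=(f+x)\,Rc$, so $F:=f+x$ satisfies the 4-dimensional static equation on $g$. Since $f$, hence $F$, depends only on $s$, the $(E_3,E_3)$ component of this static equation, combined with $\nabla_{E_3}E_3=-(h'/h)E_1$ from Lemma \ref{112typeb1x}, yields $h'F'=Fh''$, whose solutions are $F=c\,h'(s)$ for a constant $c$. The $(E_1,E_1)$ equation $F''=-2Fh''/h$ is then automatic from the identity $h'''=-2h'h''/h$ derived above, and the $(E_2,E_2)$ equation is trivial since $R_{22}=0$. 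Therefore $f=F-x=c\,h'(s)-x$, as claimed.
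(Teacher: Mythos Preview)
Your proof is correct and follows essentially the same route as the paper: plug $ab=-R/12$ into (\ref{m10x}) to force $R=0$ and $y(0)=0$, eliminate the possibility $a\not\equiv 0$, and then identify $g_W$ via Kobayashi's equations (\ref{m12})--(\ref{m13}) and read off $f+x=c\,h'$. The only noticeable difference is in the case split: you use analyticity of $ab\equiv 0$ to branch into $a\equiv 0$ versus $b\equiv 0$ and dispose of the latter via $\lambda_2=\lambda_3$, whereas the paper observes directly from (\ref{m01x}) that $a'+a^2=0$, branches into $a=1/(s+c)$ versus $a=0$, and dispatches the first branch since (\ref{m04x}) then gives $f'a=0$ with $a\neq 0$. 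Both arguments are short and yield the same conclusion. One small point you might add: once $a\equiv 0$, the case $\alpha=0$ (i.e.\ $h''\equiv 0$) produces a flat metric, which is incompatible with $\lambda_2\neq\lambda_3$; the paper notes this explicitly to match the ``non-conformally-flat'' clause in the statement.
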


\begin{proof}  As $ ab =-\frac{R}{12}$, (\ref{m10x}) gives
$\frac{R}{4}x   + y(R)= \frac{R}{12}f.$

If $R \neq 0$, then $f$ is a constant, a contradiction to the hypothesis.  Therefore $R=0$.
Then $y(0)=0$ from (\ref{m10x}). From (\ref{m01x}), $a^{'} + a^2  =0$ and we have two cases: (i) $a = \frac{1}{s+ c}$ for a constant $c$ or (ii) $a= 0$.

\bigskip
Case (i); $a = \frac{1}{s+ c}$.

\noindent  From (\ref{m04x}),
$f^{'} a = 0,$ so $f$ is a constant, a contradiction to the hypothesis.

\bigskip
Case (ii); $a=0$, i.e. $p$ is a constant.

\noindent From (\ref{m04bx}) and  (\ref{m02x}), we get $f^{'} \frac{h^{'}}{h} = (f+x)\frac{h^{''}}{h}$.  If $ h^{'}$ vanishes, we get $\lambda_2  = \lambda_3$ a contradiction. So we may assume that $h$ is not constant. Then $ch^{'} =f+x $ for a constant $c \neq 0$.
Evaluating (\ref{0002bx}) at $(E_1, E_1)$,
\begin{eqnarray} \label{m20xx}
f^{''} = (f+x) R(E_1, E_1 ) -   \frac{R}{3} f  + y(R),
\end{eqnarray}
Here we get
 $f^{''} =   -2(f+x)\frac{h^{''}}{h}  $, so $h^{'''} =   -2h^{'}\frac{h^{''}}{h}    $.
Hence,  for a constant $\alpha$,
\begin{eqnarray} \label{m20x}
h^2 h^{''} =\alpha.
\end{eqnarray}

From (\ref{m02x}),
 $ 0=2b^{'} +  3b^2  - \frac{k}{h^2} = 2( \frac{h^{''}}{h}  ) +  (\frac{h^{'}}{h})^2  - \frac{k}{h^2} =
\frac{2\alpha}{h^3}   +  (\frac{h^{'}}{h})^2  - \frac{k}{h^2}.$
So we have
\begin{eqnarray} \label{m11x}
(h^{'})^2+  \frac{2\alpha}{h}  - k=0.
\end{eqnarray}

We have got exactly (\ref{m12}) and (\ref{m13}) in the case $R=0$ and $n=3$.
At this point we may write
$g = ds^2 + dt^2 + h(s)^2 \tilde{g}=   (k- \frac{2\alpha}{h})^{-1} dh^2 +   dt^2 + h(s)^2 \tilde{g}$.

\medskip
When $\alpha=0$,   $(h^{'})^2  = k \geq 0$. As $h$ is not constant,  $k> 0$. When $h^{'} = \pm \sqrt{k}  \neq 0$,  $h = \pm \sqrt{k} s + c_0$, for a constant $c_0$.
One can see that $g$ is a flat metric, a contradiction to $\lambda_2  = \lambda_3$.

\medskip
When $\alpha>0$, then $k >0$ from (\ref{m11x}). We set $r := \frac{h}{\sqrt{k}}$, and then  $g =    (1- \frac{2\alpha}{k \sqrt{k} r})^{-1} dr^2 +   dt^2 + r^2 \tilde{g}_{1}$, where $\tilde{g}_{1}$ is the metric of constant curvature $1$ on $S^2$.
When $\alpha<0$,   the 3-d metric $    (1- \frac{2\alpha}{k \sqrt{k} r})^{-1} dr^2  + r^2 \tilde{g}_{1}$ corresponds to the case III.1 of Kobayashi's. It is incomplete as explained in his proposition 2.4.

In these two cases of $\alpha >0$  and   $\alpha <0$, we get the same Riemannian metrics as those of static spaces $( W^3   \times \mathbb{R}^1, g_W + dt^2  )$ explained in Example 3. And $f = c h^{'} - x$.

Conversely, these metrics have harmonic curvature and satisfy (\ref{0002bx}) with the above $f$. Indeed, nontrivial components of (\ref{0002bx}) are  (\ref{m04x}), (\ref{m04bx}) and (\ref{m20xx}) whereas the harmonic curvature condition essentially consist of (\ref{m05x}) and the equation $ \nabla_{E_1} R_{33}  - \nabla_{E_3} R_{13}=0$; all these can be verified from $a=R=y(0)=0$ and $h, f$ which  satisfy (\ref{m20x}), (\ref{m11x}) and  $f = c h^{'} - x$.
\end{proof}

\begin{prop} \label{v02}
For the local metric $g = ds^2 + p(s)^2  dt^2 + h(s)^2  \tilde{g}$ with harmonic  curvature satisfying (\ref{0002bx}) with nonconstant $f$, obtained in Lemma \ref{claim112bx}, suppose that $b=0$ and that $ ab=0 \neq -\frac{R}{12}$.
Then the followings hold;

\medskip
{\rm (i)} $x \frac{R}{3}   + y(R)= 0.$

\medskip
{\rm (ii)} when $R>0$,  $g$ is locally isometric  to the Riemannian product $ \mathbb{S}^2(\frac{R}{6}) \times   \mathbb{S}^2(\frac{R}{3})  $ and
 $ f=c_1 \cos( \sqrt{\frac{R}{6}}s )  -x$  for any constant $c_1$, where $s$ is the distance on $\mathbb{S}^2(\frac{R}{6})$ from a point.

{\rm (iii)} when $R<0$,  $g$ is locally isometric to  $ (\mathbb{H}^2(\frac{R}{6}) \times   \mathbb{H}^2(\frac{R}{3}),    g_{\frac{R}{6}}  +  g_{\frac{R}{3}} )$, where $g_{\delta}$ is the 2-dimensional Riemannian metric of constant curvature $\delta$.
The metric $g_{\frac{R}{6}}$ can be written as  $g_{\frac{R}{6}} = ds^2 + p(s)^2 dt^2 $ where $p^{''}+ \frac{R}{6}p =0$ and then $f= c_2 p^{'}-x$ for any constant $c_2$.
\end{prop}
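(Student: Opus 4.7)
The plan is to take the equations $(\ref{m01x})$--$(\ref{m04bx})$ already established in the proof of Lemma \ref{v01} and specialize them to $b = 0$. Under this hypothesis, $(\ref{m02x})$ collapses to $k/h^2 = R/3$, so $h$ is constant and the factor $h(s)^2 \tilde g$ is a two-dimensional metric of constant Gaussian curvature $R/3$; locally it is $\mathbb S^2(R/3)$ when $R > 0$ and $\mathbb H^2(R/3)$ when $R < 0$. Likewise $(\ref{m04bx})$ with $b = 0$ becomes $0 = x(k/h^2) + y(R) = xR/3 + y(R)$, which is part (i).

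Next, $(\ref{m01x})$ with $b = 0$ reduces to $a' + a^2 = -R/6$; since $a = p'/p$, this is $p''/p = -R/6$, so $p$ satisfies $p'' + (R/6) p = 0$ and the metric $ds^2 + p(s)^2 dt^2$ has constant Gaussian curvature $R/6$. In case (ii), with $R > 0$, taking $s$ to be the geodesic distance from a chosen pole allows us to pick $p(s) = \sqrt{6/R}\,\sin(\sqrt{R/6}\,s)$, realizing the first factor as $\mathbb S^2(R/6)$; in case (iii), with $R < 0$, the general solution of $p'' + (R/6) p = 0$ produces the hyperbolic factor $\mathbb H^2(R/6)$ as stated. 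Combining the two factors yields the announced Riemannian product.

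To identify $f$, substitute $b = 0$ into $(\ref{m04x})$ to obtain
\[
f' a = (f - x)(a' + a^2) + y(R) = -(R/6)(f - x) + y(R),
\]
and then use (i) to simplify the right-hand side to $-(R/6)(f+x)$. Rewriting $a = p'/p$ and $-R/6 = p''/p$ converts this to $f' p' = (f+x) p''$, which is equivalent to $\bigl((f+x)/p'\bigr)' = 0$ wherever $p' \neq 0$. Hence $f + x = c\,p'$ for a constant $c$; in case (ii) this specializes to $f = c_1 \cos(\sqrt{R/6}\,s) - x$, and in case (iii) it is exactly $f = c_2\, p'(s) - x$. I expect no genuinely hard step here, since all the heavy lifting has already been done in Lemmas \ref{v01} and \ref{claim112bx}; the one point requiring care is the converse direction, namely verifying that any such $(V, g, f)$ actually has harmonic curvature and satisfies $(\ref{0002bx})$, which reduces to a direct check of $(\ref{m01x})$--$(\ref{m05x})$ together with $\nabla_{E_1} R_{33} - \nabla_{E_3} R_{13} = 0$, in the same spirit as the closing paragraph of the proof of Proposition \ref{abR}.
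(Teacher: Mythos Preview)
Your proof is correct and follows essentially the same route as the paper's: specialize the equations from Lemma~\ref{v01} at $b=0$, extract $k/h^2=R/3$ and $p''+(R/6)p=0$ for the two factors, and solve the resulting linear relation between $f$ and $p'$. The only cosmetic differences are that the paper derives part~(i) from the consequence~(\ref{m10x}) (which already packages the needed identity) rather than directly from~(\ref{m04bx}), and it solves for $f$ by writing down the explicit form of $p$ first rather than via the quotient $(f+x)/p'$; neither changes the substance.
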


\begin{proof}

\bigskip
As $b = 0$, (\ref{m10x}) gives {\rm (i)}. Next, (\ref{m02x}) gives
$   \frac{k}{h^2} = \frac{R}{3}$ and  (\ref{m01x}) gives  $a^{'} + a^2  +\frac{R}{6}=\frac{p^{''}}{p}   +\frac{R}{6}  =0$.
These and (\ref{m04x})  gives
\begin{eqnarray} \label{m25x}
f^{'} a  =- \frac{R}{6} (f + x).
\end{eqnarray}

Assume $R > 0$.  Set $r_0 =   \sqrt{\frac{R}{6}} $.    For some constants $C_1\neq 0$ and $s_0$,
 $  p = C_1 \sin( r_0(s + s_0) ) $ so that $a=  r_0 \cot( r_0(s + s_0) ) $.
Then (\ref{m25x}) and  {\rm (i)} gives $f=  c_1 \cos( r_0(s +s_0) ) -x $.
Then $g= ds^2 +  \sin^2( r_0(s+s_0) ) dt^2 + \tilde{g}_{\frac{R}{3}}$ by absorbing a constant into $dt^2$ and using $   \frac{k}{h^2} = \frac{R}{3}$.

Replacing $s + s_0$ by new $s$, we have $g= ds^2 +  \sin^2( r_0s ) dt^2 + \tilde{g}_{\frac{R}{3}}$. Here $s$ becomes the distance on $\mathbb{S}^2(\frac{R}{6})$ from a point.
And $f=   c_1 \cos( r_0s ) -x $.

\medskip

Assume $R < 0$. One can argue similarly as above  and get  $g= ds^2 +  (c_3 e^{r_1 s} + c_4 e^{-r_1 s})^2 dt^2 + \tilde{g}_{\frac{R}{3}}$   and $f=  c_5 e^{r_1 s}   + c_6 e^{-r_1 s} -x $, where $r_1 = \sqrt{-\frac{R}{6}}$ and $c_3 c_6 + c_4 c_5 =0$.

 \medskip
Conversely, the above product metrics clearly have harmonic curvature. One can check they satisfy (\ref{0002bx}). Indeed, as in the proof of Proposition \ref{abR} one may check    (\ref{m04x}), (\ref{m04bx}), (\ref{m20xx}).
\end{proof}

\section{Local 4-dimensional space with harmonic curvature}

We first treat the remaining case of $\lambda_2 = \lambda_3 =\lambda_4$
and then give the proof of Theorem \ref{local}.

 \begin{prop} \label{lcf1} Let $(M,g,f)$ be a four dimensional Riemannian manifold with harmonic curvature satisfying (\ref{0002bx}) with non constant $f$. Suppose that $\lambda_2 = \lambda_3 =\lambda_4 \neq \lambda_1$ for an adapted frame field in an open subset $U$ of $M_{r} \cap \{ \nabla f \neq 0  \}$.

\smallskip
Then for each point $p_0$ in $U$, there exists a neighborhood $V$ of $p_0$ in $U$ where  $g$ is a warped product;
\begin{equation} \label{metr}
g= ds^2 +    h(s)^2 \tilde{g},
\end{equation}
where  $h$ is a positive function and the Riemannian metric $\tilde{g}$ has constant curvature, say $k$.  In particular, $g$ is conformally flat.

As a Riemannian manifold, $(M,g)$ is locally one of Kobayashi's warped product spaces, as described in the section 2 and 3 of \cite{Ko} so that
 \begin{eqnarray} \label{handf3}
 h^{''} +   \frac{R}{12}h = ah^{-3}
 \end{eqnarray}
 for a constant $a$,
 and integrating it for some constant $k$,
\begin{eqnarray} \label{handf4}
(h^{'})^2 + \frac{2a}{n-2}h^{2-n} + \frac{R}{n(n-1)}h^2 =k.
 \end{eqnarray}

And $f$ is a non-constant solution to;
\begin{eqnarray} \label{handf4xx}
h^{'}f^{'} - f h^{''}= x (h^{''} +\frac{R}{3}h)+ y(R)h.
 \end{eqnarray}

 Conversely, any $(h,f)$ satisfying (\ref{handf3}), (\ref{handf4}) and (\ref{handf4xx}) gives rise to
 $(g,f)$ which has harmonic curvature and satisfies (\ref{0002bx}).

 \end{prop}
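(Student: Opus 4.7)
The plan is to reduce the structure equation (\ref{0002bx}) to the warped-product ODE system of Kobayashi, then invoke his classification. First, I would observe that under the hypothesis $\lambda_2 = \lambda_3 = \lambda_4 \neq \lambda_1$ the Ricci tensor has exactly two distinct eigenvalues on $U$: the simple one with eigenvector $E_1 = \nabla f/|\nabla f|$ and the triple one whose eigenspace is spanned by $E_2, E_3, E_4$ and is tangent to the regular level hypersurfaces of $f$. By Lemma \ref{abc60x}(iv) the triple eigenspace distribution is integrable with totally umbilic leaves (these leaves are precisely the level sets of $f$), and by (\ref{lambda06ax}) the principal curvatures $\zeta_2, \zeta_3, \zeta_4$ all coincide with one common function $b(s)$. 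Hence each level hypersurface is umbilic with a scalar shape operator, and combining this with Lemma \ref{threesolbx}(v) I obtain on a neighborhood $V$ of $p_0$ a pure warped-product decomposition $g = ds^2 + h(s)^2 \tilde{g}$, where $\tilde{g}$ is the pull-back to a fixed level set and $h'/h = b$.

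Next I would show that $\tilde{g}$ has constant sectional curvature. The standard warped-product Ricci formula gives, for $j>1$, a tangential component of the form $R_{jj} = -h''/h - 2(h'/h)^2 + \widetilde{R}_{jj}/h^2$, where $\widetilde{R}$ is the intrinsic Ricci of $\tilde{g}$. Since $\lambda_2 = \lambda_3 = \lambda_4$ the right-hand side is independent of $j$, so $\tilde{g}$ is Einstein; being three-dimensional it therefore has constant sectional curvature $k$. In particular $g$ is a warped product of an interval over a space form, hence conformally flat, which is the first assertion of the proposition.

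With the warped-product description in hand, I would substitute into (\ref{0002bx}) evaluated on the adapted frame. The $(E_j,E_j)$-component for $j>1$ and the $(E_1,E_1)$-component yield two scalar relations among $f, f', h, h', h''$ and the constants $x, y(R), R$. Using constancy of $R$ along level sets (Lemma \ref{abc60byx}) together with the harmonic curvature identity $\nabla_{E_1} R_{jj} = \nabla_{E_j} R_{1j}$, one obtains after one integration Kobayashi's equation $h'' + (R/12)h = a h^{-3}$, which is (\ref{handf3}); multiplying by $2h'$ and integrating gives the first integral (\ref{handf4}). Finally, eliminating $h''$ between the $(E_j,E_j)$- and $(E_1,E_1)$-components isolates $h' f' - f h''$ on one side and produces (\ref{handf4xx}). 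The main obstacle is purely bookkeeping: matching the source terms $xRc + y(R)g$ against Kobayashi's source-free system and checking that no spurious compatibility condition is hidden in the remaining off-diagonal components, but this is routine since the off-diagonal parts of $Rc$ and $g$ vanish for the warped-product frame.

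For the converse, I would take any triple $(h, \tilde{g}_k, f)$ satisfying (\ref{handf3}), (\ref{handf4}) and (\ref{handf4xx}), form $g = ds^2 + h(s)^2 \tilde{g}_k$, and verify the Codazzi condition $\nabla_k R_{ij} = \nabla_i R_{kj}$ directly; this is automatic for warped products over a constant-curvature fibre once (\ref{handf3}) holds, because then the Ricci eigenvalues depend only on $s$. Plugging $f$ into (\ref{0002bx}) on each of the four independent frame components then reproduces exactly the three ODEs and hence the equation is satisfied identically.
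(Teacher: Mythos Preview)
Your outline is correct and follows the paper's structure closely, but with one substantive difference in how (\ref{handf3}) is obtained. The paper feeds $(E_1,E_1)$ and $(E_j,E_j)$ into (\ref{0002bx}) to get the two scalar equations
\[
f'' = -3(f+x)\frac{h''}{h} - \frac{R}{3}f + y(R), \qquad h'f' - fh'' = x\Bigl(h'' + \frac{R}{3}h\Bigr) + y(R)h,
\]
then differentiates the second and substitutes the first to \emph{eliminate $f$}, obtaining $(f+x)\bigl(h''' + 3h'h''/h + \tfrac{R}{3}h'\bigr)=0$; since $f\not\equiv -x$ this gives the third-order ODE for $h$, which integrates to (\ref{handf3}). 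You instead read off (\ref{handf3}) directly from the Codazzi identity $\nabla_{E_1}R_{jj}=\nabla_{E_j}R_{1j}$, which for the warped product reduces (after using the constancy of $R$) to exactly the same third-order equation. Both routes are valid; yours is arguably cleaner because it separates the role of harmonic curvature (which alone forces (\ref{handf3})--(\ref{handf4})) from the role of (\ref{0002bx}) (which then supplies (\ref{handf4xx})). The paper's route has the advantage of making the converse immediate, since the same two scalar equations are what need to be verified.

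Two small corrections. First, (\ref{handf4xx}) is not obtained by ``eliminating $h''$'' between the two frame components; it \emph{is} the $(E_j,E_j)$-component of (\ref{0002bx}) verbatim, once you rewrite $hR_{jj}=h''+\tfrac{R}{3}h$. Second, in your converse paragraph the claim that the Codazzi condition ``is automatic \ldots\ because then the Ricci eigenvalues depend only on $s$'' is not a valid justification: the eigenvalues depend only on $s$ for any such warped product, yet Codazzi fails unless (\ref{handf3}) holds (equivalently, unless $R$ is constant). The correct statement is that (\ref{handf3}) together with (\ref{handf4}) forces constant $R$, and for a conformally flat metric constant scalar curvature is equivalent to harmonic curvature; or, more directly, (\ref{handf3}) is precisely the content of the single nontrivial Codazzi component you computed.
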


 \begin{proof} To prove that $g$ is in the form of (\ref{metr}), we may use Lemma \ref{threesolbx} {\rm (v)} and Lemma  \ref{abc60x} {\rm (iii)}, {\rm (iv)}.
For actual proof we refer to  that of Proposition 7.1 of \cite{Ki}
since the argument is almost the same as in the gradient Ricci soliton case.
 To prove that  $\tilde{g}$ has constant curvature, we use Derdzi\'{n}ski's Lemma 4  in \cite{De}.
Then (\ref{metr}) metric is conformally flat.

\medskip
 In the setting of Lemma \ref{threesolbx}, $f$ is a function of $s$ only.
For $g= ds^2 +    h(s)^2 \tilde{g}$, in a local adated frame field, we have
\begin{eqnarray} \label{handf01}
R_{11} = - 3\frac{h^{''}}{h}, \ \ \ \ \ \  R_{ii}= -\frac{h^{''}}{h} - 2 \frac{(h^{'})^2}{h^2} + 2\frac{k}{h^2}\nonumber \\
R_{ij} =0 \ \ \ {\rm for} \ \ \ \  i \neq j \hspace{4cm} \nonumber \\
R  = -6 \frac{h^{''}}{h}  - 6\frac{(h^{'})^2}{h^2}  + 6\frac{k}{h^2} \hspace{3cm}
\end{eqnarray}

 Feeding $(E_i, E_i)$, $i=1,2$ to (\ref{0002bx}) we may get;

\begin{eqnarray}
f^{''} = - 3f\frac{h^{''}}{h}- f\frac{R}{3}  - 3x\frac{h^{''}}{h} + y(R),  \label{handfbb} \\
 h^{'}f^{'} - f h^{''}= x (h^{''} +\frac{R}{3}h)+ y(R)h.  \label{handf}
 \end{eqnarray}

Differentiating (\ref{handf}) and using  (\ref{handfbb}); we get $  (f+x) \{ h^{'''}+3  \frac{h^{''}h^{'}}{h} +\frac{R}{3}h^{'} \}=0$.  As $f \neq -x$, we get  $h^{'''}+3  \frac{h^{''}h^{'}}{h} +\frac{R}{3}h^{'}=0$.
 Multiply this by $h^3$,
 we get $(h^3 h^{''} +  \frac{R}{12}h^4)^{'} =0$. Then we have (\ref{handf3}) and then (\ref{handf4}).
Kobayashi solved these completely according to each parameter and initial condition.

 One can check that any $h$ and $f$ satisfying (\ref{handf}), (\ref{handf3}) and (\ref{handf4}) satisfy  (\ref{handf01}) and (\ref{handfbb}).
\end{proof}

\bigskip
\noindent We are ready to prove Theorem \ref{local}.

\medskip
{\bf Proof of Theorem \ref{local}.}
Recall that we have already discussed the case  $ \lambda_1=\lambda_2 = \lambda_3 =\lambda_4$ in Example 1 of Subsection 2.1.2.
The conformally flat spaces in Example 1  belong to the type {\rm (iv)} of Theorem \ref{local}; especially they have $a=0$ in (\ref{handf5}) and (\ref{handf6}).

\smallskip
As the metric $g$ and $f$ are real analytic, the Ricci-eigen values $\lambda_i$'s are real analytic on $M_{r} \cap \{ \nabla f \neq 0  \}$. And $\zeta_i$'s are real analytic from (\ref{lambda06ax}).
So we can combine Proposition \ref{4dformcx}, Lemma \ref{v01},  Proposition  \ref{abR}, \ref{v02}, \ref{lcf1} and Example 1 of Subsection 2.1.2,  to  obtain a classification of 4-dimensional {\it local} spaces with harmonic curvature satisfying (\ref{0002bx}) as Theorem \ref{local}.

 \hfill $\Box$

\medskip

\begin{remark}
{\rm In the statement of Theorem \ref{local}, among the types {\rm (i)}$\sim${\rm (iv)},  there is possibly only one type of neighborhoods $V$ on a {\it connected} space $(M,g,f)$; it holds from continuity argument of Riemannian metrics.
Then one can prove that $\tilde{M} =M$ if $M$ is of type {\rm (i)}, {\rm (ii)} or {\rm (iii)}.}

\end{remark}

\section{complete 4-dimensional space with harmonic curvature}

It is not hard to describe complete spaces corresponding to {\rm (i),(ii), (iii)} of Theorem \ref{local}.

For complete conformally flat case corresponding to {\rm (iv)} of Theorem \ref{local}, we may use Theorem 3.1 of Kobayashi's classification \cite{Ko}. Then $(M,g)$  can be either $\mathbb{S}^4$, $\mathbb{H}^4$, a flat space or one of the spaces in  Example 1$\sim$5 in \cite{Ko}.
Now our task is to determine $f$, which is described by (\ref{handf4xxx}).

\medskip
We first recall the spaces in Example 3$\sim$5 in \cite{Ko}.
Any space in Example 3 and 4 in \cite{Ko} is a quotient of a warped product $\mathbb{R} \times_h N(1)$ where $h$ is a smooth periodic function on $\mathbb{R}$; recall that $N(k)$ is a Riemannian manifold of constant sectional curvature $k$. Any space in Example 5 in \cite{Ko} is a quotient of a  warped product $\mathbb{R} \times_h N(k)$ where $h$ is smooth on $\mathbb{R}$.  Here $h \geq \rho_1 >0$.

We verify the following lemma.

\begin{lemma} \label{fglo}
For any one of the spaces in Example 3, 4 and 5 in \cite{Ko},  the following holds.

\medskip
{\rm (i)} The solution $f$ to (\ref{0002bx}) can be defined and smooth  on $\mathbb{R}$.

{\rm (ii)} If $h$ is periodic and $ x\frac{R}{3}+ y(R)=0$, then  $f$ is periodic.
\end{lemma}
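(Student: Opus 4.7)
The plan is to work with the pair of ordinary differential equations (\ref{handfbb}) and (\ref{handf4xxx}) for $f$ that were derived in the proof of Proposition \ref{lcf1}, together with the ODE (\ref{handf3}) for the warping function $h$. In Examples 3, 4, 5 of \cite{Ko}, $h$ is smooth on $\mathbb{R}$ and bounded below by some $\rho_{1}>0$ (explicit for Example 5, automatic from positivity and periodicity in Examples 3 and 4), so the coefficient $h''/h$ is smooth on all of $\mathbb{R}$.

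For part (i), I view (\ref{handfbb}),
\[
f'' \;=\; -3\tfrac{h''}{h}f \;-\; \tfrac{R}{3}f \;-\; 3x\tfrac{h''}{h} \;+\; y(R),
\]
as a second-order linear ODE in $f$ with smooth coefficients on $\mathbb{R}$. Any locally defined $f$ therefore extends uniquely to a smooth solution on all of $\mathbb{R}$. What remains is to check that this extension continues to satisfy (\ref{handf4xxx}); for this I introduce
\[
\Phi(s)\;:=\;h'f' - f h'' - x\!\left(h'' + \tfrac{R}{3}h\right) - y(R)\,h,
\]
differentiate, and substitute $f''$ from (\ref{handfbb}) together with $h'''=-3\frac{h'h''}{h}-\frac{R}{3}h'$, which was noted inside the proof of Proposition \ref{lcf1}. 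The terms cancel and give $\Phi'\equiv 0$. Since $\Phi\equiv 0$ on the original local interval where $(g,f)$ solves (\ref{0002bx}), it follows that $\Phi\equiv 0$ on $\mathbb{R}$, so the extension solves both ODEs, hence satisfies (\ref{0002bx}) globally on the warped product.

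For part (ii), the hypothesis $x\frac{R}{3}+y(R)=0$ collapses the system into a cleaner linear-algebra statement. Setting $F:=f+x$, equation (\ref{handfbb}) becomes the homogeneous second-order ODE
\[
F'' \;+\; \left(3\tfrac{h''}{h} + \tfrac{R}{3}\right) F \;=\; 0,
\]
while equation (\ref{handf4xxx}) becomes $h'F'-Fh''=0$, i.e.\ the Wronskian $W(F,h')=Fh''-F'h'$ vanishes identically. Differentiating (\ref{handf3}) shows that $h'$ itself solves the same homogeneous ODE. Since $h$ is non-constant in Examples 3--5, $h'$ is a nontrivial solution, and two solutions of a linear second-order ODE whose Wronskian vanishes must be linearly dependent. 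Hence $F = C\,h'$ for a single constant $C$ on $\mathbb{R}$, so $f = C\,h' - x$. Periodicity of $h$ forces periodicity of $h'$, and therefore of $f$.

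The main obstacle is the joint consistency issue in part (i): globally extending a second-order linear ODE solution is standard, but one must verify the extension still satisfies the first-order relation (\ref{handf4xxx}). Introducing the conserved quantity $\Phi$ and showing $\Phi'\equiv 0$ by direct cancellation resolves this cleanly. Part (ii) is then essentially a Wronskian observation made possible by the algebraic simplification $y(R)=-xR/3$.
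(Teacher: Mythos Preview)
Your proof is correct and follows essentially the same route as the paper's: for (i) you extend via the linear second-order ODE (\ref{handfbb}) and then propagate the first-order constraint (\ref{handf4xxx}) by showing its derivative vanishes, exactly as the paper does (the paper writes this as ``$h'f'' - f h''' = x(h''' + \tfrac{R}{3}h') + y(R)h'$ is the derivative of (\ref{handf4xxx})'' rather than naming a conserved quantity $\Phi$, but the computation is identical). For (ii) you and the paper both arrive at $f+x = C\,h'$ from (\ref{handf4xxx}); your Wronskian justification is a slightly more careful way of reading off the same conclusion, since it handles the zeros of $h'$ cleanly rather than dividing by $h'$.
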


\begin{proof} As stated in Theorem \ref{lcf1}, any $(h,f)$ satisfying (\ref{handf3}), (\ref{handf4}) and (\ref{handf4xx}) gives rise to
 $(g,f)$ which satisfies (\ref{0002bx}). So, $(h,f)$ satisfies (\ref{handfbb}).

Choose some point $s_0$ with $h^{''}(s_0) \neq 0$. We
consider  the ODE for any constant $c$,
 \begin{eqnarray} \label{fR12}
 f^{''}  =-f( \frac{R}{12} +3ah^{-4})  +3x( \frac{R}{12} -ah^{-4}) + y(R),
 \end{eqnarray}
 with initial conditions $f^{'}(s_0) = c$ and $f(s_0) =  \frac{c h^{'}(s_0)  -\{ x (h^{''}(s_0) +\frac{R}{3}h(s_0))+ y(R)h(s_0) \}}{ h^{''}(s_0) }$ so that
 (\ref{handf4xxx}) holds at $s_0$.
 Note that (\ref{fR12}) is equivalent to (\ref{handfbb}) since $h$ satisfies (\ref{handf5}).

 As $h$ exists smoothly on $\mathbb{R}$ as a solution of (\ref{handf5}),
by global Lipschtz continuity of the right hand side of (\ref{fR12}), the solution $f$ exists globally on $\mathbb{R}$.

We get from (\ref{handf5});
 \begin{eqnarray} \label{fR12b}
 h^{'''}   = - (\frac{R}{12}  +3ah^{-4}) h^{'}.
 \end{eqnarray}

 Then by (\ref{fR12}) and (\ref{fR12b}) it satisfies
$  h^{'}f^{''}  - f h^{'''}    = x (h^{'''} +\frac{R}{3}h^{'})+ y(R)h^{'} $, which is the derivative of (\ref{handf4xxx}). So, (\ref{handf4xxx}) holds on $\mathbb{R}$.   As $h$ and $f$ satisfy (\ref{handf4xxx}),  the induced $(g,f)$ satisfies (\ref{0002bx}) on $\mathbb{R}$.

\medskip
If $ x\frac{R}{3}+ y(R)=0$, then from (\ref{handf4xxx}) we get $ f(s) =  -x+ C h^{'}(s)$ for a constant $C$, which is periodic as $h$.
\end{proof}
About Lemma \ref{fglo} {\rm (ii)}, we note that if  $ x\frac{R}{3}+ y(R)\neq 0$ and $h$ is periodic, then the periodicity of $f$ should be checked by computation.

 We are ready to state;

\begin{thm} \label{complete}

Let $(M, g)$ be a four dimensional complete Riemannian manifold with harmonic curvature, satisfying (\ref{0002bx}) with non-constant $f$.
Then it is one of the following;

\medskip

{\rm (\ref{complete}-i)}
 $(M,g)$ is isometric  to a quotient of    $ \mathbb{S}^2(\frac{R}{6}) \times   \mathbb{S}^2(\frac{R}{3})$ with $R>0$, where
And  $ f=c_1 \cos( \sqrt{\frac{R}{6}}s )  -x$  for any constant $c_1$, where $s$ is the distance on $\mathbb{S}^2(\frac{R}{6})$ from a point.  It holds that $x \frac{R}{3}   + y(R)= 0.$

\medskip
{\rm (\ref{complete}-ii)} $(M,g)$ is isometric  to a quotient of   $ (\mathbb{H}^2(\frac{R}{6}) \times   \mathbb{H}^2(\frac{R}{3}),    g_{\frac{R}{6}}  +  g_{\frac{R}{3}} )$ with $R<0$.
And $f= c_2 \cosh(s)-x$ for any constant $c_2$, where $s$ is the distance function on $ \mathbb{H}^2(\frac{R}{6})$ from a point. It holds that $x \frac{R}{3}   + y(R)= 0.$

\medskip

{\rm (\ref{complete}-iii)} $(M,g)$ is isometric  to a quotient of   one of the static spaces in Example 3 of Subsection 2.1.2, which is the Riemannian product $(\mathbb{R}^1  \times W^3   , dt^2  + ds^2 + h(s)^2 \tilde{g})$ of $\mathbb{R}^1$  and some 3-dimensional conformally flat static space $(W^3=\mathbb{R}^1 \times   \mathbb{S}^2(1)  , ds^2 + h(s)^2 \tilde{g})$ with zero scalar curvature,
which contains the space section of the Schwarzshild space-time

And $f = c \cdot h^{'}(s)- x$ for a constant $c$. It holds that $R=y(0)=0$.

\medskip
{\rm (\ref{complete}-iv)} $(M,g)$ is conformally flat. It is either $\mathbb{S}^4$, $\mathbb{H}^4$, a flat space or one of the spaces in  Example 1$\sim$5 in \cite{Ko}. Below we describe $f$ in each subcase.

\medskip

{\rm (\ref{complete}-iv-1)}  $\mathbb{S}^4(k^2)$ with the metric  $g= ds^2 + \frac{{\sin(ks)}^2}{k^2} g_{1}$, for any constant $c$,
\begin{eqnarray*} \label{fhr016}
  f(s) =  c \cdot \cos(ks) +3x+ \frac{y(12k^2)}{k^2}.
\end{eqnarray*}

\medskip
\indent {\rm (\ref{complete}-iv-2)}  $\mathbb{H}^4(-k^2)$, with $g= ds^2 + \frac{{\sinh(ks)}^2}{k^2} g_{1}$, for any constant $c$,
\begin{eqnarray*} \label{fhr017}
  f(s) = c \cdot \cosh(ks)  +3x- \frac{y(-12k^2)}{k^2}.
\end{eqnarray*}

\indent {\rm (\ref{complete}-iv-3)} A flat space,   $f = a+\sum_i  + b_i x_i + \frac{y(0)}{2} x_i^2$ in a local Euclidean coordinates $x_i$, for constants $a$ and $b_i$.

\medskip

{\rm (\ref{complete}-iv-4)} Example 1 and 2 in \cite{Ko}; the Riemannian product $(\mathbb{R} \times N(k), ds^2 + g_{k})$ or its quotient, $k \neq 0$, where $N(k)$ is 3-dimensional complete space of constant sectional curvature $k$,

 $ \ \ \ \ \ f = c_1 \sin{\sqrt{\frac{R}{3}}}s   + c_2\cos{\sqrt{\frac{R}{3}}}s -x$ when $R>0$, or

   $ \ \ \ \ \ f = c_1  \sinh{\sqrt{-\frac{R}{3}}}s +c_2 \cosh{\sqrt{-\frac{R}{3}}}s   -x$ when $R<0$.

    It holds that  $ \  x \frac{R}{3}+ y(R)=0$  and $R  = 6k$.

\medskip
{\rm (\ref{complete}-iv-5)} Example 3 and 4 in \cite{Ko};
a  warped product $\mathbb{R} \times_h N(1)$ or its quotient, where $h$ is a periodic function on $\mathbb{R}$,
$ f$ is on $\mathbb{R}$, satisfying  {\rm (\ref{handf4xxx})}.

\medskip
{\rm (\ref{complete}-iv-6)} Example 5 in \cite{Ko}; a warped product $\mathbb{R} \times_h N(k)$ where $h$ is defined on $\mathbb{R}$,
$ f$ is  on $\mathbb{R}$, satisfying  {\rm (\ref{handf4xxx})}.

\end{thm}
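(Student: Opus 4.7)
The plan is to upgrade the local classification of Theorem \ref{local} to a global one by combining the observation from the preceding Remark (on a \emph{connected} $(M,g)$ only one of the four local types can occur) with Kobayashi's global classification of conformally flat warped-product spaces in Theorem 3.1 of \cite{Ko}. Once the underlying Riemannian manifold is identified, $f$ is determined by solving the ODE (\ref{handf4xxx}), equivalently (\ref{handfbb}), globally on the universal cover.

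For local types (i), (ii), (iii) of Theorem \ref{local}, the local formulas already exhibit $g$ as a (possibly warped) Riemannian product, and by real-analyticity of the metric together with completeness, $(M,g)$ is a quotient of the universal cover stated in the theorem; the function $f$ extends globally because it is given explicitly as an elementary function of $s$ (or of $h'$). The only subtlety is case (iii): among the three alternatives $\alpha>0$, $\alpha<0$, $\alpha=0$ for the three-dimensional factor $W^3$ with $R=0$ described in Example 3 of Subsection 2.1.2, completeness rules out $\alpha<0$ (incomplete by Proposition 2.4 of \cite{Ko}) and $\alpha=0$ (flat, falling under case (iv)), leaving only the Schwarzschild slice $\mathbb{R}\times \mathbb{S}^2(1)$.

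For case (iv) I invoke Kobayashi's Theorem 3.1 in \cite{Ko} to list the possibilities, and determine $f$ in each subcase from (\ref{handfbb}). On $\mathbb{S}^4(k^2)$, using $h=\sin(ks)/k$ and $h''=-k^2 h$, (\ref{handfbb}) becomes $f''+k^2 f = 3xk^2+y(12k^2)$; the general solution $c_1\cos(ks)+c_2\sin(ks)+3x+y(12k^2)/k^2$ must be smooth as a function of $s$ alone on $\mathbb{S}^4$, and since $s$ has a critical point at each pole this forces $f'(0)=0$ and hence $c_2=0$. The case $\mathbb{H}^4(-k^2)$ is analogous, smoothness at the origin of geodesic polar coordinates killing the $\sinh$ term. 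In the flat case, (\ref{0002bx}) collapses to $\nabla df=y(0)g$, yielding the stated quadratic polynomial in Euclidean coordinates. For the products $\mathbb{R}\times N(k)$ of Examples 1 and 2 in \cite{Ko}, the warping $h$ is constant, so (\ref{handf4xxx}) degenerates to the algebraic constraint $xR/3+y(R)=0$, while (\ref{handfbb}) becomes a constant-coefficient linear ODE with the stated trigonometric or hyperbolic solutions. For Examples 3, 4, 5 in \cite{Ko}, the global existence of a smooth $f$ on $\mathbb{R}$ satisfying (\ref{handf4xxx}) is precisely Lemma \ref{fglo}(i).

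The main obstacle I foresee is the careful smoothness argument at the poles (respectively the origin of polar coordinates) in the $\mathbb{S}^4$ and $\mathbb{H}^4$ subcases: one must justify that the $\sin(ks)$ and $\sinh(ks)$ components do not extend to smooth functions of $s$ alone, via the standard fact that a smooth function of geodesic distance from a point must have vanishing radial derivative at that point. A secondary bookkeeping issue is confirming in case (iii) that the incompleteness assertions of Section 2 of \cite{Ko} really do exclude the alternatives $\alpha\le 0$, and noting that for the Example 3, 4 subcases of case (iv) one should work on the universal cover $\mathbb{R}\times_h N(1)$, since $f$ need not descend to a compact quotient when $xR/3+y(R)\neq 0$ (cf.\ the comment following Lemma \ref{fglo}).
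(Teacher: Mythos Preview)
Your proposal is correct and follows essentially the same approach as the paper: upgrade Theorem \ref{local} to a global statement via the continuity/uniqueness observation in the Remark, invoke Kobayashi's Theorem 3.1 for the conformally flat case, and determine $f$ from the ODEs (\ref{handf4xxx}) and (\ref{handfbb}), appealing to Lemma \ref{fglo} for the warped-product subcases. The paper's own proof is extremely terse (two sentences), so your version supplies exactly the details the reader would have to reconstruct --- the smoothness argument at the poles in the space-form subcases, the exclusion of $\alpha<0$ in case (iii) via incompleteness, and the explicit use of Lemma \ref{fglo}(i). One minor point: the $\alpha=0$ subcase in (iii) is in fact already excluded at the \emph{local} level in Proposition \ref{abR} (a flat metric violates $\lambda_2\neq\lambda_3$), so it does not survive into Theorem \ref{local}(iii); but this does not affect your conclusion.
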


\begin{proof}
To obtain {\rm (\ref{complete}-i)}, {\rm (\ref{complete}-ii)} and {\rm (\ref{complete}-iii)},
we use continuity argument of Riemannian metrics from Theorem \ref{local}.
To describe $f$ in the subcases of {\rm (\ref{complete}-iv)}, we use (\ref{handf4xxx})
and (\ref{handfbb}).
\end{proof}

\section{4-d Static spaces with harmonic curvature}

In this section we study static spaces, i.e. those satisfying (\ref{0002bx1}). As explained in the introduction, to study local static spaces is interesting due to Corvino's local deformation theory of scalar curvature. Here we state local classification which can be read off from Theorem \ref{local};

\begin{thm} \label{locals}
Let $(M, g, f)$ be a four dimensional  (not necessarily complete) static space with harmonic curvature and non-constant $f$.
Then for each point $p$ in some open dense subset $\tilde{M}$ of $M$, there exists a neighborhood $V$ of $p$ with one of the following properties;

\medskip

{\rm (\ref{locals}-i)}
 $(V,g)$ is isometric to a domain in   $ \mathbb{S}^2(\frac{R}{6}) \times   \mathbb{S}^2(\frac{R}{3})$ with $R>0$.
And $ f=c_1 \cos( \sqrt{\frac{R}{6}}(s+ s_0) )  $, where $s$ is the distance function on $ \mathbb{S}^2(\frac{R}{6})$ from a point and $c_1, s_0$ are constants.

\smallskip
{\rm (\ref{locals}-ii)} $(V,g)$ is isometric to a domain in $ (\mathbb{H}^2(\frac{R}{6}) \times   \mathbb{H}^2(\frac{R}{3}),    g_{\frac{R}{6}}  +  g_{\frac{R}{3}} )$ with $R<0$.
If we express $g_{\frac{R}{6}}$ as  $g_{\frac{R}{6}} = ds^2 + p(s)^2 dt^2 $ with $p^{''}+ \frac{R}{6}p =0$, then $f= c_2 p^{'}$ for any constant $c_2$.

\smallskip

{\rm (\ref{locals}-iii)} $(V,g)$ is isometric to a domain in one of the static spaces in Example 3 of Subsection 2.1.2, which is the Riemannian product $\mathbb{R}^1 \times W^3$ of $\mathbb{R}^1$ and some 3-dimensional conformally flat static space $(W^3, ds^2 + h(s)^2 \tilde{g})$ with zero scalar curvature. And $f = c h^{'}$.

\smallskip

{\rm (\ref{locals}-iv)} $(V,g)$ is conformally flat. So, it is one of the warped product metrics of the form  $ds^2 + h(s)^2 g_k$ whose existence is described  in the section 2 of \cite{Ko}. The function $h$ satisfies (\ref{handf5}) and (\ref{handf6}), and we have
$ f(s) = C h^{'}(s)$.

\end{thm}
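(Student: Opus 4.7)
The plan is to recognize this as a direct specialization of Theorem \ref{local}. The static equation (\ref{0002bx1}) is precisely (\ref{0002bx}) with the parameters $x = 0$ and $y(R) \equiv 0$. So I would apply Theorem \ref{local} to $(M,g,f)$ and then substitute $x=0$, $y(R)=0$ into each of the four conclusions, verifying that the resulting form of $f$ agrees with the statement of Theorem \ref{locals}.

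For type (i), setting $x=0$ in the formula $f = c_1 \cos(\sqrt{R/6}\, s) - x$ yields $f = c_1 \cos(\sqrt{R/6}\, s)$, which is the same family as $c_1 \cos(\sqrt{R/6}(s+s_0))$ once we allow translation of the origin along the first factor (a sine term is absorbed into the cosine by choosing $s_0$). The compatibility condition $x\frac{R}{3}+y(R)=0$ is automatic. For type (ii), $f = c_2 p' - x$ reduces to $f = c_2 p'$ with $p'' + \frac{R}{6}p = 0$. For type (iii), $f = c\, h'(s) - x$ reduces to $f = c\, h'(s)$, and $R=0=y(0)$ is automatic.

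The only case needing more than substitution is type (iv). Here (\ref{handf4xxx}) becomes the homogeneous ODE
\begin{equation*}
h' f' - f h'' = 0,
\end{equation*}
i.e.\ $(f/h')' = 0$ wherever $h' \neq 0$, so $f = C h'$ for some constant $C$. By real analyticity of $f$ and $h$ (from harmonicity of curvature and the equation (\ref{0002bx1}); cf.\ the paragraph after Lemma \ref{threesolx}), this identity extends across zeros of $h'$ as long as $h'$ does not vanish identically, so $f = C h'$ holds on the neighborhood $V$. The functions $h$ satisfying (\ref{handf5})--(\ref{handf6}) are exactly those enumerated in Section 2 of \cite{Ko}, and any pair $(h,f)$ with $f = Ch'$ of that form produces a harmonic-curvature metric satisfying (\ref{0002bx1}) by the converse direction of Proposition \ref{lcf1} applied with $x=0$, $y(R)=0$.

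I do not anticipate a genuine obstacle here: the work has already been done in Theorem \ref{local}, and the specialization is straightforward bookkeeping, with the mildest computation being the integration of the ODE for $f$ in the conformally flat case.
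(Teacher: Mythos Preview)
Your proposal is correct and follows exactly the paper's approach: the paper simply says the result ``can be read off from Theorem \ref{local},'' and you carry out that specialization explicitly with $x=0$, $y(R)\equiv 0$, including the small integration $h'f'-fh''=0 \Rightarrow f=Ch'$ in case (iv). Your added remarks on real analyticity and the converse via Proposition \ref{lcf1} are appropriate elaborations of what the paper leaves implicit.
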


For complete conformally flat case corresponding to {\rm (\ref{locals}-iv)} in Theorem \ref{locals}, if we use Theorem 3.1 of Kobayashi's classification, we get either $\mathbb{S}^4$, $\mathbb{H}^4$, a flat space or one of the spaces in  Example 1$\sim$5 in \cite{Ko}. We may obtain classification of complete four dimensional  static spaces with harmonic curvature;

\begin{thm} \label{completes}
Let $(M, g, f)$ be a complete four dimensional  static space with harmonic curvature.
Then it is one of the following;

\medskip

{\rm (\ref{completes}-i)}
 $(M,g)$ is isometric  to a quotient of  $ \mathbb{S}^2(\frac{R}{6}) \times   \mathbb{S}^2(\frac{R}{3})  $  with $R>0$. And
 $ f=c_1 \cos( \sqrt{\frac{R}{6}}s )  $, where $s$ is the distance function on $ \mathbb{S}^2(\frac{R}{6})$ from a point.

{\rm (\ref{completes}-ii)}
 $(M,g)$ is isometric  to a quotient of  $ \mathbb{H}^2(\frac{R}{6}) \times   \mathbb{H}^2(\frac{R}{3})  $ with $R<0$. And

 $f=c_2 \cosh( \sqrt{\frac{-R}{6}}s )$, where $s$ is the distance function on $ \mathbb{H}^2(\frac{R}{6})$ from a point.

\smallskip
{\rm (\ref{completes}-iii)} $(M,g)$ is isometric to a quotient of the Riemannian product $( \mathbb{R}^1  \times W^3 , \   dt^2 + \tilde{g})$, where  $(W^3, \tilde{g}) $ denotes the warped product manifold on the smooth product $\mathbb{R}^1 \times   \mathbb{S}^2(1) $ which contains the space section of the Schwarzshild space-time; see Example 3 of Subsection 2.1.2.

\smallskip
{\rm (\ref{completes}-iv)} $(M,g,f)$ is  $\mathbb{S}^4$, $\mathbb{H}^4$, a flat space or one of the spaces in  Example 1$\sim$5 in \cite{Ki}.

{\rm (\ref{completes}-v)} $g$ is a complete Ricci-flat metric with $f$ a constant function.
\end{thm}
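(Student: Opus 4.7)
The plan is to deduce Theorem \ref{completes} directly from the already-proved Theorem \ref{complete} by specializing to the static equation, which corresponds to setting $x = 0$ and $y(R) \equiv 0$ in (\ref{0002bx}). I will then separately handle the case of a nonzero constant $f$, since Theorem \ref{complete} was stated for non-constant $f$.

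\medskip

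\emph{Constant $f$ case.} Suppose $f$ is a nonzero constant. Then $\nabla df = 0$, so the static equation becomes $f\bigl(Rc - \tfrac{R}{3}g\bigr) = 0$. Since $f \ne 0$, we get $Rc = \tfrac{R}{3}g$, so $g$ is Einstein. Tracing gives $R = \tfrac{4R}{3}$, hence $R = 0$ and $g$ is Ricci-flat. This yields case (\ref{completes}-v); no completeness work is needed beyond what is assumed.

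\medskip

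\emph{Non-constant $f$ case.} I invoke Theorem \ref{complete} with $x = 0$, $y(R) \equiv 0$. The compatibility condition $x\frac{R}{3} + y(R) = 0$ appearing in cases (\ref{complete}-i) and (\ref{complete}-ii) is then automatic, and the condition $R = 0 = y(0)$ in case (\ref{complete}-iii) reduces to $R = 0$. The formulas for $f$ simplify as follows. In case (\ref{complete}-i) one has $f = c_1\cos(\sqrt{R/6}\,s) - x = c_1\cos(\sqrt{R/6}\,s)$, which is (\ref{completes}-i). In case (\ref{complete}-iii), $f = c\,h'(s) - x = c\,h'(s)$, which is (\ref{completes}-iii). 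In case (\ref{complete}-iv), the complete conformally flat static spaces are exactly those in Kobayashi's classification (Theorem 3.1 of \cite{Ko}), which yields (\ref{completes}-iv) after using (\ref{handf4xxx}) with $x = y = 0$ to pin down $f$ in each subcase.

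\medskip

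The one step that requires a small additional computation is case (\ref{complete}-ii) $\Rightarrow$ (\ref{completes}-ii): Theorem \ref{complete} gives $f = c_2 p' - x = c_2 p'$ with $p'' + \tfrac{R}{6}p = 0$, $R < 0$. To put this in the claimed global form, pick a point $q \in \mathbb{H}^2(R/6)$ and use geodesic polar coordinates $(s,t)$ centered at $q$, in which $g_{R/6} = ds^2 + \bigl(\sinh(\sqrt{-R/6}\,s)/\sqrt{-R/6}\bigr)^{2}dt^{2}$. Thus $p(s) = \sinh(\sqrt{-R/6}\,s)/\sqrt{-R/6}$ and $p'(s) = \cosh(\sqrt{-R/6}\,s)$, giving $f = c_2\cosh(\sqrt{-R/6}\,s)$ with $s$ the distance function, as claimed.

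\medskip

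There is really no technical obstacle: all the heavy lifting was done in Theorem \ref{complete} and in the quoted results from \cite{Ko}. The only mildly delicate point is making sure that the constant-$f$ solution truly belongs to a separate class (case (v)) rather than being absorbed into (i)--(iv); this is why I isolate it at the very start. The remainder of the proof is the bookkeeping sketched above.
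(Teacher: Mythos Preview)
Your proposal is correct and follows essentially the same approach as the paper: the paper's proof simply states ``It follows from Theorem \ref{complete}. When $f$ is nonzero constant, $g$ is clearly Ricci-flat. So we get (v).'' Your write-up is a more detailed version of precisely this argument, specializing $x=0$, $y(R)\equiv 0$ in each case of Theorem \ref{complete} and handling the constant-$f$ case separately.
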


\begin{proof}
It follows from Theorem \ref{complete}. When $f$ is nonzero constant, $g$ is clearly Ricci-flat. So we get {\rm (v)}.
\end{proof}

\medskip
Fischer-Marsden \cite{FM2} made the conjecture that any closed  static space is Einstein. But it was disproved by conformally flat examples
in \cite{La, Ko}. Now we ask

{\bf Question 1}: Does there exist a closed static space which does not have harmonic curvature?

\medskip
The space in {\rm (\ref{completes}-iii)} of Theorem \ref{completes} has  three distinct Ricci-eigenvalues.
We only know examples of static spaces with at most three distinct Ricci-eigenvalues. So, we ask;

\medskip
{\bf Question 2}: Does there exist a static space with more than three distinct Ricci-eigenvalues?
Is there a limit on the number of  distinct Ricci-eigenvalues for a static space?

\section{Miao-Tam critical metrics and $V$-critical spaces}

In this section we treat Miao-Tam critical metrics.
These metrics originate from \cite{MT2} where Miao and Tam studied
the critical points of the volume functional on the space $\mathcal{M}_{\gamma}^{K}$ of  metrics with constant scalar curvature $K$ on a compact manifold $M$ with a prescribed metric $\gamma$ at the boundary of $M$.
Miao-Tam critical metrics are precisely described \cite{MT} in case they are Einstein or conformally flat.

Here we first describe 4-d metrics with harmonic curvature which have a nonzero solution $f$ to (\ref{0002bx3}).
We do not assume the condition  $f_{| \Sigma} =0$ but still can show that any such metric must be conformally flat;

\begin{thm} \label{localv}
Let $(M, g)$ be a four dimensional  (not necessarily complete) Riemannian manifold with harmonic curvature, satisfying {\rm (\ref{0002bx3})} with non-constant $f$.
Then $(M,g)$ is conformally flat. It is one of the warped product metrics of the form  $ds^2 + h(s)^2 g_k$ whose existence is described  in the section 2 of \cite{Ko}. The function $h$ satisfies (\ref{handf5}) and (\ref{handf6}), and $f$ satisfies
$ h^{'}f^{'} - f h^{''}= - \frac{h}{n-1}.$

\end{thm}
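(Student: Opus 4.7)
The equation (\ref{0002bx3}) is precisely the special instance of (\ref{0002bx}) with $x=0$ and $y(R)=-\frac{1}{n-1}=-\frac{1}{3}$ (since $n=4$). The plan is to apply Theorem \ref{local} directly, eliminate three of the four local models using the algebraic constraints attached to them, and then propagate conformal flatness from the open dense subset $\tilde M$ to all of $M$ via real analyticity.

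The first step is to exclude types (i)--(iii) of Theorem \ref{local}. Types (i) and (ii) demand the compatibility $x\frac{R}{3}+y(R)=0$; with $x=0$ and $y(R)=-\frac{1}{3}$ this reads $-\frac{1}{3}=0$, a contradiction. Type (iii) demands $R=0$ together with $y(0)=0$; since $y(0)=-\frac{1}{3}\neq 0$, this case is also ruled out. Consequently every point $p\in\tilde M$ must admit a neighborhood of type (iv), that is, a Kobayashi warped product $ds^2+h(s)^2 g_k$ with $h$ satisfying (\ref{handf5})--(\ref{handf6}) and $f$ satisfying (\ref{handf4xxx}).

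To promote conformal flatness from $\tilde M$ to $M$, I would invoke the real analyticity of any metric with harmonic curvature in harmonic coordinates (recalled just after Lemma \ref{threesolx}). The Weyl tensor is then a real analytic tensor field on $M$; since it vanishes on the dense subset $\tilde M$, it vanishes identically on $M$. Thus $(M,g)$ is globally conformally flat, and Kobayashi's local description \cite{Ko} applies on a neighborhood of every point, giving the warped form $ds^2+h(s)^2 g_k$ together with (\ref{handf5})--(\ref{handf6}).

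Finally, substituting $x=0$ and $y(R)=-\frac{1}{n-1}$ into the ODE (\ref{handf4xxx}) gives
\[
h'f'-fh''=-\frac{h}{n-1},
\]
which is exactly the ODE asserted for $f$. I do not expect a serious obstacle: the algebraic compatibility check is immediate, the ODE specialization is direct, and the only mildly delicate point, the extension of conformal flatness from $\tilde M$ to all of $M$, is handled by the analyticity remark already available in the paper.
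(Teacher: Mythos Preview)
Your proposal is correct and follows essentially the same route as the paper: apply Theorem \ref{local}, rule out types (i)--(iii) via the incompatibilities $x\frac{R}{3}+y(R)=0$ and $y(0)=0$ against $x=0$, $y(R)=-\frac{1}{3}$, and read off the warped-product form and the ODE from type (iv). Your explicit invocation of real analyticity to propagate the vanishing of the Weyl tensor from $\tilde M$ to all of $M$ is a point the paper leaves implicit, so your write-up is in fact slightly more complete.
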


\begin{proof}
The proof is immediate from Theorem \ref{local};  the cases (i)-(ii) of  Theorem \ref{local} require
$x \frac{R}{3} + y(R) =0$ and (iii) requires $ y(0) =0$, which contradict to the condition  $x=0$ and $y(R) = -\frac{1}{3}$ that (\ref{0002bx3}) has. The description of Theorem \ref{local}
(iv) holds for $g$ and $f$ of Theorem \ref{localv}, and in particular $g$ is conformally flat.
\end{proof}
Theorem \ref{localv} shows an advantage of our local approach over \cite{BDR} in analyzing (\ref{0002bx3}). In fact,
the integration argument of \cite[Lemma 5]{BDR} only works for compact manifolds, but our analysis can resolve local solutions.

From Theorem \ref{locals} and \ref{localv} we can classify local 4-d $V$-static spaces with harmonic curvature;

\begin{thm} \label{localsxx}
Let $(M, g, f)$ be a four dimensional  (not necessarily complete) $V$-static space with harmonic curvature and non-constant $f$.
Then for each point $p$ in some open dense subset $\tilde{M}$ of $M$, there exists a neighborhood $V$ of $p$ with one of the following properties;

\medskip

{\rm (\ref{localsxx}-i)}
 $(V,g)$ is isometric to a domain in   $ \mathbb{S}^2(\frac{R}{6}) \times   \mathbb{S}^2(\frac{R}{3})$ with $R>0$.
And $ f=c_1 \cos( \sqrt{\frac{R}{6}}(s+ s_0) )  $, where $s$ is the distance function on $ \mathbb{S}^2(\frac{R}{6})$ from a point and $c_1, s_0$ are constants.

\smallskip
{\rm (\ref{localsxx}-ii)} $(V,g)$ is isometric to a domain in $ (\mathbb{H}^2(\frac{R}{6}) \times   \mathbb{H}^2(\frac{R}{3}),    g_{\frac{R}{6}}  +  g_{\frac{R}{3}} )$ with $R<0$.
If we express $g_{\frac{R}{6}}$ as  $g_{\frac{R}{6}} = ds^2 + p(s)^2 dt^2 $ with $p^{''}+ \frac{R}{6}p =0$, then $f= c_2 p^{'}$ for any constant $c_2$.

\smallskip
{\rm (\ref{localsxx}-iii)} $(V,g)$ is isometric to a domain in one of the static spaces in Example 3 of Subsection 2.1.2, which is the Riemannian product $\mathbb{R}^1 \times W^3$ of $\mathbb{R}^1$ and some 3-dimensional conformally flat static space $(W^3, ds^2 + h(s)^2 \tilde{g})$ with zero scalar curvature. And $f = c h^{'}$ for any constant $c$.

\smallskip

{\rm (\ref{localsxx}-iv)} $(V,g)$ is conformally flat. It is one of the warped product metrics of the form  $ds^2 + h(s)^2 g_k$ whose existence is described  in the section 2 of \cite{Ko}. The function $h$ satisfies (\ref{handf5}) and (\ref{handf6}), and we have
$ f(s) = c h^{'}(s)$ for any constant $c$.

\smallskip

{\rm (\ref{localsxx}-v)} $(V,g)$ is conformally flat. It is one of the warped product metrics of the form  $ds^2 + h(s)^2 g_k$ whose existence is described  in the section 2 of \cite{Ko}. The function $h$ satisfies (\ref{handf5}) and (\ref{handf6}) and $f$ is any constant multiple of a solution $f_0$ satisfying $ h^{'}f_0^{'} - f_0 h^{''}= - \frac{h}{n-1}.$
\end{thm}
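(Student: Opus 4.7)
The plan is to deduce Theorem \ref{localsxx} directly from the already-established Theorem \ref{local}, by specializing the parameters. The $V$-static equation (\ref{0002bx3f}) is exactly (\ref{0002bx}) with
\[
x=0,\qquad y(R)=-\frac{c}{n-1},
\]
so every 4-dimensional $V$-static space with harmonic curvature and nonconstant $f$ falls into one of the four local types (i)--(iv) listed in Theorem \ref{local}. My task reduces to checking which of those types are compatible with this specialization of $(x,y)$, and writing down what $f$ becomes in each compatible case.

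For types (i) and (ii) of Theorem \ref{local} the compatibility condition is $x\frac{R}{3}+y(R)=0$, which here forces $y(R)=0$, hence $c=0$. For type (iii) the condition $y(0)=0$ likewise forces $c=0$. In all three cases the $V$-static equation therefore collapses to the static equation (\ref{0002bx1}), and cases (\ref{localsxx}-i)--(\ref{localsxx}-iii) follow by reading off the corresponding items of Theorem \ref{locals} (setting $x=0$ in the formulas for $f$). So these three cases require essentially no new work.

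The only substantive case is the conformally flat type (iv) of Theorem \ref{local}. Here $g=ds^2+h(s)^2 g_k$ with $h$ a solution of (\ref{handf5})--(\ref{handf6}), and no extra constraint on $(x,y(R))$ is imposed, so both $c=0$ and $c\ne 0$ are possible. The governing ODE (\ref{handf4xxx}) for $f$ specializes to
\[
h' f' - f h'' = -\frac{c}{n-1}\,h,
\]
which is linear (indeed first order) in $f$. Its general solution decomposes as a homogeneous part, which is exactly $f=Ch'$ and produces case (\ref{localsxx}-iv) (matching the conformally flat static $f$ from Theorem \ref{locals}), plus a particular inhomogeneous solution $f_0$ satisfying $h'f_0'-f_0 h'' = -\frac{h}{n-1}$, from which any scalar multiple yields a $V$-static $f$ with $c\ne 0$ and gives case (\ref{localsxx}-v) (matching Theorem \ref{localv}).

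The main---indeed the only---point to check is that each $(g,f)$ produced above actually satisfies (\ref{0002bx3f}) locally; but this is immediate since (\ref{0002bx3f}) is by construction the special case of (\ref{0002bx}) to which Theorems \ref{local}, \ref{locals}, and \ref{localv} have already been applied. There is no real analytic obstacle; the argument is a direct specialization of the classification already achieved, with the only conceptual subtlety being the linearity of the $f$-ODE in the conformally flat case, which is precisely what splits type (iv) of Theorem \ref{local} into the two disjoint possibilities (\ref{localsxx}-iv) and (\ref{localsxx}-v).
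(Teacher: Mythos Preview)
Your proposal is correct and follows essentially the same approach as the paper: the paper simply states that Theorem~\ref{localsxx} is obtained by combining Theorem~\ref{locals} (the static case $c=0$) with Theorem~\ref{localv} (the Miao--Tam case $c\neq 0$, after scaling), exactly as you do by specializing $x=0$, $y(R)=-\frac{c}{n-1}$ in Theorem~\ref{local} and splitting on whether $c=0$ or not. Your observation that the constraints $x\frac{R}{3}+y(R)=0$ and $y(0)=0$ force $c=0$ in types (i)--(iii), together with the linear splitting of the $f$-ODE in the conformally flat type, is precisely the content the paper leaves implicit.
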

Note that the last equation in {\rm (\ref{localsxx}-v)} comes from (\ref{0002bx3f}), which allows any constant multiple of one solution.

\bigskip
As a Corollary of Theorem \ref{localv},
we could state an extension of  Miao-Tam's theorem 1.2 in \cite{MT} to the case of harmonic curvature. Instead we choose to state
the following version, which is a twin to the corollary 1 of \cite{BDR}.

\begin{thm} \label{compbdry}
If $(M^4, g, f )$ is a simply connected, compact Miao-Tam critical metric of harmonic curvature with boundary isometric to a standard sphere $S^3$. Then $(M^4, g)$ is isometric to a geodesic ball in a simply connected space form $\mathbb{R}^4, \mathbb{H}^4$ or $\mathbb{S}^4$.
\end{thm}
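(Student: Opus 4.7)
The plan is to reduce the problem to Miao--Tam's earlier classification of conformally flat critical metrics by invoking Theorem \ref{localv}, and then quote that classification directly. First I would observe that a Miao--Tam critical metric satisfies (\ref{0002bx3}), which is exactly (\ref{0002bx}) with $x=0$ and $y(R)=-\tfrac{1}{n-1}=-\tfrac{1}{3}$. Since $f$ vanishes on $\partial M$ but is not identically zero, $f$ cannot be a (nonzero) constant, so the hypothesis of Theorem \ref{localv} that $f$ is non-constant is automatic.

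Next I would apply Theorem \ref{localv} to conclude that $(M^4,g)$ is conformally flat on all of $M$. If one prefers to argue via Theorem \ref{local} directly, the elimination of cases (i)--(iii) is immediate from the parameter values $x=0$, $y(R)=-\tfrac{1}{3}$: cases (i) and (ii) require $x\tfrac{R}{3}+y(R)=0$, which with $x=0$ forces $y(R)=0$, contradicting $y(R)=-\tfrac{1}{3}$; case (iii) requires $y(0)=0$, which is the same contradiction. Hence only case (iv), the conformally flat warped-product family of Kobayashi, is possible, and since conformal flatness is a local property encoded by the vanishing of the Weyl tensor on the open dense subset $\tilde M$, it extends by continuity to all of $M$.

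Having established that $(M^4,g)$ is a simply connected, compact, conformally flat Miao--Tam critical metric whose boundary is isometric to the round $S^3$, I would then invoke Miao--Tam's Theorem 1.1 of \cite{MT}, which classifies precisely such metrics as geodesic balls in one of the simply connected space forms $\mathbb{R}^n$, $\mathbb{H}^n$, or $\mathbb{S}^n$. Specializing to $n=4$ yields the desired conclusion.

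The proof is essentially routine once Theorem \ref{localv} is in hand: all the serious work is hidden inside Theorem \ref{local} (or equivalently Theorem \ref{localv}), whose proof occupies most of the paper. The only point requiring care is to make sure one can cite Miao--Tam's theorem globally rather than just locally, but this is immediate because Theorem \ref{localv} already delivers global conformal flatness; there is no further obstacle in deducing Theorem \ref{compbdry}.
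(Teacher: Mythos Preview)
Your proof is correct and follows exactly the approach the paper intends: Theorem \ref{localv} forces conformal flatness, after which one quotes Miao--Tam's classification of conformally flat critical metrics from \cite{MT}. One minor correction: the relevant result in \cite{MT} is Theorem~1.2 (the conformally flat case with round-sphere boundary), not Theorem~1.1, and indeed the paper itself refers to Theorem~1.2 when introducing Theorem~\ref{compbdry}.
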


One can also make classification statements of complete spaces with harmonic curvature satisfying (\ref{0002bx3}) or (\ref{0002bx3f}). We omit them.

\medskip
Theorem \ref{localv} gives a speculation that it might hold in general dimension.
So, we ask;

\medskip
{\bf Question 3}: Let $(M, g)$ be an $n$-dimensional Miao-Tam critical metric with harmonic curvature. Is it conformally flat?

\bigskip
It is also interesting to find examples of non-conformally flat Miao-Tam critical metric in any dimension.

\section{On critical point metrics}

In this section we study a critical point metric, i.e. a Riemannian metric $g$ on a manifold $M$ which admits a non-zero solution $f$ to  (\ref{0002bx2}).
  According to \cite{HCY}, these critical point metrics with harmonic curvature on closed manifolds in any dimension are Einstein.

On a closed manifold, by taking the trace of this equation,  $R$ must be positive and $f$ satisfies $\int_M f \  dv=0$.
Here $M$ is not necessarily closed and $g$ may have non-positive scalar curvature.
From Theorem \ref{local}, we can easily obtain the next theorem.

\begin{thm} \label{localc}
Let $(M, g)$ be a four dimensional  (not necessarily complete) Riemannian manifold with harmonic curvature, satisfying (\ref{0002bx2}) with non-constant $f$. Then one of the following holds.

\medskip
{\rm (\ref{localc}-i)}  $(M,g)$ is locally isometric to a domain in one of the static spaces of Example 3 in Subsection 2.1.2 below, which is the Riemannian product $(\mathbb{R}^1  \times W^3   , dt^2  +  ds^2 + h(s)^2 \tilde{g})$ of $\mathbb{R}^1$  and a 3-dimensional conformally flat static space $(W^3, ds^2 + h(s)^2 \tilde{g})$ with zero scalar curvature. And $f = c \cdot h^{'}(s)- 1$.

\medskip
{\rm (\ref{localc}-ii)} $(M,g)$ is conformally flat and is locally one of the metrics whose existence is described  in the section 2 of \cite{Ko}; $g= ds^2 + h(s)^2 g_k$ where
$h$ and $f$ satisfy (\ref{handf5}), (\ref{handf6}) and (\ref{handf4xxx}).

\end{thm}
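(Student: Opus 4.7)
The plan is to reduce to Theorem \ref{local}. Equation (\ref{0002bx2}) is exactly the specialization of (\ref{0002bx}) obtained by taking $x=1$ and $y(R)=-\frac{R}{n}$. In dimension $n=4$ this is $y(R)=-\frac{R}{4}$, so the constant $y(R)$ is a linear function of the (a priori unknown but constant) scalar curvature, and in particular $y(0)=0$.

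Since $(M,g,f)$ is a four-dimensional harmonic-curvature manifold with non-constant $f$ satisfying (\ref{0002bx}) with these particular values of $x$ and $y$, Theorem \ref{local} applies: around each point of an open dense subset, $(V,g,f)$ falls into one of the four model classes (i)--(iv). I then go down the list and discard the ones that are incompatible with $x=1$, $y(R)=-\frac{R}{4}$:
\begin{itemize}
\item Cases (i) and (ii) of Theorem \ref{local} impose the compatibility condition $x\frac{R}{3}+y(R)=0$. With our values this reads $\frac{R}{3}-\frac{R}{4}=\frac{R}{12}=0$, forcing $R=0$; but these cases require $R>0$ and $R<0$ respectively, so both are ruled out.
\item Case (iii) requires $R=0$ and $y(0)=0$. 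The second is automatic and the first is simply the scalar-curvature value for this case. Substituting $x=1$ into the formula $f=c\cdot h'(s)-x$ from Theorem \ref{local}(iii) gives exactly the expression for $f$ in (\ref{localc}-i).
\item Case (iv) places no obstruction: $h$ still satisfies (\ref{handf5}) and (\ref{handf6}) unchanged, while the equation (\ref{handf4xxx}) for $f$ specializes (with $x=1$, $y(R)=-\frac{R}{4}$) to $h'f'-fh''=h''+\frac{R}{12}h$, which is the form of (\ref{handf4xxx}) for these parameters. This gives (\ref{localc}-ii).
\end{itemize}

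There is essentially no obstacle here beyond the careful bookkeeping of substituting the constants; the real work was already done in establishing Theorem \ref{local}. The only point that merits a line of comment is that in case (iii) the constant $R=0$ is a conclusion of Theorem \ref{local} and not an extra assumption, so the condition $y(0)=0$ (rather than $y(R)=0$ for all $R$) is all that is needed, and this is automatic from $y(R)=-R/4$. The converse direction (that every pair $(h,f)$ as in (\ref{localc}-i) or (\ref{localc}-ii) produces a critical point metric) follows from the converse clause of Theorem \ref{local} applied with the same values of $x$ and $y(R)$.
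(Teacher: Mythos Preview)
Your proposal is correct and follows essentially the same approach as the paper: apply Theorem \ref{local} with the parameters $x=1$, $y(R)=-\frac{R}{4}$ coming from (\ref{0002bx2}), and eliminate cases (i) and (ii) via the incompatibility of $x\frac{R}{3}+y(R)=0$ with $R\neq 0$. The paper's own proof is terser (it records only the elimination of (i) and (ii)), while you spell out the survival of cases (iii) and (iv) and the converse direction, but the substance is the same.
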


\begin{proof}
 We have $x \frac{R}{3}   + y(R)= 0$ and $R \neq 0$ in the  cases {\rm (i), (ii)} of Theorem \ref{local}. This is not compatible with (\ref{0002bx2}).
\end{proof}

Complete spaces with harmonic curvature which admit a solution $f$ to  (\ref{0002bx2})  are described in the next theorem. We obtain non-conformally-flat examples with zero scalar curvature in  {\rm (\ref{completec}-i)}, which is in contrast to the above result of  \cite{HCY} for closed manifolds. The case {\rm (\ref{completec}-v)} is also noteworthy;
it is conformally flat with positive scalar curvature and the metric $g$ can exist on a compact quotient but the function $f$
can survive on the universal cover $\mathbb{R} \times_h N(1)$.

\begin{thm} \label{completec}

Let $(M, g)$ be a four dimensional complete Riemannian manifold with harmonic curvature, satisfying (\ref{0002bx2}) with non-constant $f$. Then $(M,g)$ is one of the following;

\medskip
{\rm (\ref{completec}-i)}  $(M,g)$ is isometric  to a quotient of   one of the static spaces of Example 3 in Subsection 2.1.2 below, which is the Riemannian product $(\mathbb{R}^1  \times W^3   , dt^2  +  ds^2 + h(s)^2 \tilde{g})$ of $\mathbb{R}^1$  and a 3-dimensional conformally flat static space $(W^3, ds^2 + h(s)^2 \tilde{g})$ with zero scalar curvature
which contains the space section of the Schwarzshild space-time.
And $f = c \cdot h^{'}(s)- 1$ for a constant $c$.

{\rm (\ref{completec}-ii)} $\mathbb{S}^4(k^2)$ with the metric  $g= ds^2 + \frac{{\sin^2(ks)}}{k^2} g_{1}$, with  $ f(s) = c \cdot \cos(ks).$

\indent {\rm (\ref{completec}-iii)} $\mathbb{H}^4(-k^2)$, with $g= ds^2 + \frac{{\sinh(ks)}^2}{k^2} g_{1}$, with $f(s) =  c \cdot  \cosh(ks).$

{\rm (\ref{completec}-iv)} A flat space,   $f = a+ \sum_i  b_i x_i $ in a local Euclidean coordinates $x_i$ and constants $a, b_i$.

\smallskip
{\rm (\ref{completec}-v)}  Example 3 in \cite{Ko};
A  warped product $\mathbb{R} \times_h N(1)$ where $h$ is a periodic function on $\mathbb{R}$,
$ f$ is smooth on $\mathbb{R}$ but is not periodic. Here $R>0$.

{\rm (\ref{completec}-vi)} Example 5 in \cite{Ko}; A warped product $\mathbb{R} \times_h N(k)$ where $h$ is defined on $\mathbb{R}$,
$ f$ is is smooth on $\mathbb{R}$. Here $R \leq 0$.

\end{thm}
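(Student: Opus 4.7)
The plan is to invoke Theorem \ref{complete} with the parameter values dictated by the critical point metric equation (\ref{0002bx2}) in dimension $n=4$, namely $x=1$ and $y(R)=-R/n=-R/4$, and then specialize each of its cases. The two identities driving the analysis are $x\frac{R}{3}+y(R)=\frac{R}{12}$ and $y(0)=0$.

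First I would eliminate the incompatible cases. Cases (\ref{complete}-i) and (\ref{complete}-ii) both carry the requirement $x\frac{R}{3}+y(R)=0$, which forces $R=0$ and contradicts their sign conventions $R>0$ and $R<0$; they are therefore ruled out. Similarly, the Riemannian product subcase (\ref{complete}-iv-4) needs $x\frac{R}{3}+y(R)=0$ together with $R=6k\neq 0$, which is impossible. Case (\ref{complete}-iii), on the other hand, asks precisely for $R=0$ and $y(0)=0$, both of which now hold automatically, and it delivers the Schwarzschild-type product in (\ref{completec}-i) with $f=ch'(s)-1$.

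For the surviving conformally flat subcases I would specialize the explicit $f$-formulas in Theorem \ref{complete} by direct substitution. In (\ref{complete}-iv-1) and (\ref{complete}-iv-2), using $y(\pm 12k^2)=\mp 3k^2$ and $x=1$, the constant terms $3x+\frac{y(12k^2)}{k^2}$ and $3x-\frac{y(-12k^2)}{k^2}$ both collapse to zero, producing the clean trigonometric and hyperbolic expressions of (\ref{completec}-ii) and (\ref{completec}-iii). In the flat subcase (\ref{complete}-iv-3), the quadratic term $\frac{y(0)}{2}x_i^2$ drops out and $f$ reduces to an affine function, which is (\ref{completec}-iv). The warped-product subcases (\ref{complete}-iv-5) and (\ref{complete}-iv-6), corresponding to Kobayashi's Examples 3, 4 and 5, then yield (\ref{completec}-v) and (\ref{completec}-vi); one has only to recall from \cite{Ko} that Examples 3, 4 live in the $R>0$ regime (warping over $N(1)$ with periodic $h$) while Example 5 lies in $R\leq 0$ (warping over $N(k)$ with $h\geq \rho_1>0$).

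The main obstacle I anticipate is justifying the non-periodicity of $f$ in (\ref{completec}-v), since here $h$ is periodic. Lemma \ref{fglo}(ii) gives the implication in the opposite direction, but to rule out periodicity of $f$ I would argue directly from the ODE (\ref{fR12}): under $x=1$, $y(R)=-R/4$ its inhomogeneity picks up the nonzero constant $3x\cdot\frac{R}{12}+y(R)=\frac{R}{12}\neq 0$, so averaging the equation over one period of $h$ produces a nontrivial secular growth term that no periodic solution can accommodate. This single calculation, combined with the subcase-by-subcase substitutions above, pins down every item of the theorem.
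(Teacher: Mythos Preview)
Your overall strategy---running through the list in Theorem \ref{complete} with $x=1$, $y(R)=-R/4$---is exactly the paper's approach, and your eliminations of cases (\ref{complete}-i), (\ref{complete}-ii), (\ref{complete}-iv-4) as well as the specializations in (\ref{complete}-iv-1)--(\ref{complete}-iv-3) are correct.

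The gap is in your treatment of (\ref{completec}-v). Your arithmetic slips: with $x=1$ and $y(R)=-R/4$ one has
\[
3x\cdot\tfrac{R}{12}+y(R)=\tfrac{R}{4}-\tfrac{R}{4}=0,
\]
not $\tfrac{R}{12}$. Substituting into (\ref{fR12}) the constant part of the inhomogeneity therefore vanishes, and the equation reduces to
\[
f'' + \Bigl(\tfrac{R}{12}+3ah^{-4}\Bigr)f = -3ah^{-4},
\]
a Hill-type equation with periodic coefficient and periodic forcing. Averaging this over a period yields no immediate contradiction: the right-hand side has, in general, nonzero mean, but so does the term $\bigl(\tfrac{R}{12}+3ah^{-4}\bigr)f$ for a putative periodic $f$, and there is no obvious reason these cannot balance. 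Ruling out periodic solutions here is a genuine Floquet-theoretic question, not a one-line averaging argument.

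The paper does not attempt this computation either; it simply invokes section E.2 of Lafontaine \cite{La} for the non-periodicity of $f$ in this regime. You should either reproduce that argument or cite it, since your proposed secular-growth mechanism, as written, does not go through.
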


\begin{proof} We may check the list in Theorem \ref{complete}. The spaces of {\rm (\ref{complete}-i)} and {\rm (\ref{complete}-ii)} in Theorem \ref{complete} are excluded as in the proof of Theorem \ref{localc}.
The space for {\rm (\ref{complete}-iv-4)} of Theorem \ref{complete},  where $R \neq 0$, dose not satisfy the equation $h^{'}f^{'} - f h^{''}= x (h^{''} +\frac{R}{3}h)+ y(R)h $; when $x=1$, $y(R)=-\frac{R}{4}$  and $h=1$, it reduces to $0=\frac{R}{12}$.

On the space of {\rm (\ref{complete}-iv-5)}  in Theorem \ref{complete},  $f$ is defined and smooth on $\mathbb{R}$ by Lemma \ref{fglo} (i).
As $ x\frac{R}{3} + y(R) \neq 0 $,  Lemma \ref{fglo} (ii) does not apply. According to the section E.2 of \cite{La}, $f$ cannot be periodic. This yields {\rm (\ref{completec}-v)}.

\end{proof}

\end{document}